\declaretheorem[name=Definition,style=definition,qed=$\dashv$,
numberwithin=section]{dfn}
\declaretheorem[name=Fact,style=plain,sibling=dfn]{fact}
\declaretheorem[name=Theorem,style=plain,sibling=dfn]{tm}
\declaretheorem[name=Lemma,style=plain,sibling=dfn]{lem}
\declaretheorem[name=Remark,style=definition,sibling=dfn]{rem}
\declaretheorem[name=Claim,style=plain]{clm}
\declaretheorem[name=Claim,style=plain,numbered=no]{clm*}
\declaretheorem[name=Sublaim,style=plain,numbered=no]{sclm*}
\declaretheorem[name=Sublaim,style=plain,sibling=dfn]{sclm}
\declaretheorem[name=Case,style=definition]{case}
\declaretheorem[name=Subcase,style=definition]{scase}
\declaretheorem[name=Stage,style=definition,numbered=no]{stage*}
\newcommand{\Sat}{\mathrm{Sat}}
\newcommand{\cone}{\mathrm{c}}
\newcommand{\rung}{\theta}
\newcommand{\RR}{\mathbb R}
\newcommand{\PP}{\mathbb P}
\newcommand{\BB}{\mathbb B}
\newcommand{\sub}{\subseteq}
\newcommand{\cross}{\times}
\newcommand{\all}{\forall}
\newcommand{\om}{\omega}
\newcommand{\OR}{\mathrm{OR}}
\newcommand{\Hull}{\mathrm{Hull}}
\newcommand{\cut}{\backslash}
\newcommand{\Tt}{\mathcal{T}}
\newcommand{\Ss}{\mathcal{S}}
\newcommand{\Uu}{\mathcal{U}}
\newcommand{\rg}{\mathrm{rg}}
\newcommand{\ins}{\trianglelefteq}
\newcommand{\pins}{\triangleleft}
\newcommand{\npins}{\ntriangleleft}
\newcommand{\crit}{\mathrm{cr}}
\newcommand{\rest}{\!\upharpoonright\!}
\newcommand{\com}{\circ}
\newcommand{\lh}{\mathrm{lh}}
\newcommand{\sats}{\models}
\newcommand{\J}{\SS}
\newcommand{\AD}{\mathrm{AD}}
\newcommand{\DC}{\mathrm{DC}}
\newcommand{\HC}{\mathrm{HC}}
\newcommand{\ZFC}{\mathrm{ZFC}}
\newcommand{\ZF}{\mathrm{ZF}}
\newcommand{\Coll}{\mathrm{Col}}
\newcommand{\es}{\mathbb{E}}
\newcommand{\Pgap}{\Pg}
\newcommand{\ld}{\mathrm{ld}}
\newcommand{\core}{\mathfrak{C}}
\newcommand{\pred}{\mathrm{pred}}
\newcommand{\id}{\mathrm{id}}
\newcommand{\conc}{\ \widehat{\ }\ }
\newcommand{\forces}{\dststile{}{}}
\newcommand{\rSigma}{\mathrm{r}\Sigma}
\newcommand{\mSigma}{\mathrm{m}\Sigma}
\DeclareMathOperator{\Th}{Th}
\DeclareMathOperator{\card}{card}
\DeclareMathOperator{\cof}{cof}
\DeclareMathOperator{\wfp}{wfp}
\DeclareMathOperator{\rank}{rank}
\newcommand{\OD}{\mathrm{OD}}
\newcommand{\bfrSigma}{\utilde{\rSigma}}
\newcommand{\cHull}{\mathrm{cHull}}
\newcommand{\M}{\mathcal{M}}
\newcommand{\tu}{\textup}
\newcommand{\stack}{\mathrm{stack}}
\newcommand{\Lim}{\mathrm{Lim}}
\newcommand{\Lp}{\mathrm{Lp}}
\newcommand{\alphagap}{{\alpha_g}}
\newcommand{\betagap}{{\beta_g}}
\newcommand{\Gammagap}{{\Gamma_g}}
\newcommand{\WO}{\mathrm{WO}}
\newcommand{\dropset}{\mathscr{D}}
\newcommand{\vareps}{\varepsilon}
\newcommand{\Pg}{{P_{\mathrm{g}}}}
 \newcommand{\Jj}{\mathcal{J}}
\renewcommand{\SS}{\mathbb{S}}
\newcommand{\wh}{\widehat}
\newcommand{\ladder}{\mathrm{ld}}
\newcommand{\wt}{\widetilde}
\begin{document}
\title{Ladder mice}
\author{Farmer Schlutzenberg\footnote{
This research was funded by the Austrian Science Fund (FWF) [10.55776/Y1498].
}\\
farmer.schlutzenberg@tuwien.ac.at\\
TU Wien}

\maketitle

\begin{abstract}
Assume ZF + AD + $V=L(\RR)$. We prove some ``mouse set'' theorems, for definability over $\Jj_\alpha(\RR)$ where $[\alpha,\alpha]$ is a projective-like gap (of $L(\RR)$) and $\alpha$ is either a successor ordinal or has countable cofinality, but  $\alpha\neq\beta+1$ where $\beta$ ends a strong gap.
For such ordinals $\alpha$ and  integers $n\geq 1$,
 we show that there is a mouse $M$ with $\RR\cap M=\OD_{\alpha n}$.

The proof involves an analysis of ladder mice
and their generalizations to $\Jj_\alpha(\RR)$.
This analysis is related to earlier work of Rudominer, Woodin and Steel on ladder mice.
However, it also yields a new proof of the mouse set theorem even at the least point where ladder mice arise -- one which avoids the stationary tower. The analysis also yields a corresponding ``anti-correctness'' result on a cone, generalizing facts  familiar in the projective hierarchy;  for example, that $(\Pi^1_3)^V\rest M_1$ truth is $(\Sigma^1_3)^{M_1}$-definable and $(\Sigma^1_3)^{M_1}$ truth is $(\Pi^1_3)^V\rest M_1$-definable.

We also define and study versions   of ladder mice on a cone at the end of weak gap, and at the successor of the end of a strong gap,
and an anti-correctness result on a cone there.
\end{abstract}

\tableofcontents

\section{Introduction}
Assume ZF + $\AD^{L(\RR)}$, so by \cite{kechrisADDCLR}, DC holds. By a result of Woodin \cite[Theorem 1.5]{twms}, if $\alpha$ is a limit ordinal\footnote{We use Jensen's $\Ss$-hierarchy, not his $\Jj$-hierarchy, throughout; $\Jj_\alpha(\RR)=\Ss_{\alpha\om}(\RR)$ for all ordinals $\alpha$. This diverges from \cite{twms}. An \emph{$\Ss$-gap} is just an interval $[\gamma\om,\delta\om]$ such that $[\gamma,\delta]$ is a gap. In \cite[Theorem 1.3]{twms},
it appears that $\alpha$ is assumed to be a limit ordinal in order to avoid the possibility that $\Jj_\alpha(\RR)$ ends a strong gap.
Note that because of this assumption, even in the case of \cite[Theorem 1.5]{twms} that $\mathfrak{P}_\alpha(\RR)\sub L(\RR)$,
\cite[Theorem 1.5]{twms} is more general than \cite[Theorem 1.3]{twms}. And note that $\mathfrak{P}_\alpha(\RR)$ is closed under complement.} which starts an $\Ss$-gap $[\alpha,\beta]$ and $\alpha$ is not of form $\gamma+\om$ where $\gamma$ ends a strong $\Ss$-gap, then $\OD^{<\alpha}$ is a mouse set. Here $\OD^{<\alpha}=\OD^{<\alpha}(\emptyset)$, where $\OD^{<\alpha}(x)$ is the set of reals $y$ such that for some ordinal $\beta<\alpha$ which is either a limit or $0$, $y$ is definable over $\Ss_\beta(\RR)$
from countable ordinals in the codes;
that is, there is some $\xi<\om_1$ and some formula $\varphi$ such that for all $w\in\WO_\xi$ (that is, $w$ is a wellorder of $\om$ in ordertype $\xi$) and all reals $z$, we have
\begin{equation}\label{eqn:def_y} z=y\iff\Ss_\beta(\RR)\sats\varphi(x,z,w).\end{equation}And a \emph{mouse set}
is the set of reals of some  $(0,\om_1+1)$-iterable premouse $M$.

Rudominer conjectured in \cite[Conjecture 2.24]{rudo_mouse_sets}
that a sharper fact holds, proved certain instances of this conjecture, and posed related questions \cite[p.~18]{rudo_mouse_sets}.
 Steel also refers to such a conjecture in \cite[prior to Theorem 1.3]{twms};
 see also the introduction of \cite{rudo_steel}. Given an ordinal $\beta$ which is either a limit or $0$,  $n\in[1,\om)$
and $x\in\RR$, let $\OD_{\beta n}(x)$ denote the set of reals $y$ such that for some $\Sigma_n$ formula (of the $L(\RR)$ language) and some $\xi<\om_1$,
for all $w\in\WO_\xi$ and
all reals $z$, line (\ref{eqn:def_y}) holds.
Write $\OD_{\beta,n}=\OD_{\beta,n}(\emptyset)$.

Define the pointclasses $\Sigma^\RR_n$ by setting $\Sigma^\RR_1=\Sigma_1$
(in the $L(\RR)$ language),
$\Pi^\RR_n=\neg\Sigma^\RR_n$,
and $\Sigma^\RR_{n+1}=\exists^\RR\Pi^\RR_n$.
Let $\OD^\RR_{\beta n}(x)$ denote the set of reals $y$ such that for some $\Sigma^\RR_n$ formula and some $\xi<\om_1$,
for all $w\in\WO_\xi$ and all reals $z$,
line (\ref{eqn:def_y}) holds.\footnote{Ignoring the difference in indexing, in Rudominer's notation, $A_{(\beta n)}=\OD^\RR_{\beta,n+1}$,
at least assuming $\AD^{L(\RR)}$, as we are.
Note that we have defined $\OD^\RR_{\beta,n+1}$ slightly differently to Rudominer's definition of $A_{(\beta,n)}$, because
we have required directly by definition that $y$ is defined from each $w\in\WO_\xi$, uniformly in $w$, whereas this uniformity is not part of Rudominer's definition; but cf.~\cite[p.~18]{rudo_mouse_sets}, where it is shown that this makes no difference.}
Rudominer conjectured that if $[\beta,\beta]$ is a projective-like $\Ss$-gap which is not the successor of a strong $\Ss$-gap then $\OD^\RR_{\beta,n+1}$
is a mouse set, for each $n\geq 0$.
He also asked  \cite[p.~18]{rudo_mouse_sets}, for $[\beta,\beta]$ a projective-like $\Ss$-gap and $n\geq 1$,
whether $\OD_{\beta,n+1}=\OD^{\RR}_{\beta,n+1}$ (the equality holds when $n=0$ directly by definition).

In this paper we consider these questions in the case that $\beta$ is an ordinal such that $\cof^{L(\RR)}(\beta)=\om$
and $[\beta,\beta]$ is a projective-like $\Ss$-gap which is not the successor of a strong $\Ss$-gap.\footnote{Recall that since $\beta$ is a limit ordinal,
$\Ss_\beta(\RR)=\Jj_{\gamma}(\RR)$
where $\gamma\om=\beta$,
and $\gamma$ could be a successor ordinal here.}
Rudominer established only partial results on this case in \cite{rudo_mouse_sets} (along with further such results in other cases).
Results of Rudominer from \cite{rudo_mouse_sets},
combined with more recent work of Woodin \cite{rudo_ladder},
established that $\OD_{\omega 2}=\OD^\RR_{\omega 2}$ is a mouse set. Adapting the methods of these papers,
combined with some from \cite{twms} and \cite{gaps_as_derived_models}, we will show the following:

\begin{tm}\label{tm:lightface_mouse_set}
Assume $\ZF+\AD^{L(\RR)}$.
Let $[\beta,\beta]$
be an $\Ss$-gap of $L(\RR)$
such that $\cof^{L(\RR)}=\om$
and $\beta$ is not of the form $\gamma+\om$ where $\gamma$ ends a strong $\Ss$-gap. Let $n\in[1,\om)$. Then:
\begin{enumerate}[label=--]
 \item $\OD_{\beta n}=\OD^\RR_{\beta n}$, and
 \item there is an $\om$-small $(0,\om_1+1)$-iterable premouse $M$ with
 $\OD_{\beta n}=\RR\cap M$.
\end{enumerate}
\end{tm}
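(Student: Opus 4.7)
The plan is to build an $\om$-small $(0,\om_1+1)$-iterable ``ladder mouse'' $M$ at the gap $[\beta,\beta]$ and deduce both conclusions of the theorem from the chain
\[\OD_{\beta n}\sub\RR\cap M\sub\OD^\RR_{\beta n}\sub\OD_{\beta n},\]
in which the last inclusion is immediate, since every $\Sigma^\RR_n$-formula is a $\Sigma_n$-formula in the $L(\RR)$-language. The first and second inclusions are the ``completeness'' and ``anti-correctness'' halves of the ladder mouse analysis. The essential leverage on the hypothesis $\cof^{L(\RR)}(\beta)=\om$ is that $\Ss_\beta(\RR)$ is approximated along a cofinal $\om$-sequence of rungs, both in constructing the mouse and in evaluating its theory.

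To construct $M$, I would fix a definable cofinal sequence $\langle\beta_k:k<\om\rangle$ of limits cofinal in $\beta$. At each rung $\beta_k$ the ladder mouse analysis, as developed in the paper extending Rudominer, Woodin and Steel, should produce an iterable premouse capturing the $\Sigma_n$-theory of $\Ss_{\beta_k}(\RR)$ from countable ordinals, together with a scale on the associated pointclass. The rungs are stacked using Steel's scale analysis valid inside a projective-like gap and techniques from \cite{twms} and \cite{gaps_as_derived_models}; the hypothesis that $\beta$ is not of form $\gamma+\om$ where $\gamma$ ends a strong gap is precisely what is needed so that the stacking does not ``fall off the end'' of such a gap. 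The completeness inclusion $\OD_{\beta n}\sub\RR\cap M$ then follows by direct encoding: any $\OD_{\beta n}$ real is computed from a $\Sigma_n$ formula, a countable ordinal parameter, and parameters that can be read off the ladder inside $M$.

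The technical heart, and main obstacle, is the anti-correctness inclusion $\RR\cap M\sub\OD^\RR_{\beta n}$: every real of $M$ must have a $\Sigma^\RR_n$-definition from a countable ordinal. The strategy is to express ``$y\in M$'' as a $\Sigma^\RR_n$ statement over $\Ss_\beta(\RR)$ via iterability of $M$, in the spirit of the familiar projective fact that $(\Sigma^1_3)^{M_1}$-truth is $(\Pi^1_3)^V\rest M_1$-definable. The complexity of the iteration strategy of $M$ must be calibrated to match $\Sigma^\RR_n$ at the target gap exactly, and --- crucially, to obtain even the least-point case afresh --- the argument must avoid invoking the stationary tower, proceeding instead directly through the ladder structure. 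Once anti-correctness is in place the chain closes, yielding both the equality $\OD_{\beta n}=\OD^\RR_{\beta n}$ and the mouse set conclusion; $\om$-smallness of $M$ is maintained because the ladder construction controls the number of Woodin cardinals in $M$ in terms of the complexity level $n$ and the position of the gap.
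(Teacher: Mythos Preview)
Your proposal has a genuine structural gap at the very first step. You write ``fix a definable cofinal sequence $\langle\beta_k:k<\om\rangle$ of limits cofinal in $\beta$'', but there is no reason such a sequence should be lightface definable over $\Ss_\beta(\RR)$; the hypothesis only gives $\cof^{L(\RR)}(\beta)=\om$, which guarantees a $\beta$-ascending pair $(x_{\cof},\varphi_{\cof})$ \emph{relative to a real parameter} $x_{\cof}$. Consequently the ladder mouse one can build directly is $N=M^\Gamma_{\ladder}(x_{\cof})$, a premouse \emph{over} $x_{\cof}$, and its reals lie only in $\OD^\RR_{\beta n}(x_{\cof})$, not in $\OD^\RR_{\beta n}$. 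Your chain $\OD_{\beta n}\sub\RR\cap M\sub\OD^\RR_{\beta n}$ therefore does not close.

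The paper resolves this by a two-stage construction you do not mention: first build $N=M^\Gamma_{\ladder}(x_{\cof})$ (or $M^\Gamma_{m+1,\ladder}(x_{\cof})$ for $n=m+3$), then let $M$ be the output of the Q-local local $K^c$-construction of $N$ \emph{over $\emptyset$}. This $M$ is a genuine $\om$-premouse, inheriting iterability from $N$ via lifting/resurrection, and the key technical claim (Claim~\ref{clm:no_dropping_ladder}) is that no dropping iterate of $M$ can produce a $\Gamma$-ladder. That claim is what drives the anti-correctness inclusion $\RR\cap M\sub\OD^\RR_{\beta 2}$: each projecting segment of $M$ is the unique $\Pi_1^{\alphagap}$-iterable premouse of its height.

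The completeness direction $\OD_{\beta 2}\sub M$ is also more delicate than ``direct encoding''. The paper splits into cases according to whether some collapse extension of a non-dropping iterate of $M$ contains an $(\alphagap,\RR)$-cofinal real. If none does, one shows $\OD_{\alphagap 2}=\OD_{\alphagap 1}$; if one does, one must formulate and use a new notion of $(\all^\RR\neg\psi(x),\varrho)$-\emph{prewitness} (a tree of nested partial ladders) which replaces the stationary-tower argument. Your proposal alludes to ``avoiding the stationary tower'' but gives no mechanism for doing so; this prewitness machinery is the substantive content that is missing.
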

(This ``mouse set'' result was already known for $n=1$, and in fact then $\OD_{\beta 1}=\OD^\RR_{\beta 1}=\OD_{<\beta}$. In \cite{rudo_ladder}, the result $\beta=\om$ and $n=2$. His proof easily extends to certain other such values of $\beta$ and $n=2$, but doesn't appear to work for all of them.)

Note that the first conclusion stated in the theorem -- that $\OD_{\beta n}=\OD^\RR_{\beta n}$ -- is a purely descriptive fact,
with no mention of mice in the hypothesis or conclusion. But this equality
will be established by finding a mouse $M$ with $\OD_{\beta n}\sub \RR\cap M\sub \OD^\RR_{\beta n}$.

Central to our analysis is a well known  phenomenon, which we call \emph{anti-correctness}.
Recall that given any $\Pi^1_1$ formula $\varphi$, there is a $\Sigma_1$ formula $\psi_{\Sigma_1,\varphi}$
(of the language of set theory)
and also a $\Sigma^1_1$ formula $\psi_{\Sigma^1_1,\varphi}$
such that for all reals $x\in L_{\om_1^{\mathrm{ck}}}$, we have
\[ \varphi(x)\iff L_{\om_1^{\mathrm{ck}}}\sats\psi_{\Sigma_1,\varphi}(x)\iff L_{\om_1^{\mathrm{ck}}}\sats\psi_{\Sigma^1_1,\varphi}(x),\]
and the natural maps $\varphi\mapsto\psi_\varphi$
and $\varphi\mapsto\varrho_\varphi$
are recursive.
(Given $\psi_{\Sigma_1,\varphi}$,
take $\psi_{\Sigma^1_1,\varphi}(x)$
to assert that there is a real which codes a model $M$ for the language of set theory such that $M$ is $\om$-wellfounded and $M\sats$ ``$V=L\wedge \psi_{\Sigma_1,\varphi}(x)$''. This works by Ville's Lemma.)
Secondly, given any $\Pi^1_1$ formula $\varphi$, there is a $\Sigma^1_1$ formula $\varrho_{\Sigma^1_1,\varphi}$
such that for all reals $x\in L_{\om_1^{\mathrm{ck}}}$, we have
\[ L_{\om_1^{\mathrm{ck}}}\sats\varphi(x)\iff \varrho_{\Sigma^1_1,\varphi}(x).\]
(Take $\varrho_{\Sigma^1_1,\varphi}(x)$
to assert that there is an $\om$-wellfounded model $M$ for the language of set theory which models KP + ``$\varphi(x)$ + $V=L$ + there is no ordinal $\alpha$ such that $L_\alpha\sats$ KP''.
Again,  this works by Ville's Lemma.) This is ``anti-correctness'', since not only is $L_{\om_1^{\mathrm{ck}}}$ not $\Sigma^1_1$-correct\footnote{For a pointclass $\Gamma$, a model $M$ is  \emph{$\Gamma$-correct} iff  $M\sats\varphi(x)\iff V\sats\varphi(x)$ for all
formulas $\varphi$ in $\Gamma$  and reals $x\in M$.}, but $(\Pi^1_1)^V\rest (\RR\cap L_{\om_1^{\mathrm{ck}}})$
is $(\Sigma^1_1)^{L_{\om_1^{\mathrm{ck}}}}$,
and $(\Pi^1_1)^{L_{\om_1^{\mathrm{ck}}}}$
is $(\Sigma^1_1)^V$.

The analogue holds two real quantifiers higher. Given any $\Pi^1_3$ formula $\varphi$, there is a $\Sigma_1$ formula $\psi_{\Sigma_1,\varphi}$ in the language of passive premice, and also a $\Sigma^1_3$ formula $\psi_{\Sigma^1_3,\varphi}$,
such that for all reals $x\in M_1$, we have
\[ \varphi(x)\iff M_1\sats\psi_{\Sigma_1,\varphi}(x)\iff M_1\sats\psi_{\Sigma^1_3,\varphi}(x).\]
(The formula $\psi_{\Sigma_1,\varphi}(x)$ asserts that there is some proper segment of $M_1$ which is a $\varphi(x)$-prewitness,
where
a \emph{$\varphi(x)$-prewitness}
is a premouse $N$ with $x\in N$
and such that there is $\delta\in\OR^N$ such that $N\sats\ZF^-$ + ``$\delta$ is Woodin
and the extender algebra $\BB_\delta$
at $\delta$ forces $\varphi(x)$''.
And the formula $\psi_{\Sigma^1_3,\varphi}(x)$ asserts
``there is a countable $1$-small $\Pi^1_2$-iterable $\varphi(x)$-prewitness''. This works by the analogue of Ville's Lemma,
by which if $N$ is $\Pi^1_2$-iterable but $N$ is not iterable, then $M_1|\om_1^{M_1}\ins N$.)
Secondly, given any $\Pi^1_3$ formula $\varphi$,
there is a $\Sigma^1_3$ formula 
$\varrho_{\Sigma^1_3,\varphi}$
such that for all reals $x\in M_1$, we have
\[ M_1\sats\varphi(x)\iff \varrho_{\Sigma^1_3,\varphi}(x).\]
(Take $\varrho_{\Sigma^1_3,\varphi}(x)$
to assert that there is a countable, $\Pi^1_2$-iterable premouse $N$ which is a putative $M_1^\#$ and $N\sats\varphi(x)$. Again, this works by the analogue of Ville's Lemma.)

This generalizes in a direct manner to  $\Pi^1_{2n+1},\Sigma^1_{2n+1}$ and the mouse $M_{2n-1}$, for all integers $n>1$.

Now the first pointclass beyond projective which is analogous
to $\Pi^1_{2n+1}$, is $\Pi_2^{\Ss_{\om}(\RR)}$
(here $\Ss_{\om}(\RR)=\Jj(\RR)$ is the rud-closure of $\RR\cup\{\RR\}$). Our understanding of this pointclass (and its dual, $\Sigma_2^{\Ss_\om(\RR)}$)
was advanced significantly
with Woodin's proof of the mouse set theorem at this level, via his analysis of ladder mice in
 \cite[\S\S1--5]{rudo_ladder}.
Woodin's analysis was extended by Steel in his unpublished notes \cite{steel_ladder} (part of which forms  \cite[\S7]{rudo_ladder}, but involves also more not included in \cite{rudo_ladder}). It seems to the author that anti-correctness at this level was intended to be conjectured in \cite[\S7]{rudo_ladder}, but what is written there is actually something else.\footnote{The version for $\Sigma^1_3,\Pi^1_3$ stated in \cite{rudo_ladder} is also different to anti-correctness. This all refers to the arXiv v2 preprint version of the paper.}
We establish anti-correctness at this level of $L(\RR)$,
and also other levels. Recall that $M_{\ladder}$
is the minimal \emph{ladder mouse}
(see \cite{rudo_ladder}),
and $M_{\ladder}$ is to $\Pi_2^{\Ss_\om(\RR)}$ as $L_{\om_1^{\mathrm{ck}}}$ is to $\Pi^1_1$ and $M_1$ is to $\Pi^1_3$:

\begin{tm}
There are recursive maps $\varphi\mapsto\psi_{\Sigma_2,\varphi}$ and $\varphi\mapsto\varrho_{\Sigma_2,\varphi}$
 such that for all $\Pi_2$ formulas $\varphi$,
 $\psi_{\Sigma_2,\varphi}$ and $\varrho_{\Sigma_2,\varphi}$ are $\Sigma_2$ formulas
 \tu{(}where the formulas are in the  $L(\RR)$ language\footnote{Recall that this is the language of set theory augmented with a constant symbol $\dot{\RR}$, which is interpreted as the set of reals of the structure.} and the complexity is in this sense\tu{)}
 and for all reals $x\in M_{\ladder}$, we have:
 \begin{enumerate}
 \item $\Ss_\om(\RR)\sats\varphi(x)\iff (\Ss_\om(\RR))^{M_{\ladder}}\sats\psi_{\Sigma_2,\varphi}(x)$, and

 \item $(\Ss_\om(\RR))^{M_{\ladder}}\sats\varphi(x)\iff \Ss_\om(\RR)\sats\varrho_{\Sigma_2,\varphi}(x)$.
 \end{enumerate}
\end{tm}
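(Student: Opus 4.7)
The plan is to adapt the $\Pi^1_3$/$M_1$ template spelled out just above the theorem, with the extender algebra $\BB_\delta$ replaced by the ``ladder forcing'' associated with the ladder structure of $M_{\ladder}$. First I would isolate a notion of $\varphi(x)$-\emph{prewitness} for a $\Pi_2$ formula $\varphi$: a premouse $N$ with $x\in N$ carrying an ``$\om$-ladder'' (an $\om$-sequence of Woodins with the auxiliary superstructure that would make $N$ a proper ladder-mouse initial segment of $M_{\ladder}$), together with the associated ladder forcing $\BB_N\in N$, satisfying $N\sats$ ``$\BB_N$ forces $\Ss_\om(\dot\RR)\sats\varphi(\check x)$''. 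The forcing/generic-absoluteness facts about $\BB_N$ are to be read off of the analysis in \cite{rudo_ladder} and \cite{steel_ladder} as extended in this paper.

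Next I would define $\psi_{\Sigma_2,\varphi}(x)$ to assert ``there is a countable, weakly iterable $\varphi(x)$-prewitness'', and $\varrho_{\Sigma_2,\varphi}(x)$ to assert ``there is a countable, weakly iterable putative $M_{\ladder}$ $N$ with $x\in N$ and $(\Ss_\om(\RR))^N\sats\varphi(x)$''. Here ``weakly iterable'' is the ladder-mouse analog of $\Pi^1_2$-iterability at the $M_1$ level: iterability with respect to countable trees via the natural $\Pi_1^{\Ss_\om(\RR)}$-definable partial strategy. Both assertions are then $\Sigma_2$ over $\Ss_\om(\RR)$, because quantification over countable structures is $\Sigma_1$, the prewitness/putative-$M_{\ladder}$ conditions on $N$ are first-order over $N$, and weak iterability is $\Pi_1^{\Ss_\om(\RR)}$.

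For the two ``forward'' implications, given $\Ss_\om(\RR)\sats\varphi(x)$ an appropriate proper segment of $M_{\ladder}$ (one just past a Woodin in its ladder) is a $\varphi(x)$-prewitness, which witnesses $\psi_{\Sigma_2,\varphi}(x)$ inside $(\Ss_\om(\RR))^{M_{\ladder}}$, using that $M_{\ladder}$ captures $\Pi_2^{\Ss_\om(\RR)}$-truth via its ladder. Given $(\Ss_\om(\RR))^{M_{\ladder}}\sats\varphi(x)$, take $N=M_{\ladder}$ as the witness for $\varrho_{\Sigma_2,\varphi}(x)$, using that $M_{\ladder}$ is countable in $V$. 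The two ``backward'' implications require the analog of Ville's Lemma for ladder mice: any countable weakly iterable putative ladder mouse (resp.\ $\varphi(x)$-prewitness) which fails to be fully iterable must contain $M_{\ladder}|\om_1^{M_{\ladder}}$ (or its relevant proper segment) as an initial segment. Granted this, in the $\psi$ case one compares $N$ with $M_{\ladder}$ and reads off $\Ss_\om(\RR)\sats\varphi(x)$ by ladder generic absoluteness; in the $\varrho$ case $N$ and $M_{\ladder}$ agree on the truth of $(\Ss_\om(\dot\RR))^{\cdot}\sats\varphi(x)$.

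The main obstacle is the Ville's-Lemma analog for ladder mice: one must simultaneously identify the correct low-complexity notion of ``weak iterability'' (so that $\psi$ and $\varrho$ come out $\Sigma_2$, no higher) and show that any weakly-but-not-fully-iterable structure of the relevant shape is genuinely ``larger'' than the minimal fully iterable one. This requires the comparison and premouse-degree machinery at the $\Ss_\om(\RR)$ level together with the $\Pi_2^{\Ss_\om(\RR)}$ generic-absoluteness properties of the ladder forcing, and these are exactly the ingredients that the main body of the paper is set up to develop.
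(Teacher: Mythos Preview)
Your high-level architecture (prewitness + $\Pi_1$-iterability + Ville-type comparison lemma) matches the paper, and your formula $\varrho_{\Sigma_2,\varphi}$ is essentially what the paper uses. But your notion of $\varphi(x)$-prewitness is where the real content lies, and your proposal there has a genuine gap.

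You define a prewitness as a premouse $N$ carrying ``an $\om$-sequence of Woodins'' and a ``ladder forcing $\BB_N$'' such that $N\sats$``$\BB_N$ forces $\Ss_\om(\dot\RR)\sats\varphi(\check x)$''. There are two problems. First, if $N$ already carries a full $\om$-ladder, then by minimality of $M_{\ladder}$, $N$ cannot be a proper segment of $M_{\ladder}$ projecting to $\om$; so your forward implication (``an appropriate proper segment of $M_{\ladder}$, one just past a Woodin, is a $\varphi(x)$-prewitness'') does not make sense in your own framework. Second, there is no single well-defined ``ladder forcing'' $\BB_N$ with the generic absoluteness you need; Woodin's original argument used the stationary tower, not a forcing living in $N$, and the paper explicitly aims to avoid that route.

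The paper's prewitness (Definition~\ref{dfn:varphi(x)-prewitness_tree_version}) is quite different: it is a pair $(N,\Delta)$ where $N$ is only a \emph{$0$-partial} ladder (one rung), and $\Delta\in N$ is a tree whose nodes are nested $(n{+}1)$-partial ladders $N_{n+1}\pins N_n$ together with length-$(n{+}1)$ elements of the scale tree $\wt{T}^{N_{n+1}}_{\neg\varphi,x}$, required to cover all one-step extensions in $\wt{T}$. The equivalence (Lemma~\ref{lem:equivalence_coarse_countable_cof}) goes through wellfoundedness of the direct-limit scale tree $\wt{T}^M_{\neg\varphi,x}$: the forward direction is an induction on rank in that tree, and the backward direction iterates $N$ to make a purported witness $w$ to $\neg\varphi(x)$ generic rung-by-rung, with the $\Delta$-structure forcing a drop at each step, yielding an infinite descending chain of models. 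No forcing statement of the form ``$\BB_N$ forces $\varphi$'' appears; the certificate is purely combinatorial, built from the very-good scales on the $\Pi_1$ sets $A_n$. This is the technical innovation you are missing.
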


The formula $\psi_{\Sigma_2,\varphi}(x)$ will assert ``there is a $\Pi_1$-iterable $\varphi(x)$-prewitness'', where we will formulate the appropriate notion of \emph{$\varphi(x)$-prewitness}
in Definition \ref{dfn:varphi(x)-prewitness_tree_version} below, and ``$\Pi_1$-iterability'' is recalled in Definition \ref{dfn:Pi_1-iterable}. The definition of $\Pi_1$-iterability is standard,
but the notion of \emph{$\varphi(x)$-prewitness} seems to be new.

The instance of the mouse set theorem for $\OD_{\omega 2}$,
which was established by Woodin's result, is also a corollary of the theorem above. The proof does not depend on Woodin's result,
and our proof -- although related to Woodin's -- does not make any mention of stationary tower forcing,
whereas this seemed to be an important component of Woodin's proof.

The proof actually goes through more generally, and is hardly different in the more general context, so we give it there.

\begin{dfn}\label{dfn:beta-ascending}
 We say that $\beta\in\OR$ is  \emph{$\om$-standard}
 iff $[\beta,\beta]$
 is a projective-like $\Ss$-gap
 of $L(\RR)$ (so $\beta$ is a limit ordinal),
  $\cof^{L(\RR)}(\beta)=\om$,
 and $\beta$ is not of form $\gamma+\om$ where $\gamma$ ends a strong $\Ss$-gap of $L(\RR)$.
 
 Let $\beta$ be $\om$-standard.
 A pair $(x_{\cof},\varphi_{\cof})$
 is called \emph{$\beta$-ascending}
 iff $x_{\cof}\in\RR$, $\varphi_{\cof}$ is a $\Sigma_1$ formula of the $L(\RR)$  language,
 \[ \Ss_{\beta}(\RR)\sats\all^\om n\ \varphi_{\cof}(n,x_{\cof}),\]
 and letting $\alpha_n$ be the least $\alpha$ such that
 \[ \Ss_{\alpha}(\RR)\sats\varphi_{\cof}(n,x_{\cof}),\]
 then $\alpha_n<\alpha_{n+1}<\beta$ for all $n<\om$, and $\sup_{n<\om}\alpha_n=\beta$, and finally,
 if $\beta=\gamma+\om$
 where $\gamma$ is a limit or $\gamma=0$, then $\gamma<\alpha_0$.
\end{dfn}

Note that the ordinals $\alpha_n$
are allowed to be successors, and $\beta$ might be of form $\beta=\gamma+\om$ for some $\gamma$. We have:
\begin{fact}
 Assume $\DC_\RR$
 and let $[\beta,\beta]$ be a projective-like $\Ss$-gap with $\cof^{L(\RR)}(\beta)=\om$.
 Then there is a $\beta$-ascending pair $(x_{\cof},\varphi_{\cof})$.
\end{fact}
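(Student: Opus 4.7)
The plan is to derive the pair from a real-parameter $\Sigma_1^{\Ss_\beta(\RR)}$ surjection of $\RR$ onto $\beta$, supplied by the projective-like gap assumption, combined with a $\DC_\RR$ choice of an $\om$-sequence of preimages whose spacing forces the corresponding first-witness stages to strictly ascend and to be cofinal in $\beta$.

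By standard projective-like gap analysis (the $\Sigma_1$-projectum of $\Ss_\beta(\RR)$ is $\RR$; cf.\ \cite{twms}), fix a real $z^*$ and a $\Sigma_1$ formula $\theta$ of the $L(\RR)$ language such that the relation $\pi(y)=\eta\iff\Ss_\beta(\RR)\sats\theta(y,\eta,z^*)$ defines a partial function $\pi:\RR\to\beta$ whose range is $\beta$. Put $\delta=\gamma$ if $\beta=\gamma+\om$ with $\gamma$ a limit or $\gamma=0$, and $\delta=0$ otherwise, and fix (using $\cof^{L(\RR)}(\beta)=\om$) a cofinal $\om$-sequence $\langle d_n\rangle$ in $\beta$ with $d_0>\delta$. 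Via $\DC_\RR$, construct inductively reals $y_n$ with $\beta_n:=\pi(y_n)<\beta$ and $\beta_0>\delta$, such that for each $n$,
\[ \beta_{n+1}>\max\bigl(d_{n+1},\ \alpha_n^*\bigr),\quad \alpha_n^*:=\min\{\alpha:\Ss_\alpha(\RR)\sats\ex\eta_0<\dots<\eta_n\ {\textstyle\bigwedge_{i\leq n}}\theta(y_i,\eta_i,z^*)\}. \]
Each $\alpha_n^*<\beta$ by $\Sigma_1$-upward absoluteness, since $\Ss_\beta(\RR)$ is the union of the $\Ss_\xi(\RR)$ for $\xi<\beta$.

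Let $x_{\cof}\in\RR$ code $(z^*,\langle y_n:n<\om\rangle)$, and let $\varphi_{\cof}(n,x)$ be the $\Sigma_1$ formula
\[ \ex\eta_0<\dots<\eta_n\ {\textstyle\bigwedge_{i\leq n}}\theta((x)_{i+1},\eta_i,(x)_0), \]
where $(x)_j$ denotes the $j$-th component of $x$ under a fixed coding of countable real-sequences. Because $\pi$ is a function, the witnesses $\eta_i$ in any $\Ss_\alpha(\RR)\sats\varphi_{\cof}(n,x_{\cof})$ must satisfy $\eta_i=\pi(y_i)=\beta_i$; hence the least such $\alpha$ is exactly $\alpha_n^*$. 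This sequence is strictly increasing by the spacing $\beta_{n+1}>\alpha_n^*$, cofinal in $\beta$ since $\alpha_n^*\geq\beta_n\geq d_n$ and $\sup_n d_n=\beta$, and satisfies $\alpha_0>\delta$ since $\beta_0>\delta$, giving a $\beta$-ascending pair.

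The main obstacle is the first step: the appeal to $\rho_1^{\Ss_\beta(\RR)}=\RR$ at a projective-like gap. This is standard gap analysis, but is the one substantive input; once it is in hand, the $\DC_\RR$ bookkeeping (choosing each $y_{n+1}\in\pi^{-1}(\xi)$ for some $\xi>\max(d_{n+1},\alpha_n^*)$) and the verification via $\Sigma_1$-upward absoluteness and uniqueness of $\pi$ are routine.
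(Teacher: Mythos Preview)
Your argument is correct. The paper states this as a \emph{Fact} with no proof supplied, treating it as routine background, so there is no ``paper's own proof'' to compare against. Your approach---pulling back a cofinal $\om$-sequence through a $\Sigma_1^{\Ss_\beta(\RR)}$ partial surjection $\pi:\RR\to\beta$ and using $\DC_\RR$ to space the preimages so that the first-witness stages strictly ascend---is the natural way to establish it, and the verification you give (uniqueness of the witnesses $\eta_i$ via upward absoluteness of $\theta$ and functionality of $\pi$, the bound $\alpha_n^*>\beta_n$ forcing strict increase, and the $d_n$-domination forcing cofinality) is sound.

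Two small points worth making explicit in a final write-up, though neither is a gap: first, your $\varphi_{\cof}$ as literally written has a variable-length block of existential quantifiers and a variable-length conjunction of $\Sigma_1$ formulas; to see it is genuinely $\Sigma_1$ one should rewrite it as $\exists s,t\,[\text{$s,t$ are sequences of length $n{+}1$, $s$ increasing, and }\forall i\le n\ \theta_0(t(i),(x)_{i+1},s(i),(x)_0)]$ with $\theta_0$ the $\Sigma_0$ matrix of $\theta$, so that the inner universal is bounded over a $\Sigma_0$ matrix. Second, to apply $\DC_\RR$ in its usual form (with a total relation), one should set the relation to be trivially total on ``invalid'' finite sequences; you implicitly do this, but it is worth saying.
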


Since $\AD^{L(\RR)}$ implies DC, we are free to apply this fact.
The following result describes the direct analogue of ladder mice just beyond $[\beta,\beta]$:

\begin{tm}
Assume $\ZF+\AD^{L(\RR)}$. Let $\beta$ be $\om$-standard.
Let $(x_{\cof},\varphi_{\cof})$ be $\beta$-ascending, and let $\left<\alpha_n\right>_{n<\om}$
be as in Definition \ref{dfn:beta-ascending}.

Write $\Gamma=\Sigma_1^{\Ss_\beta(\RR)}$.
Let $y\in\RR$. Then there is a minimal sound $(y,x_{\cof})$-mouse
$M=M^\Gamma_{\ladder}(y,x_{\cof})$
such that:
\begin{enumerate}
 \item $M$ has no largest cardinal
 (so $\rho_1^M=\OR^M$),
 \item $\rho_2^M=\om$ and $p_2^M=\emptyset$,
 \item For each $n<\om$
 there is an $M$-cardinal $\rung_n^M<\OR^M$ such that $M\sats$``$\rung_n^M$ is not Woodin'', and letting
 $Q_n\pins M$
 be the Q-structure for $M|\rung_n^M$, there is no above-$\rung_n^M$ iteration strategy $\Sigma$ for $Q_n$ with $\Sigma\in\Ss_{\alpha_n}(\RR)$.\footnote{But $Q_n$ will be above-$\rung_n^M$-iterable via a strategy in $\Ss_\beta(\RR)$.}
\end{enumerate}
\end{tm}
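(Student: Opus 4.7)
The plan is to construct $M=M^\Gamma_{\ladder}(y,x_{\cof})$ inductively, building one rung at a time, indexed by the cofinal sequence $\langle\alpha_n\rangle_{n<\om}$ associated to $(x_{\cof},\varphi_{\cof})$. Starting from a seed premouse containing $y$ and $x_{\cof}$, I would build, for each $n$, a sound premouse $M_n$ extending its predecessor, whose topmost $M_n$-cardinal $\rung_n^{M_n}$ has Q-structure $Q_n\pins M_n$ that is above-$\rung_n^{M_n}$ iterable via a strategy in $\Ss_\beta(\RR)$ but not via any strategy in $\Ss_{\alpha_n}(\RR)$. The existence of such a step-up from $M_n$ to $M_{n+1}$ should follow from a core-model-induction argument at level $\alpha_n<\beta$: inside $\Ss_{\alpha_n}(\RR)$ the induction hypothesis supplies mice witnessing level-$\alpha_n$ truth, but no premouse with an additional candidate non-Woodin whose Q-structure is $\Ss_{\alpha_n}(\RR)$-iterable, since such a premouse would allow the induction to push past $\alpha_n$, contradicting the minimality in the definition of $\alpha_n$ from the $\beta$-ascending pair. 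Then $M$ is defined as the union (or a direct limit) of the $M_n$.

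For the structural conditions, no largest cardinal holds since $\sup_n\rung_n^M=\OR^M$ by construction, yielding $\rho_1^M=\OR^M$. For $\rho_2^M=\om$ and $p_2^M=\emptyset$, the key observation is that the rungs are uniformly $\Sigma_2^M$-definable from $x_{\cof}$ via $\varphi_{\cof}$: roughly, $\rung_n^M$ is the least $M$-cardinal $\rung$ such that some proper segment of $M|\rung$ witnesses, internally, that $\varphi_{\cof}(n,x_{\cof})$ has been verified, a condition expressible using one existential quantifier over an $M$-internal analogue of $\Ss_{\alpha_n}(\RR)$. Since the entire ordinal structure of $M$ is cofinally coded by $\om$ in this way with no additional parameter, the second projectum collapses to $\om$ with empty standard parameter. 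The strong anti-correctness condition (3) is then immediate from the inductive choice of the $Q_n$.

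The main obstacle will be establishing existence and iterability of the step-mice $M_n$ with the strong form of clause (3). This is where the hypothesis that $\beta$ is not of the form $\gamma+\om$ with $\gamma$ ending a strong gap becomes essential: it ensures the relevant $\Sigma_1$-hulls at level $\alpha_n$ remain inside the projective-like gap $[\beta,\beta]$, so that the core model induction can traverse the rung without stalling, and that the $Q_n$ genuinely require going up to $\Ss_\beta(\RR)$ for a strategy. Minimality would follow from comparison: given another $M'$ satisfying (1)--(3) over $(y,x_{\cof})$, an iteration-comparison together with the $\varphi_{\cof}$-definability of rungs forces the identification, since both $M$ and $M'$ would have to have the same rungs and the same Q-structures at each rung. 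A secondary technical obstacle is verifying that the $\om$-limit of the $M_n$ is itself a sound premouse with exactly the prescribed projecta and empty standard parameter, which amounts to delicate fine-structural bookkeeping, analogous to that carried out by Woodin and Steel for the ladder mouse at the projective-like gap $[\om,\om]$.
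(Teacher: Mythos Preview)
The paper does not prove this theorem in detail. It simply defines a \emph{$\Gamma$-ladder over $X$} (an $X$-premouse $N$ with $\Lp_\Gamma(N|\xi)\pins N$ for all $\xi$, and for each $\alpha<\alphagap$ an $N$-cardinal $\rung$ which is Woodin in $\Lp_{\Gamma_\alpha}(N|\rung)$) and then writes $M^\Gamma_{\ladder}(X)$ for the unique sound iterable minimal such. Existence is taken from the general mouse-capturing machinery under $\AD^{L(\RR)}$ (via the cited references), and properties (1)--(3) are essentially read off the definition together with minimality.

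Your reasoning for the key clause (3) is backwards. You argue that $Q_n$ cannot be above-$\rung_n$-iterable in $\Ss_{\alpha_n}(\RR)$ because ``such a premouse would allow the induction to push past $\alpha_n$, contradicting the minimality of $\alpha_n$''. But $\alpha_n$ is just the least $\alpha$ with $\Ss_\alpha(\RR)\sats\varphi_{\cof}(n,x_{\cof})$; the existence of an $\Ss_{\alpha_n}(\RR)$-iterable Q-structure would not contradict that. The correct picture is that $\rung_n$ is \emph{defined} to be the least $M$-cardinal which is Woodin in $\Lp_{\Gamma_{\alpha_n}}(M|\rung_n)$, equivalently, such that $P_n(M|\rung_n)\sats$``$\rung_n$ is Woodin''. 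Since $\Lp_\Gamma(M|\rung_n)\pins M$ and $M$ itself has no Woodin, there is a Q-structure $Q_n\pins M$ for $M|\rung_n$; and $Q_n\not\ins\Lp_{\Gamma_{\alpha_n}}(M|\rung_n)$ precisely because $\rung_n$ is still Woodin there. That is exactly the statement that $Q_n$ has no above-$\rung_n$ strategy in $\Ss_{\alpha_n}(\RR)$. So (3) is definitional once $\rung_n$ exists; there is no separate obstruction argument to run.

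What actually needs proof is that such $\rung_n$ exist, and this is where your invocation of core model induction is misplaced. No CMI step is performed at each $\alpha_n$. The input is mouse capturing for $\Gamma=\Sigma_1^{\Ss_\beta(\RR)}$: since $\Ss_\beta(\RR)\sats\varphi_{\cof}(n,x_{\cof})$, the prewitness $P_n(X)$ exists as a proper segment of $\Lp_\Gamma(X)$ for every relevant $X$, and it carries a Woodin. Iterating $\Lp_\Gamma$ over $(y,x_{\cof})$ therefore produces the rungs in order, and the minimal ladder is the union of the resulting $n$-partial-$\Gamma$-ladders. Your sketch of (1) and (2) is broadly correct, but note that $\rho_2^M=\om$ with $p_2^M=\emptyset$ really uses that $M$ is the \emph{minimal} ladder: the map $n\mapsto\rung_n^M$ is $\Sigma_2^M(\{x_{\cof}\})$ and cofinal in $\OR^M$, and every element of $M$ is $\rSigma_1^M$-definable from some $\rung_n^M$ together with a point below it.
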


By \cite[***Lemma 3.1]{gaps_as_derived_models}, we have:
\begin{fact}\label{fact:Sigma_1-elem_pi}
 Assume $\ZF+\AD^{L(\RR)}$.
 Let $\beta$ be $\om$-standard and
 $(x_{\cof},\varphi_{\cof})$ be 
 $\beta$-ascending. Let $\Gamma=\Sigma_1^{\Ss_\beta(\RR)}$, $y\in\RR$ and $M=M^{\Gamma}_{\ladder}(y,x_{\cof})$. Then there is a unique pair $(\bar{\beta},\pi)$ such that  $\bar{\beta}\in\OR$
 and
 \[ \pi:\Ss_{\bar{\beta}}(\RR^M)\to\Ss_\beta(\RR) \]
 is $\Sigma_1$-elementary.
 Moreover,
 $\RR^M=\RR\cap\Ss_{\bar{\beta}}(\RR^M)$
 and $\pi\rest\RR^M=\id$.
\end{fact}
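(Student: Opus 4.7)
The plan is to construct $\bar{\beta}$ as the internal analogue of $\beta$ computed from $(x_{\cof},\varphi_{\cof})$ inside the $\Ss(\RR^M)$-hierarchy, and to realize $\pi$ as the $\Sigma_1$-hull map over $\RR^M$. Set $\bar{\alpha}_n$ to be the least ordinal with $\Ss_{\bar{\alpha}_n}(\RR^M)\sats\varphi_{\cof}(n,x_{\cof})$, and $\bar{\beta}:=\sup_{n<\om}\bar{\alpha}_n$. Define $\pi$ so that for every $\Sigma_1$-Skolem term $\tau$ of the $L(\RR)$-language and all $\vec{r}\in\RR^M$ for which $\tau^{\Ss_{\bar{\beta}}(\RR^M)}(\vec{r})$ is defined,
\[ \pi\bigl(\tau^{\Ss_{\bar{\beta}}(\RR^M)}(\vec{r})\bigr)=\tau^{\Ss_\beta(\RR)}(\vec{r}). \]
Once $\pi$ is shown $\Sigma_1$-elementary and total on $\Ss_{\bar{\beta}}(\RR^M)$, uniqueness of $(\bar{\beta},\pi)$ is immediate from $\Sigma_1$-condensation for the $\Ss$-hierarchy over $\RR^M$, since the image of any such $\pi$ must coincide with $\Hull_1^{\Ss_\beta(\RR)}(\RR^M)$, whose transitive collapse determines both $\bar{\beta}$ and $\pi$.

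I would proceed level by level along the ladder. For each $n<\om$, let $Q_n\pins M$ be the Q-structure at the rung $\rung_n^M$. By the defining properties of the ladder mouse, $Q_n$ is above-$\rung_n^M$ iterable in $\Ss_\beta(\RR)$ but has no iteration strategy in $\Ss_{\alpha_n}(\RR)$, and $\alpha_n$ is least such. Using standard coding of (putative) premice and iteration strategies by reals, this correspondence is what links $\bar{\alpha}_n$ to $\alpha_n$: a witness inside $\Ss_{\bar{\alpha}_n}(\RR^M)$ for $\varphi_{\cof}(n,x_{\cof})$ is pulled up, under $\pi$, to a witness inside $\Ss_{\alpha_n}(\RR)$ for the same formula. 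I would show inductively that $\pi\rest\Ss_{\bar{\alpha}_n}(\RR^M)$ is a well-defined $\Sigma_1$-elementary map into $\Ss_{\alpha_n}(\RR)$, forcing $\bar{\alpha}_n<\bar{\alpha}_{n+1}$ and producing $\bar{\beta}=\sup_n\bar{\alpha}_n$ with the stated properties. The equality $\RR^M=\RR\cap\Ss_{\bar{\beta}}(\RR^M)$ then follows from $\pi\rest\RR^M=\id$ and the $\Sigma_1$-elementarity of $\pi$.

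The main obstacle will be verifying $\Sigma_1$-elementarity across these levels. The downward direction is cleaner: a $\Sigma_1$ statement $\varphi(\vec{r})$ with parameters in $\RR^M$ that is true in $\Ss_\beta(\RR)$ reflects to some $\Ss_{\alpha_n}(\RR)$ by the ascending property of the $\alpha_n$'s; one then reads the witness back into $\Ss_{\bar{\alpha}_n}(\RR^M)$ through the premouse coding, checking that the relevant witnessing set is realized inside $M$ at a level below $\rung_{n+1}^M$. The upward direction is more delicate, as one must show that whenever $\Ss_{\bar{\alpha}_n}(\RR^M)\sats\varphi(\vec{r})$, the corresponding external statement is witnessed in $\Ss_{\alpha_n}(\RR)$; for this one uses the above-$\rung_n^M$ iterability of $Q_n$ in $\Ss_\beta(\RR)$ to certify that the internal witness is genuine, together with the minimality of $\alpha_n$ to place the resulting witness at the correct external level. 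This is precisely where the hypothesis that $\beta$ is $\om$-standard, rather than merely that $\cof^{L(\RR)}(\beta)=\om$, enters — the exclusion of $\beta=\gamma+\om$ with $\gamma$ ending a strong gap prevents a mismatch between the internal and external levels that would break the upward transfer.
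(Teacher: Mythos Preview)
The paper does not prove this statement; it is recorded as a \emph{Fact} and attributed to \cite[Lemma 3.1]{gaps_as_derived_models}. So there is no in-paper proof to compare against, and your proposal should be judged on its own.

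Your outline has the right shape---take the transitive collapse of $\Hull_1^{\Ss_\beta(\RR)}(\RR^M)$ and identify it with an initial segment of the $\Ss$-hierarchy over $\RR^M$---but the step you label ``read the witness back into $\Ss_{\bar{\alpha}_n}(\RR^M)$ through the premouse coding'' is where the actual content lies, and as written it is not a proof. The mechanism the paper (and the cited source) relies on is the characterization recorded at the start of \S\ref{sec:coarse_countable_cof}: for $\Sigma_1$ formulas $\psi$ and reals $x$, one has $\Ss_\beta(\RR)\sats\psi(x)$ iff there is a $\psi(x)$-prewitness (a mouse with a Woodin above which the extender algebra forces $\psi(x)$) iterable via a strategy in $\Ss_\beta(\RR)$. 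The point is then that for $x\in\RR^M$, any such prewitness is absorbed into $M$ by the $\Lp_\Gamma$-closure of $M$, and conversely segments of $M$ that look like prewitnesses really are iterable in $\Ss_\beta(\RR)$. This is what makes $\Sigma_1$-truth over $\Ss_\beta(\RR)$ with real parameters from $M$ an internal property of $M$, and hence of the $\Ss$-hierarchy over $\RR^M$. Your sketch gestures at iterability of $Q_n$ but never invokes prewitnesses, so the transfer of a $\Sigma_1$ witness from $\Ss_{\alpha_n}(\RR)$ down into $\Ss_{\bar{\alpha}_n}(\RR^M)$ is left unexplained.

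Two smaller points. First, your derivation of $\RR^M=\RR\cap\Ss_{\bar{\beta}}(\RR^M)$ from $\pi\rest\RR^M=\id$ alone is incomplete: you need that any real $\Sigma_1$-definable over $\Ss_\beta(\RR)$ from parameters in $\RR^M$ already lies in $M$, which uses $\OD_{<\beta}(\vec{s})\sub\Lp_\Gamma(\vec{s})$ together with the $\Lp_\Gamma$-closure of $M$. Second, your diagnosis of where $\om$-standardness enters is off: the exclusion of successors of strong gaps is needed for the scale and prewitness machinery (so that $\Sigma_1$ truth is captured by mice), not for some separate ``mismatch'' in the upward transfer.
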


Write $\bar{\beta}_{y,x_{\cof}}=\bar{\beta}$, where $\bar{\beta}$
is as above.

\begin{tm}\label{tm:anti-correctness_om-standard}
 Assume $\ZF+\AD^{L(\RR)}$.
 Let $\beta$ be $\om$-standard and
 $(x_{\cof},\varphi_{\cof})$ be 
 $\beta$-ascending. Write $\Gamma=\Sigma_1^{\Ss_\beta(\RR)}$.
 Then there are recursive maps $\varphi\mapsto\psi_{\Sigma_2,\varphi}$ and $\varphi\mapsto\varrho_{\Sigma_2,\varphi}$ such that for all $\Pi_2$ formulas $\varphi$, $\psi_{\Sigma_2,\varphi}$ and $\varrho_{\Sigma_2,\varphi}$ are $\Sigma_2$ formulas \tu{(}all with respect to the $L(\RR)$ language\tu{)} and for all $y\in\RR$, letting $\bar{\beta}=\bar{\beta}_{y,x_{\cof}}$, for  all $x\in \RR\cap M_{\ld}^\Gamma(x_{\cof},y)$, we have:
 \begin{enumerate}[label=\tu{(}\alph*\tu{)}]
  \item $\Ss_{\beta}(\RR)\sats\varphi(x)\iff\Ss_{\bar{\beta}}(\RR^M)\sats\psi_{\Sigma_2,\varphi}(x,x_{\cof})$, and
  \item $\Ss_{\bar{\beta}}(\RR^M)\sats\varphi(x)\iff\Ss_\beta(\RR)\sats\varrho_{\Sigma_2,\varphi}(x,x_{\cof})$.
 \end{enumerate}
\end{tm}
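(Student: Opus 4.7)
The plan is to define a suitable notion of \emph{$\varphi(x, x_{\cof})$-prewitness}, mirroring the classical definitions sketched in the introduction for $L_{\om_1^{\mathrm{ck}}}$ and $M_1$. Such a prewitness will be a countable premouse $N$ with $x, x_{\cof} \in N$ and a Woodin cardinal $\delta^N$, equipped with an internal ladder of non-Woodins $\rung_n^N$ mimicking the $\alpha_n$-structure specified by $(x_{\cof}, \varphi_{\cof})$, such that the extender algebra $\BB^N_{\delta^N}$ over $N$ forces the derived $\Ss$-hierarchy to satisfy $\varphi(x)$ at the level corresponding to the top of the ladder; this is the forthcoming Definition~\ref{dfn:varphi(x)-prewitness_tree_version}. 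The prewitness condition is arranged to be $\Sigma_1$ in the $L(\RR)$-language, while $\Pi_1$-iterability in the sense of Definition~\ref{dfn:Pi_1-iterable} is $\Pi_1$. One then sets $\psi_{\Sigma_2, \varphi}(x, x_{\cof})$ to assert ``there is a $\Pi_1$-iterable $\varphi(x, x_{\cof})$-prewitness''; dually, $\varrho_{\Sigma_2, \varphi}(x, x_{\cof})$ asserts ``there is a countable $\Pi_1$-iterable putative $M^\Gamma_{\ld}(x_{\cof}, y')$ (for some $y' \in \RR$) whose associated $\Ss_{\bar\beta^N}(\RR^N)$ satisfies $\varphi(x)$''. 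Both formulas are $\Sigma_2$, and the two maps are recursive in $\varphi$.

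For the backward implication of (a), suppose $\Ss_{\bar\beta}(\RR^M) \sats \psi_{\Sigma_2,\varphi}(x, x_{\cof})$, witnessed by some $N \in \Ss_{\bar\beta}(\RR^M)$. Countability of $N$ and the coding of iteration trees on countable premice as reals allow the $\Sigma_1$-elementarity of $\pi$ (Fact~\ref{fact:Sigma_1-elem_pi}) to lift the $\Pi_1$-iterability of $N$ from $\Ss_{\bar\beta}(\RR^M)$ to $V$. A Ville-type argument then runs: one genericity-iterates $N$ so as to absorb $V$'s reals into an iterate's extender algebra, identifies the iterate's derived $\Ss$-sequence with an initial segment of $\Ss_\beta(\RR)$ using the match of the internal $\alpha_n$-ladder to the true one, and concludes $\Ss_\beta(\RR) \sats \varphi(x)$. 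For the forward implication, assuming $\Ss_\beta(\RR) \sats \varphi(x)$, I construct a prewitness $N$ inside $\Ss_{\bar\beta}(\RR^M)$ via an internal fine-structural construction inside $M$ itself: the $\rung_n^M$-ladder and $Q_n$-structures from the preceding theorem furnish the required ladder of non-Woodins, $\Sigma_1$-elementarity of $\pi$ lets one verify the internal forcing condition for $\varphi(x)$, and $\Pi_1$-iterability of $N$ descends from that of an appropriate segment of $M$.

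For part (b), the forward implication is almost immediate: $M$ itself is a $\Pi_1$-iterable genuine $M^\Gamma_{\ld}(x_{\cof}, y)$ with $\Ss_{\bar\beta}(\RR^M) \sats \varphi(x)$, so it witnesses $\Ss_\beta(\RR) \sats \varrho_{\Sigma_2,\varphi}(x, x_{\cof})$. The backward implication uses comparison: given $N \in V$ a $\Pi_1$-iterable putative $M^\Gamma_{\ld}(x_{\cof}, y')$ with $\Ss_{\bar\beta^N}(\RR^N) \sats \varphi(x)$, one compares $N$ with $M$ using $M$'s $\Ss_\beta(\RR)$-iteration strategy; uniqueness and minimality of ladder mice then imply that the $\Sigma_1$-hulls of $\Ss_\beta(\RR)$ determined by $N$ and $M$ agree on their common reals, forcing the $\Pi_2$-theory of $(\Ss_{\bar\beta^N}(\RR^N), x)$ to coincide with that of $(\Ss_{\bar\beta}(\RR^M), x)$ and hence $\varphi(x)$ to transfer. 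The principal technical hurdle is the forward direction of (a): one must calibrate the prewitness notion so that, on the one hand, $\Pi_1$-iterability really suffices (via a genericity iteration producing the correct derived $\Ss$-level in the face of the $\om$-cofinal $\alpha_n$-ladder) to recover $\varphi(x)$ in $V$, and on the other hand, $M$'s internal ladder converts cleanly into a prewitness living in $\Ss_{\bar\beta}(\RR^M)$; the interplay between the $\rung_n^M$-ladder and the $\alpha_n$-ladder under genericity iteration is where the novel content of Definition~\ref{dfn:varphi(x)-prewitness_tree_version} lies.
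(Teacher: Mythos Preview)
Your conception of a $\varphi(x)$-prewitness is fundamentally different from the paper's, and the discrepancy is not cosmetic: your version cannot work here. You describe a prewitness as a premouse $N$ with a Woodin cardinal $\delta^N$ whose extender algebra forces the derived $\Ss$-hierarchy to satisfy $\varphi(x)$, in direct analogy with the $M_1$/$\Pi^1_3$ case. But $M = M^\Gamma_{\ld}(x_{\cof},y)$ has \emph{no} Woodin cardinals; the rungs $\rung_n^M$ are explicitly non-Woodin in $M$ (there is a Q-structure $Q_n \pins M$ killing Woodinness of each). Hence no proper segment of $M$ has a Woodin, and your prewitness cannot live as a segment of $M$, which is what you need for the forward direction of (a). Moreover, $\Pi_1$-iterability (Definition~\ref{dfn:Pi_1-iterable}) is Q-structure-guided; a premouse with a genuine Woodin would produce $\Gamma$-maximal trees with no Q-structure, so $\Pi_1$-iterability would not control the genericity iteration past $\delta^N$ as your backward-direction sketch requires.

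The paper's Definition~\ref{dfn:varphi(x)-prewitness_tree_version} is instead a pair $(N,\Delta)$ where $N$ is a $0$-partial-$\Gamma$-ladder (no Woodin at all) and $\Delta \in N$ is a tree of nested partial ladders, each extending by one rung and one node in the scale tree $\wt{T}^N_{\neg\varphi,x}$. The heart of the argument is Lemma~\ref{lem:equivalence_coarse_countable_cof}: $\Ss_\beta(\RR)\sats\varphi(x)$ iff $\wt{T}^M_{\neg\varphi,x}$ is wellfounded iff some such $(N,\Delta)$ with $N\pins M$ exists. The implication from wellfoundedness to prewitness is an induction on rank in $\wt{T}^M$, building $\Delta$ by taking hulls at each step (this is the ``internal construction'' you gesture at, but the object built is a rank-witnessing tree, not a forcing witness). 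The reverse implication runs a genericity iteration of $N$ that \emph{drops in model at every rung}, so that if $\Ss_\beta(\RR)\sats\neg\varphi(x)$ one obtains a correct tree with infinitely many drops on its unique branch---the contradiction is fine-structural, not via a derived model computation. Your sketch of (b) is closer in spirit to the paper's, though the paper's $\varrho_{\Sigma_2,\varphi}$ does not invoke comparison; it simply asserts the existence of a $\Pi_1$-iterable putative $\Gamma$-ladder $N$ whose internal $\Ss_\gamma(\RR^N)$ satisfies $\varphi^\RR(x)$, and uses that any such $N$ satisfies $\RR^M\sub N$.
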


Finally, in \S\ref{sec:admissible_gaps},
we will identify, on a cone, ladder mice at the ends of weak gaps and at successors of the ends of strong gaps, prove an associated anti-correctness result.\footnote{\S\ref{sec:admissible_gaps}
in particular needs some more explanation added.}

\section{Projective-like gaps of countable cofinality}

\subsection{The mouse set theorem on an explicit cone}\label{sec:coarse_countable_cof}

Fix an $\om$-standard
ordinal $\alphagap$ (see Definition \ref{dfn:beta-ascending}). In particular, $\Ss_\alphagap(\RR)=\Hull_1^{\Ss_\alphagap(\RR)}(\RR)$ and  for each $\Sigma_1$ formula $\psi$ and each real $x$, 
$\Ss_\alphagap\sats\psi(x)$ iff there is a $\psi(x)$-witness which is iterable via a strategy in $\Ss_\alphagap(\RR)$ (cf.~\cite{gaps_as_derived_models}).
Let $(x_{\cof},\varphi_{\cof})$
be $\alphagap$-ascending.

\begin{lem}\label{lem:scales_after_weak_gap}
Suppose that $\alphagap=\beta+\om$
where $\beta$ ends a weak gap $[\alpha,\beta]$. Then there is a real $x_{\mathrm{sc}}$
from which the standard scale $\vec{\leq}$ at the end of the weak gap $[\alpha,\beta]$ \tu{(}cf.~\cite{scales_in_LR}\tu{)}
is definable over $\Ss_\beta(\RR)$ from $x_{\mathrm{sc}}$,
and $x_{\mathrm{sc}}$ is $\Delta_1^{\Ss_\alphagap(\RR)}(\{x_{\cof}\})$.
\end{lem}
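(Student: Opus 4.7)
The plan is to use Steel's scale analysis from \cite{scales_in_LR} to isolate a real parameter $w$ from which the standard scale at the end of $[\alpha,\beta]$ is uniformly definable over $\Ss_\beta(\RR)$, and then to show that such a $w$ can be chosen canonically and $\Delta_1^{\Ss_\alphagap(\RR)}$-definably from $x_{\cof}$, exploiting that every $\alpha_n$ lies strictly in the interval $(\beta,\beta+\om)$.

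First I would recall that because $[\alpha,\beta]$ is a \emph{weak} gap, Steel produces a partial $\Sigma_1^{\Ss_\beta(\RR)}(w)$-definable surjection from $\RR$ onto a cofinal subset of the relevant critical ordinal, and from any such \emph{weakness witness} $w$ one recovers $\vec{\leq}$ by a uniform definition over $\Ss_\beta(\RR)$. So it suffices to exhibit a single such $w$ as $x_{\mathrm{sc}}$; to make the choice canonical I would take $x_{\mathrm{sc}}$ to be the least real (under a fixed recursive coding) that is a weakness witness. The key claim is then that the predicate ``$w$ is a weakness witness'' is $\Delta_1^{\Ss_\alphagap(\RR)}(\{x_{\cof}\})$ uniformly in $w$.

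To prove this claim, I would observe that the predicate is $\Pi_2^{\Ss_\beta(\RR)}(w)$, hence a Boolean combination of $\Sigma_1^{\Ss_{\beta+k}(\RR)}(w)$-statements for some fixed finite $k$ independent of $w$. Since $(x_{\cof},\varphi_{\cof})$ is $\alphagap$-ascending with $\alphagap=\beta+\om$ and $\beta$ a limit, Definition \ref{dfn:beta-ascending} gives $\beta < \alpha_0$, so every $\alpha_n$ lies in $(\beta,\beta+\om)$; and because $\sup_n \alpha_n = \beta+\om$ with strict increase, for each fixed finite $k$ one can $\Sigma_1^{\Ss_\alphagap(\RR)}(\{x_{\cof}\})$-identify an index $n$ with $\alpha_n > \beta+k$, using $\varphi_{\cof}$. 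At such an $\alpha_n$, the $\Sigma_1^{\Ss_{\beta+k}(\RR)}(w)$-theory becomes $\Delta_1$-accessible: the $\Sigma_1$ direction is immediate, and the $\Pi_1$ direction uses the strict increase and the minimality of the $\alpha_m$ to detect failures at the next level. Since $k$ is fixed, the Boolean combination inherits $\Delta_1^{\Ss_\alphagap(\RR)}(\{x_{\cof}\})$ complexity, and so does the least-witness construction.

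The main obstacle I anticipate is pinning down the explicit finite $k$: the weakness-witness predicate asserts both that a certain $\Sigma_1^{\Ss_\beta(\RR)}(w)$-partial map is defined on all reals in the intended sense and that its range is cofinal in the critical ordinal, and one must unpack these quantifiers carefully---using that quantification over reals is absolute between $\Ss$-levels above $\Ss_\beta(\RR)$, and that quantification below the critical ordinal becomes bounded at a definite finite level of the $\Ss$-hierarchy---to obtain a definite $k$. With that $k$ in hand, the reflection argument through the $\alpha_n$'s finishes the proof.
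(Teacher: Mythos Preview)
Your approach is genuinely different from the paper's, and the descriptive-set-theoretic strategy you outline is reasonable in spirit, but there is a real gap at the selection step.

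The paper does not attempt to isolate a canonical weakness witness at all. Instead it works mouse-theoretically: it fixes $n$ large enough, takes the \emph{least} pointwise-definable $x_{\cof}$-premouse $N$ which has a Woodin $\delta$ and is an above-$\delta$ $\varphi_{\cof}(n,x_{\cof})$-prewitness, and which is iterable via a strategy in $\Ss_{\alphagap}(\RR)$; this $\{N\}$ is $\Delta_1^{\Ss_{\alphagap}(\RR)}(\{x_{\cof}\})$ because ``least such pointwise-definable mouse'' is a canonical choice among countable \emph{sets}, not among reals. Then $\Coll(\om,\delta)$ forces that the end-of-gap scale is definable over $\Ss_\beta(\RR)$ from the generic real, and a $\Delta_1^{\Ss_{\alphagap}(\RR)}(\{N\})$-definable generic filter $g$ yields $x_{\mathrm{sc}}\in N[g]$. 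The point is that the mouse machinery manufactures $x_{\mathrm{sc}}$ explicitly, with no uniformization needed.

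Your proposal, by contrast, asks for ``the least real (under a fixed recursive coding) that is a weakness witness''. But there is no recursive wellorder of $\RR$, so this phrase has no content. To make your argument go through you would need a $\Delta_1^{\Ss_{\alphagap}(\RR)}(\{x_{\cof}\})$ selection principle on the (nonempty) set of weakness witnesses. The natural candidate---uniformization via a scale on $\Sigma_1^{\Ss_{\alphagap}(\RR)}$---is exactly what this lemma is being invoked to produce (see the footnote to Definition \ref{dfn:T_n} immediately following), so that route is circular. You could instead try to argue that some weakness witness already lies in $\Lp_{\Gamma}(x_{\cof})$ or is otherwise canonically singled out, but establishing that essentially recovers the paper's mouse-based construction. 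The complexity analysis you sketch for the \emph{predicate} ``$w$ is a weakness witness'' (reflecting $\Pi_2^{\Ss_\beta(\RR)}$ through the $\alpha_n$'s) is plausible and useful, but it only gives you a $\Delta_1^{\Ss_{\alphagap}(\RR)}(\{x_{\cof}\})$ \emph{set} of reals; the missing idea is how to pass from that set to a single $\Delta_1^{\Ss_{\alphagap}(\RR)}(\{x_{\cof}\})$ \emph{real}.
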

\begin{proof}
Fix $n<\om$. Then there is an $x_{\cof}$-premouse $N$ and $\delta<\OR^N$ such that $N$ is pointwise definable, $N\sats\ZF^-$ + ``$\delta$ is Woodin'',
$N$ is an above-$\delta$, $\varphi_{\cof}(n,x_{\cof})$-prewitness, and $N$ is $(\om,\om_1)$-iterable via a strategy $\Sigma\in \Ss_{\alphagap}(\RR)$. The least such $\{N\}$ is $\Delta_1^{\Ss_{\alphagap}(\RR)}(\{x_{\cof}\})$. By taking $n$ large enough, note that
 $\Coll(\om,\delta)$ forces over $N$ that the standard scale $\vec{\leq}$ at the end of the weak gap $[\alpha,\beta]$ is definable over $\Ss_\beta(\RR)$ from the generic real $\dot{x}$ as parameter, meaning more precisely that this fact is expressed by a statement in the (partial) theory of a level of $L(\RR)$ for which $N^{\Coll(\om,\delta')}$ has a universally Baire representation, where $\delta'$ is the least Woodin of $N$ such that $\delta'>\delta$.
But there is also an $(N,\BB_\delta^N)$-generic filter $g$
such that
$p\in g$ and $\{g\}$ is $\Delta_1^{\Ss_{\alphagap}(\RR)}(\{N\})$, and hence $\{g\}$ is $\Delta_1^{\Ss_{\alphagap}(\RR)}(\{x_{\cof}\})$. So we get
a real $x_{\mathrm{sc}}\in N[g]$
as desired.
\end{proof}

\begin{dfn}\label{dfn:T_n}
As in Definition \ref{dfn:beta-ascending}, let $\alpha_n$ be the least $\alpha$ such that $\Ss_{\alpha}(\RR)\sats\varphi_{\cof}(n,x_{\cof})$.
Let
\[ A_n=\big\{(\varphi,x,y)\bigm|\varphi\text{ is  a }\Pi_1\text{ formula }\wedge x,y\in\RR\wedge\Ss_{\alpha_n}(\RR)\sats\varphi(x,y,z)\big\}.\]
Let $\vec{\Phi}_n$ be the  natural very good scale on $A_n$.\footnote{See \cite{scales_in_LR}
and Lemma \ref{lem:scales_after_weak_gap}.}
In particular, $\vec{\Phi}_n\in\Ss_{\alphagap}(\RR)$
and $\left<\vec{\Phi}_n\right>_{n<\om}$ is $\Sigma_1^{\Ss_{\alphagap}(\RR)}(\{x_{\cof}\})$. Let $\vec{\leq}_n$ be the sequence of prewellorders associated to $\vec{\Phi}_n$. Write $\vec{\Phi}_n=\left<\Phi_{ni}\right>_{i<\om}$ and $\vec{\leq}_n=\left<{\leq}_{ni}\right>_{i<\om}$.
Take $\Phi_{ni}$ regular
(its range is an ordinal),
let $\kappa_{ni}=\rg(\Phi_{ni})$,
and $\kappa_n=\sup_{i<\om}\kappa_{ni}$. Let $T_n$ be the tree of $\vec{\Phi}_n$.
\end{dfn}

\begin{dfn}
Given an ordinal $\alpha\leq\alphagap$, we say that a putative iteration tree $\Tt$ on a countable tame premouse $N$ is \emph{$\Ss_{\alpha}(\RR)$-guided}
iff for every limit ordinal $\lambda<\lh(\Tt)$,
there is $Q\ins M^\Tt_\lambda$
such that $Q$ is a $\delta(\Tt\rest\lambda)$-sound Q-structure for $M(\Tt\rest\lambda)$ and $Q$ is above-$\delta(\Tt)$,
$(k,\om_1)$-iterable via a strategy $\Sigma\in\Ss_{\alpha}(\RR)$
(in the codes, if $\alpha<\om_1$),
where $k$ is least such that $\rho_{k+1}^Q\leq\delta(\Tt)$.
\end{dfn}

\begin{dfn}\label{dfn:Pi_1-iterable}
Let $N$ be a tame $n$-sound premouse, where $n<\om$. We say that $N$ is \emph{$\Pi_1^{\alphagap}$-$(n,\om_1)$-iterable} iff for every putative $n$-maximal tree $\Tt$ on $N$ with $\Tt\in\HC$,
and every ordinal $\alpha<\alphagap$, if $\Tt$ is $\Ss_\alpha(\RR)$-guided then:
\begin{enumerate}[label=--]\item  $\Tt$ has wellfounded models, and
 \item if $\Tt$ has limit length and there is $Q\in\HC$ such that $Q$ is a  $\delta(\Tt)$-sound Q-structure for $M(\Tt)$ and $Q$ is $(k,\om_1)$-iterable via a strategy $\Sigma\in\Ss_\alpha(\RR)$,
 where $k$ is least such that $\rho_{k+1}^Q\leq\delta(\Tt)$,
 then there is a $\Tt$-cofinal branch $b$ such that $Q\ins M^\Tt_b$.\footnote{We will only apply this in case $N$ is a projecting sound premouse, in which case it is reasonable to hope for such a branch $b$.}
\end{enumerate}

If $N$ is tame and $\om$-sound,
we say that $N$ is \emph{$\Pi_1^{\alphagap}$-iterable}
iff $N$ is $\Pi_1^{\alphagap}$-$(\om,\om_1)$-iterable.
\end{dfn}

\begin{dfn}
Let $\Gamma=\Sigma_1^{\Ss_\alphagap(\RR)}$. For $\alpha\leq\alphagap$, let $\Gamma_\alpha=\Sigma_1^{\Ss_\alpha(\RR)}$. Given a transitive swo'd\footnote{Recall that \emph{swo'd} abbreviates \emph{self-wellordered}, which says that $X$ has form $(X',W)$ where $X'$ is transitive and $W$ is a wellorder of $X'$. This induces a canonical wellorder on any $X$-premouse, which extends $W$. We could probably make do with there simply being a wellorder of $X$ in $\Jj(X)$, in which case there might not be a \emph{canonical} wellorder of $X$-premice, but there would be some $\vec{x}\in X^{<\om}$ and some wellorder determined by $\vec{x}$. Actually we could just work with arbitrary transitive $X$; we have only avoided this to slightly simplify some  statements to do with cardinalities in $X$-premice, and since it suffices for our purposes.} $X\in\HC$,  $\Lp_{\Gamma_\alpha}(X)$ denotes the stack of all $X$-premice $N$
such that $N$ is sound, projects to $X$, and there is an $(\om,\om_1)$-iteration strategy $\Sigma$ for $N$ with $\Sigma\in\Ss_\alpha(\RR)$.
We write $\Lp_\Gamma=\Lp_{\Gamma_{\alphagap}}$.

Given a transitive  swo'd $X\in\HC$ with $x_{\cof}\in X$, and given $n<\om$, $P_n(X)$
denotes the least $P\pins\Lp(X)$ which is a $\varphi_{\cof}(n,x_{\cof})$-prewitness (as an $X$-premouse, so the relevant Woodin cardinals $\delta$ have $\delta>\rank(X)$, etc).
\end{dfn}

 As usual, $\Lp_\Gamma(X)$
is a sound, passive premouse with no largest proper segment. But because $\alphagap$ has countable cofinality, it can be that $\Lp_\Gamma(X)\not\sats\ZF^-$
(so $\Lp_\Gamma(X)$ could project to $X$),
since the strategies $\Sigma$ witnessing that  the projecting $N\pins\Lp_{\Gamma}(X)$
are in fact valid, could appear cofinally in $\Ss_{\alphagap}(\RR)$,
and so the join of these strategies (which is essentially a strategy for $\Lp_\Gamma(X)$) would then not be in $\Ss_{\alphagap}(\RR)$. The simplest  example is of course $\stack_{n<\om}M_n^\#$.
This also occurs given $x_{\cof}\in X$:

\begin{lem}
 Let $X\in\HC$ be transitive swo'd with $x_{\cof}\in X$.
 Then $\Lp_\Gamma(X)=\stack_{n<\om}P_n(X)$.
\end{lem}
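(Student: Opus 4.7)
The plan is to prove the two inclusions separately. For $\stack_n P_n(X) \ins \Lp_\Gamma(X)$, I would first verify that each $P_n(X)$ lies in $\Lp_\Gamma(X)$ by locating an iteration strategy for it at a level ${<}\alphagap$ of $L(\RR)$. Since $P_n(X) \pins \Lp(X)$, it is sound, projects to $X$, and is fully iterable in $V=L(\RR)$, so its strategy $\Sigma_n$ exists at some level $\xi_n$. Using that $P_n(X)$ is a $\varphi_\cof(n,x_\cof)$-prewitness witnessing a $\Sigma_1^{L(\RR)}$-fact first true at $\Ss_{\alpha_n}(\RR)$, together with the standard definability-of-strategy-from-parameters argument, one checks $\xi_n \leq \alpha_{n+1} < \alphagap$, so $P_n(X) \in \Lp_\Gamma(X)$. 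Linearity of the $\Lp$-hierarchy among sound $X$-premice projecting to $X$, combined with the minimality clause in the definition of each $P_n$, gives $P_n(X) \pins P_{n+1}(X)$, so $\stack_n P_n(X)$ is a well-defined initial segment of $\Lp_\Gamma(X)$.

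For the reverse inclusion, suppose $N \pins \Lp_\Gamma(X)$ is sound, projects to $X$, and has strategy $\Sigma_N \in \Ss_\alpha(\RR)$ for some $\alpha < \alphagap$. Pick $n$ with $\alpha_n$ sufficiently larger than $\alpha$ (possible since $\sup_{n<\om}\alpha_n = \alphagap$). By linearity of the $\Lp$-hierarchy, $N$ and $P_n(X)$ are comparable under $\pins$. If $N \ins P_n(X)$ we are done; otherwise $P_n(X) \pins N$, and then the standard lifting of $n$-maximal trees on $P_n(X)$ to trees on $N$ (using that extenders from the $P_n(X)$-sequence lie on the $N$-sequence) produces an iteration strategy $\Sigma_{P_n}$ for $P_n(X)$ definable from $\Sigma_N$, hence $\Sigma_{P_n} \in \Ss_\alpha(\RR)$.

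The crux and main obstacle is then to derive a contradiction from the conclusion that $P_n(X)$ is a $\varphi_\cof(n,x_\cof)$-prewitness with iteration strategy in $\Ss_\alpha(\RR)$ for $\alpha < \alpha_n$. For this I would invoke Woodin's prewitness-to-witness machinery: iterating $P_n(X)$ via $\Sigma_{P_n}$ makes desired parameters $\BB_\delta$-generic over the iterate, and the prewitness condition forces $\varphi_\cof(n,x_\cof)$ to hold in the (symmetric) derived model at the Woodin. This derived model is constructible from $\Sigma_{P_n}$ and so sits at level $\alpha + k$ of $L(\RR)$ for a small $k$; the standard $\Sigma_1^{L(\RR)}$-absoluteness at this level then yields $\Ss_{\alpha + k}(\RR) \sats \varphi_\cof(n,x_\cof)$. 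Choosing $n$ at the outset so that $\alpha_n > \alpha + k$ contradicts the minimality of $\alpha_n$. The delicate point is the precise level-tracking in the derived-model step: one must balance the complexity of $\Sigma_{P_n}$, the derived model, and $\varphi_\cof$, using the $\om$-standard-ness of $\alphagap$ and the uniformity of the $\alphagap$-ascending pair $(x_\cof,\varphi_\cof)$ to keep $\alpha + k$ strictly below $\alpha_n$.
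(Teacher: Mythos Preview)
Your reverse inclusion is correct and rests on the same underlying correspondence as the paper's proof, but you unwind it pointwise where the paper gives a two-line global contradiction. The paper simply observes: if $P=\stack_n P_n(X)\pins\Lp_\Gamma(X)$, then $P$ itself has an iteration strategy $\Sigma\in\Ss_{\alphagap}(\RR)$ (inherited from any $N$ with $P\ins N\pins\Lp_\Gamma(X)$ projecting to $X$); but from $\Sigma$ one computes the entire theory $t=\Th_{\rSigma_1}^{\Ss_{\alphagap}(\RR)}(\RR)$, since $\Ss_{\alphagap}(\RR)\sats\psi(y)$ iff some $P_n(X)$, iterated via $\Sigma$, yields a $\psi(y)$-witness. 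Hence $t\in\Ss_{\alphagap}(\RR)$, which is impossible. Your version fixes a single $N$ with strategy at level $\alpha<\alphagap$, picks $n$ with $\alpha_n>\alpha$, and derives that $P_n(X)$ (if $\pins N$) has a strategy at level $\alpha$, whence $\Ss_\alpha(\RR)\sats\varphi_{\cof}(n,x_{\cof})$, contradicting minimality of $\alpha_n$. Both are fine; the paper's is shorter because it avoids the case split and the element-by-element comparison.

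Your worry about ``delicate level-tracking'' is unnecessary. The correspondence cited at the outset of \S\ref{sec:coarse_countable_cof} is exact: $\Ss_\gamma(\RR)\sats\psi(y)$ iff there is a $\psi(y)$-prewitness with strategy in $\Ss_\gamma(\RR)$, no additive constant. So a strategy for $P_n(X)$ in $\Ss_\alpha(\RR)$ with $\alpha<\alpha_n$ immediately gives $\Ss_\alpha(\RR)\sats\varphi_{\cof}(n,x_{\cof})$, done.

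Your forward inclusion is phrased backwards. That $P_n(X)$ is a $\varphi_{\cof}(n,x_{\cof})$-prewitness does not by itself bound the level of its strategy; the implication runs the other way. The clean argument (which the paper just calls ``certainly'') is: since $\Ss_{\alpha_n}(\RR)\sats\varphi_{\cof}(n,x_{\cof})$, the same correspondence produces some prewitness $Q\pins\Lp_\Gamma(X)$ with strategy in $\Ss_{\alpha_n}(\RR)$; minimality gives $P_n(X)\ins Q$, and lifting gives $P_n(X)$ a strategy in $\Ss_{\alpha_n}(\RR)\sub\Ss_{\alphagap}(\RR)$. This also yields $P_n(X)\pins P_{n+1}(X)$ directly, since otherwise $P_{n+1}(X)\ins P_n(X)$ would give $P_{n+1}(X)$ a strategy at level $\alpha_n<\alpha_{n+1}$, contradicting minimality of $\alpha_{n+1}$.
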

\begin{proof}
 Certainly $P=\stack_{n<\om}P_n(X)\ins\Lp_\Gamma(X)$,
 so suppose $P\pins\Lp_\Gamma(X)$.
 Then there is an iteration strategy $\Sigma\in\Ss_{\alphagap}(\RR)$ for $P$.
 But $t=\Th_{\rSigma_1}^{\Ss_{\alphagap}(\RR)}(\RR)$
 can be computed from $\Sigma$,  so $t\in\Ss_{\alphagap}(\RR)$, which is impossible.
\end{proof}

\begin{rem}\label{rem:pres_P_n(X)_under_hull,it_map}
 The premice of form $P_n(X)$ are preserved under the relevant hulls and iteration maps, since being a $\varphi_{\cof}(n,x_{\cof})$-prewitness is suitably definable. We will use this fact wherever needed, without explicit mention.
\end{rem}

\begin{dfn}
Let $X\in\HC$ be transitive swo'd. A \emph{$\Gamma$-ladder over $X$}
is an $X$-premouse $N$ such that
for each $\xi\in[\rank(X),\OR^N)$, we have $\Lp_{\Gamma}(N|\xi)\pins N$, and
for each $\alpha<\alphagap$
there is $\rung<\OR^N$  such that $\rung$  is an $N$-cardinal and $\rung$ is Woodin in $\Lp_{\Gamma_\alpha}(N|\theta)$.

Suppose also that
 $x_{\cof}\in X$. Note
that an $X$-premouse $N$ is a $\Gamma$-ladder over $X$ iff
for each $\xi<\OR^N$, we have $\Lp_{\Gamma}(N|\xi)\pins N$,
and for each $n<\om$,
there is  $\theta<\OR^N$ such that $\theta$ is an $N$-cardinal, 
$P_n(N|\theta)\sats$``$\theta$ is Woodin''. Suppose $N$ is such. Let $\theta_n^N$ denote the least such $\theta$ (with respect to $n<\om$),
and $P^N_n=P_n(N|\theta)$.
Then the $\theta_n^N$ are called the \emph{rungs} of the ladder.
A $\Gamma$-ladder $N$ over $X$is called \emph{minimal} if no $N'\pins N$ is a $\Gamma$-ladder (in particular, $\sup_{n<\om}\theta_n^N=\OR^N$). We write $M_{\ladder}^\Gamma(X)$
for the unique iterable sound minimal $\Gamma$-ladder over $X$.
 We say that a premouse $N$ is \emph{$\Gamma$-ladder-small} if no $N'\ins N$ is a $\Gamma$-ladder. 
\end{dfn}

\begin{dfn}\label{dfn:Sigma_1^alphagap-correct}
 Let $A\sub\RR$. We say that $A$ is \emph{$\Sigma_1^{\alphagap}$-correct} iff there is an ordinal $\bar{\alpha}$
 and $\pi$ such that $A=\RR\cap\Ss_{\bar{\alpha}}(A)$ and
$\pi:\Ss_{\bar{\alpha}}(A)\to\Ss_{\alphagap}(\RR)$ is $\Sigma_1$-elementary.
\end{dfn}

\begin{rem}\label{rem:Sigma_1^alphagap-correct}
Suppose $A$ is $\Sigma_1^{\alphagap}$-correct.
Note that since $\Ss_{\alphagap}(\RR)=\Hull_1^{\Ss_{\alphagap}}(\RR)$, the witnessing pair $(\bar{\alpha},\pi)$ is uniquely determined, and moreover,
$\Ss_{\bar{\alpha}}(A)=\Hull_1^{\Ss_{\bar{\alpha}}(A)}(A)$.\end{rem}

By
\cite[***Lemma 3.1]{gaps_as_derived_models}, we have:

\begin{lem}\label{lem:Sigma_1^alphagap-correct}
 Let $X\in\HC$ be transitive swo'd with $x_{\cof}\in X$.
 Let $N$ be an $X$-premouse
 such that for all $\xi<\OR^N$,
 we have $\Lp_{\Gamma}(N|\xi)\ins N$. Then:
 \begin{enumerate}
  \item $\RR\cap N$
 is $\Sigma_1^{\alphagap}$-correct.
 \item $\RR\cap N[g]$ is $\Sigma_1^{\alphagap}$-correct
 whenever $g$ is $(N,\PP)$-generic
 for a forcing $\PP\in N$.
 \end{enumerate}
\end{lem}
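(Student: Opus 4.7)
The plan is to form the $\Sigma_1$-Skolem hull of $A$ inside $\Ss_\alphagap(\RR)$ and verify that its real part equals $A$; the inverse of the transitive collapse will then supply the required $\pi$. In both parts set $A=\RR\cap N$ (resp.\ $A=\RR\cap N[g]$). Because $[\alphagap,\alphagap]$ is a projective-like $\Ss$-gap, $\Sigma_1^{\Ss_\alphagap(\RR)}$ admits Skolem functions, so $H=\Hull_1^{\Ss_\alphagap(\RR)}(A)$ is a $\Sigma_1$-elementary substructure of $\Ss_\alphagap(\RR)$. Its transitive collapse satisfies ``$V=L(\dot\RR)$'' with $\dot\RR$ interpreted as $\RR\cap H$, so it has the form $\Ss_{\bar\alpha}(\RR\cap H)$ for some $\bar\alpha\in\OR$; the anticollapse $\pi$ is $\Sigma_1$-elementary with $\pi\rest(\RR\cap H)=\id$. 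It therefore suffices to establish $\RR\cap H=A$.

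The inclusion $A\sub\RR\cap H$ is automatic. For the converse, let $y\in\RR\cap H$, so $\{y\}$ is $\Sigma_1^{\Ss_\alphagap(\RR)}$-definable from some $\vec a\in A^{<\om}$. In particular, for each $k<\om$ there is a $\Sigma_1$ formula $\psi_k$ with $y(k)=j\iff\Ss_\alphagap(\RR)\sats\psi_k(\vec a,j)$. Because $(x_{\cof},\varphi_{\cof})$ is $\alphagap$-ascending, the $\Sigma_1$-theory of $\Ss_\alphagap(\RR)$ is the union of those of the $\Ss_{\alpha_m}(\RR)$, so one can fix a single $m<\om$ such that every true instance of $\psi_k(\vec a,j)$ already holds in $\Ss_{\alpha_m}(\RR)$. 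By the prewitness characterization of $\Sigma_1^\alphagap$-truth recalled at the start of this subsection, each of these relations is then decidable inside any $\Pi_1^{\alphagap}$-iterable $\varphi_{\cof}(m,x_{\cof})$-prewitness over a swo'd transitive set containing $\vec a$, $x_{\cof}$, and $X$. Pick $\xi<\OR^N$ so large that $\vec a\in N|\xi$, associate to $N|\xi$ a swo'd transitive $X'$ containing $\vec a,x_{\cof},X$, and use the hypothesis $\Lp_\Gamma(X')\ins N$ to conclude $P_m(X')\in N$. Decoding $y$ bit-by-bit from $P_m(X')$ then places $y\in N$, so $y\in A$; this proves (1).

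For (2) the same argument applies to $A=\RR\cap N[g]$, provided we can locate $P_m(X')\in N[g]$ for a swo'd transitive $X'\in N[g]$ containing $\vec a,x_{\cof},X$. Take $X'$ of the form $(N|\xi)[g]$ for $\xi$ large; then $P_m(X')$ is computed inside $N[g]$ using that being a $\varphi_{\cof}(m,x_{\cof})$-prewitness is first-order, and that the $\Sigma_1^{\alphagap}$-iteration strategies certifying mice in $\Lp_\Gamma(N|\xi)$ canonically extend through the set-forcing $\PP\in N$. The reading-off argument then runs unchanged, giving $y\in N[g]=A$.

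The main obstacle is precisely this last point: propagating $\Lp_\Gamma$-closure from $N$ to $N[g]$. It reduces to showing that the $\Sigma_1^{\alphagap}$-iteration strategies witnessing the projecting segments of $\Lp_\Gamma(N|\xi)$ persist to $V[g]$, which uses the universal Baireness (in $\Ss_\alphagap(\RR)$) of such strategies at the projective-like-gap level; this is essentially the content of Lemma 3.1 of \cite{gaps_as_derived_models} being cited in the statement.
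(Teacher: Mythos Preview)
The paper gives no proof here at all; the entire justification is the sentence ``By \cite[***Lemma 3.1]{gaps_as_derived_models}, we have:'' preceding the statement. So there is nothing to compare against, and your proposal is an attempt to unpack the cited result rather than to reproduce a proof in the paper.

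Your outline for part~(1) is sound. The ``single $m$'' step is fine once phrased correctly: $y$ lies in the hull because a \emph{single} $\Sigma_1$ formula $\theta(\cdot,\vec a)$ picks it out, and once $\Ss_{\alpha_m}(\RR)\sats\theta(y,\vec a)$, each bit-formula $\psi_k(\vec a,y(k))$ is witnessed there simultaneously. The step ``decoding $y$ bit-by-bit from $P_m(X')$'' is where the real content hides, and what you are implicitly using is somewhat more than the mere existence of $P_m(N|\xi)$: you need that for each $k,j$, the assertion $\Ss_{\alphagap}(\RR)\sats\psi_k(\vec a,j)$ is equivalent to the existence of an iterable $\psi_k(\vec a,j)$-prewitness over $N|\xi$, and that all such prewitnesses are already segments of $\Lp_\Gamma(N|\xi)\ins N$. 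That package---the prewitness characterization of $\Sigma_1^{\alphagap}$ together with the mouse set calculation $\OD^{<\alphagap}(\vec a)\sub\Lp_\Gamma$ relativized to $N|\xi$---is exactly what the cited external lemma supplies. So your argument is correct but not really independent of the citation.

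For part~(2) you correctly isolate the obstacle (propagating $\Lp_\Gamma$-closure through a set-generic extension) and then explicitly defer to the same external lemma. That is honest, but it means your treatment of (2) adds nothing beyond what the paper already outsources. If you want an argument that stands on its own, you would need to show directly that $\Lp_\Gamma((N|\xi)[g])=\Lp_\Gamma(N|\xi)[g]$, which is the substantive fact from \cite{gaps_as_derived_models}.
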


  The following lemma is established by the obvious adaptation of the proof of the analogous fact for the original ladder mice, as in \cite{rudo_mouse_sets} or \cite{rudo_ladder}.

 \begin{lem}
  Let $X\in\HC$ be transitive swo'd with $x_{\cof}\in X$. Let $M=M_{\ladder}^\Gamma(X)$. Let $N$ be a $\Pi_1^{\alphagap}$-iterable
  $X$-premouse which is sound, with $\rho_\om^N=X$.
  Then either
  $N\pins M|\om_1^M$ or $M|\om_1^{M}\ins N$.
 \end{lem}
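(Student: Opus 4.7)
The plan is to run a standard coiteration of $N$ against $M|\om_1^M$ and appeal to the ladder structure of $M$ together with $\Pi_1^{\alphagap}$-iterability of $N$ to push the comparison through all limit stages. This is the obvious adaptation of the comparison argument used by Rudominer and Woodin for the original ladder mouse \cite{rudo_ladder}, with the only change being that the rungs are now indexed by a general $\om$-sequence $\langle\alpha_n\rangle$ rather than by the concrete levels appearing in the original setting, and that $X$ replaces $\emptyset$. Concretely, on the $M$-side I use the (piecewise) iteration strategy supplied by the definition of $\Gamma$-ladder: below each rung $\theta_n^M$ the Q-structure $P_n^M$ is iterable via a strategy in $\Ss_{\alpha_n}(\RR)$, and these strategies are pieced together along the $\Uu$-side since they agree on their common domains.

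The key point is the limit-stage verification. Let $\lambda<\lh(\Tt)$ be a limit and let $c$ be the branch chosen on the $M$-side of the coiteration, giving common part $M(\Tt\rest\lambda)=M(\Uu\rest\lambda)$ and Q-structure $Q\ins M^\Uu_c$. By Remark \ref{rem:pres_P_n(X)_under_hull,it_map}, $Q$ is an initial segment of some $P_n(M(\Tt\rest\lambda))$, where $n$ is chosen so that $\delta(\Tt\rest\lambda)$ is below the image of $\theta_n^M$; and $P_n(M(\Tt\rest\lambda))$ is $(\om,\om_1)$-iterable via a strategy in $\Ss_{\alpha_n}(\RR)$, since $P_n$ is defined by a $\Sigma_1$ property preserved by iteration maps. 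Hence $\Tt\rest\lambda$ is $\Ss_{\alpha_n}(\RR)$-guided with witness $Q$, so by $\Pi_1^{\alphagap}$-$(\om,\om_1)$-iterability of $N$ there is a $\Tt$-cofinal branch $b$ with $Q\ins M^\Tt_b$. Q-structure agreement forces $b$ to match $c$ at the level required to continue, and the coiteration proceeds.

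Since $N$ and $M|\om_1^M$ are both hereditarily countable, the coiteration terminates at some countable stage with a standard successful outcome. Both premice being sound, the usual argument gives that one of the final models is an initial segment of the other and that neither side undergoes a drop on its main branch. If the $N$-side wins, then $M|\om_1^M\ins N$. Otherwise the $M$-side wins, so $N\ins M$; and since $N$ is not a $\Gamma$-ladder over $X$ (if it were, minimality of $M$ would contradict soundness and projection of $N$ to $X$), the segment of $M$ in which $N$ sits is strictly below the supremum of the rungs $\theta_n^M$, hence below $\om_1^M$, giving $N\pins M|\om_1^M$. The main obstacle is the limit-step argument: one must verify that the Q-structure appearing on the $M$-side genuinely coincides with a segment of the appropriate $P_n$ of the common part, and that its $\Ss_{\alpha_n}$-iterability survives the iteration map; this is exactly the content of Remark \ref{rem:pres_P_n(X)_under_hull,it_map} together with the preservation of $\Sigma_1$ definability through hulls.
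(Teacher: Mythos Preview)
Your approach is correct and matches the paper's, which simply cites the ``obvious adaptation'' of the original ladder-mouse comparison without further detail. One small inaccuracy: Remark~\ref{rem:pres_P_n(X)_under_hull,it_map} does not give that $P_n(Y)$ is iterable via a strategy in $\Ss_{\alpha_n}(\RR)$---the correct (and sufficient) observation is just that the Q-structure $Q$ at a limit stage is a proper segment of $\Lp_\Gamma(M(\Tt\rest\lambda))$ (since $\Lp_\Gamma(M^\Uu_c|\xi)\pins M^\Uu_c$ for all $\xi$ and the ladder iterate $M^\Uu_c$ has no Woodin cardinal), whence $Q$ is above-$\delta(\Tt\rest\lambda)$ iterable via a strategy in some $\Ss_\alpha(\RR)$ with $\alpha<\alphagap$, which is exactly what $\Pi_1^{\alphagap}$-iterability of $N$ consumes.
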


We now want to move toward defining \emph{$\varphi(y)$-prewitness}
for a $\Pi_2$ formula $\varphi$ (of the $L(\RR)$ language),
and show that for $x\geq_T x_{\cof}$, and $y\in\RR^{M_{\ladder}^\Gamma(x)}$, 
we have $\Ss_\alpha(\RR)\sats\varphi(y)$
iff there is a $\varphi(y)$-prewitness $N$ such that $N\pins M_{\ladder}^\Gamma(x)$.
The intention is that if $N$ is an iterable $\varphi(y)$-prewitness over $x$
which is $\Gamma$-ladder-small,
but $\neg\varphi(y)$ holds, as witnessed by $w\in\RR$,\footnote{Note that $\neg\varphi$ is $\Pi_2$, as opposed to $\all^\RR\Sigma_1$. But because we can refer to $x$, and hence to $x_{\cof}$, it suffices to deal with $\all^\RR\Sigma_1$ formulas.} then we can run a version of the stationary tower argument in \cite{rudo_ladder}, iterating out $N$ to make $w$ generic over various segments of the iterate,
using this to produce nodes in trees $T_x^{N'[g]}$ for various collapse generics $g$, where $T_x$ is the tree for producing a witness to $\neg\varphi(x)$. However, we will also arrange that the full iteration in fact drops infinitely often along its unique branch, and $N'$ is the common part model, and is a ladder mouse;
so since $N$ is iterable and is a proper segment of $M|\om_1^M$,
we will have reached two contradictions.
At each step of the process of iterating to make $w$ generic at the next ``rung'' of the ladder $N'$ we are producing,
there will be a drop in model,
at a cutpoint of the iteration.

\begin{dfn}
Given $X\in\HC$ with $x_{\cof}\in X$, an $X$-premouse $N$ 
and $n<\om$, we say that $N$ is an \emph{$n$-partial-$\Gamma$-ladder}
iff:
\begin{enumerate}[label=--]\item $\Lp_{\Gamma}(N|\xi)\ins N$ for each $\xi<\OR^N$,
\item there are $\rung_0<\ldots<\rung_n<\OR^N$
such that for each $i\leq n$,  $\rung_i$ is the least ordinal $\rung$ such that $\rung$ is an $N$-cardinal and 
 $P_n(N|\rung)\sats$``$\rung$ is Woodin'',
\item $\rung_n^{+N}<\OR^N$
and $\theta_n^{+N}$ is the largest cardinal of $N$.
\end{enumerate}

Let $N$ be an $n$-partial-$\Gamma$-ladder.
For $i\leq n$, we write $\rung_i^N$ for the least $N$-cardinal $\rung$ such that $\rung$ is Woodin in $P_i(N|\rung)$.
\end{dfn}

\begin{dfn}
Let $\varphi(u,v)$ be a $\Pi_1$ formula of the $L(\RR)$ language, of two free variables $u,v$,
and $\psi(u)$ be $\exists^\RR v\ \varphi(u,v)$.

Then $T_{\psi}$ denotes the tree projecting to \[A_\psi=\big\{x\in\RR\bigm|\Ss_{\alphagap}(\RR)\sats\psi(x)\big\},\]
derived in the standard manner from the sequence $\left<T_n\right>_{n<\om}$
(see Definition \ref{dfn:T_n}).
The details are like for the analogous tree introduced in \cite{rudo_ladder}. That is,
fix a recursive bijection  $f:\om^2\to\om$ such that  $f^{-1}(n)\sub n\times n$ for all $n<\om$,
and write $\ulcorner i,j\urcorner=f(i,j)$.
Also write $\ulcorner i_m,j_m\urcorner = m$.
Now let 
$(s,t,u)\in T_\psi$
iff for some $n<\om$,
we have $s,t:n\to\om$, $u:n\to\OR$,
and for each $m<n$,
letting $(i,j)=(i_m,j_m)$,
 \[a_m=\{\ell\leq m\bigm|\ulcorner i,k\urcorner=\ell\text{ for some }k\},\]
$c_m=\card(a_m)$ and $\tau_m:c_m\to a_m$ be the order-preserving bijection,
we have
\[ (\varphi,s\rest c_m,t\rest c_m,u'_m)\in T_i\]
where $u'_m:c_m\to\OR$ and $u'_m(b)=u(\tau_m(b))$.

Let $T_{\psi,x}$ for the section of $T_\psi$ at $x\in\RR$;
so $x\in A_\psi$ iff $T_{\psi,x}$ is illfounded.\end{dfn}

\begin{dfn}\label{dfn:T-tilde}Let $\psi$
be an $\exists^\RR\Pi_1$ formula of the $L(\RR)$ language.

Let $N$ be an $n$-partial-$\Gamma$-ladder. Let $g$ be $\Coll(\om,\theta_n^N)$-generic. Then $T_\psi^{N[g]}$ denotes the version of $T_\psi$ computed in $N[g]$.
That is, we use
the $\pi^{-1}({\vec{\leq}_n})$
in place of $\vec{\leq}_n$ (for each $n<\om$), where
\[ \pi:\Ss_{\bar{\alpha}}(\RR\cap N[g])\to \Ss_\alphagap(\RR) \]
is 
as in Definition \ref{dfn:Sigma_1^alphagap-correct}. By Lemma \ref{lem:T_indep_of_g} below,
$T_\psi^{N[g]}$ is independent of the choice of $g$,  and hence in $N$. We write $\widetilde{T}^N_\psi=T^{N[g]}_\psi$.
We write $\widetilde{T}_{\psi,x}^{N}$ for the section of $\widetilde{T}_\psi^N$ at $x\in\RR\cap N$.
If $\psi$ is clear from context we may just write $\widetilde{T}^N$ and $\widetilde{T}^N_x$ instead.

Let $N'$ be an $(n+1)$-partial-$\Gamma$-ladder with $N|\theta_n^{+N}=N'|\theta_n^{+N'}$;  note that $\theta_i^N=\theta_i^{N'}$ for all $i\leq n$. Then $\widetilde{\pi}^{NN'}_{\psi}:\widetilde{T}^N_\psi\to\widetilde{T}^{N'}_\psi$ denotes the map $\widetilde{\pi}$ given by setting \[\widetilde{\pi}((s,t,(\alpha_0,\ldots,\alpha_{k-1})))=(s,t,(\beta_0,\ldots,\beta_{k-1})) \]
where if $g$ is $(N,\Coll(\om,\theta_n^N))$-generic
and $g'$ is $(N',\Coll(\om,\theta_{n+1}^{N'}))$-generic
with $g\in N'[g']$, and $m<k$ and $x\in A_{i_m}^{N[g]}=A_{i_m}\cap N[g]$ and $\alpha_m=\Phi_{i_m,j_m}^{N[g]}(x)$, then $
\beta_m=\Phi_{i_m,j_m}^{N'[g']}(x)$. Also by Lemma \ref{lem:T_indep_of_g}, $\widetilde{\pi}$ is independent of $g,g'$,
and hence $\pi\in N'$.
We write $\widetilde{\pi}^{NN'}_{\psi,x}:T_{\psi,x}^{N}\to T_{\psi,x'}^{N'}$ for the corresponding section of $\widetilde{\pi}^{NN'}_\psi$ (where $s=x\rest\lh(s)$). In practice, $\psi$ will be clear from context, and we will just write $\widetilde{\pi}^{NN'}_x$.
\end{dfn}
\begin{lem}\label{lem:T_indep_of_g}
 The tree $T^{N[g]}_{\psi}$
 introduced in Definition \ref{dfn:T-tilde} is independent of the $N$-generic $g\sub\Coll(\om,\theta_n^N)$, and hence $\widetilde{T}^N_\psi=T^{N[g]}_\psi\in N$.
 
 The embedding $\widetilde{\pi}^{NN'}_{\psi}$ is independent of $g,g'$, and hence $\widetilde{\pi}^{NN'}_{\psi}\in N'$.
\end{lem}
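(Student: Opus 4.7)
The plan is to invoke weak homogeneity of the L\'evy collapses. First, every node $(s,t,u)$ of $T_\psi^{N[g]}$ lies in $N$: the sequences $s,t \in \om^{<\om}$ are finite sequences of naturals, and $u$ records norm values of the pulled-back scales, whose ranges are ordinals of $\Ss_{\bar{\alpha}}(\RR \cap N[g])$, hence ordinals of $N$. So it suffices to decide, for each fixed $(s,t,u) \in N$, whether the statement ``$(s,t,u) \in T_\psi^{N[g]}$'' holds. The crucial ingredient is the uniqueness statement in Remark \ref{rem:Sigma_1^alphagap-correct}: applied inside $N[g]$, it provides a canonical pullback $\pi$ of $\Ss_\alphagap(\RR)$ from $\RR \cap N[g]$, and this $\pi$ is definable in $N[g]$ by a fixed formula. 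Using the parameters $\theta_n^N, x_{\cof}, P^N_0,\ldots,P^N_n$, all of which lie in $N$, one writes down a $\Coll(\om,\theta_n^N)$-name $\dot T$ in $N$ expressing membership in $T_\psi^{N[\dot g]}$ via the pulled-back scales.

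Now $\Coll(\om,\theta_n^N)$ is weakly homogeneous in $N$, so any forcing statement with only ground-model parameters is decided by the trivial condition. In particular, ``$(s,t,u) \in \dot T$'' is decided by $\mathbf{1}$, and therefore has the same truth value in every generic extension $N[g]$. Hence $T_\psi^{N[g]}$ is independent of $g$ and belongs to $N$.

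For $\wt{\pi}^{NN'}_\psi$ the argument is analogous. Its value on a fixed tuple $(s,t,(\alpha_0,\ldots,\alpha_{k-1})) \in N$ is a tuple whose ordinal entries $\beta_m$ are characterized in $N'[g']$ (with $g \in N'[g']$) as the norm values, in the pulled-back scale of $N'[g']$, of the real realized in $N[g]$ whose first coordinates match those of the corresponding $\alpha_m$ from $N[g]$. Applying weak homogeneity of $\Coll(\om,\theta_{n+1}^{N'})$ inside $N'$, each $\beta_m$ is decided by the trivial condition of this forcing, so is independent of $g'$; within a fixed $N'[g']$, weak homogeneity of $\Coll(\om,\theta_n^N)$ similarly removes the dependence on $g$. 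The main obstacle is not the homogeneity step itself but the careful verification that the canonical pullback $\pi$ (and hence the scales $\pi^{-1}(\vec{\leq}_n)$) really is given in $N[g]$ by a fixed formula, so that $\dot T$ and the corresponding $\dot{\wt{\pi}}$ really are $N$-definable names; this reduces to the uniqueness clause in Remark \ref{rem:Sigma_1^alphagap-correct} together with the $\Sigma_1^{\Ss_\alphagap(\RR)}(\{x_{\cof}\})$-definability of $\langle \vec{\Phi}_n\rangle_{n<\om}$ from Definition \ref{dfn:T_n}.
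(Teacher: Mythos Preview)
Your homogeneity argument handles the first assertion cleanly: since the pulled-back prewellorders $\pi^{-1}(\vec{\leq}_n)$ are definable in $N[g]$ from the parameter $x_{\cof}\in N$ (via the internal $\Ss$-hierarchy, using the uniqueness in Remark~\ref{rem:Sigma_1^alphagap-correct} and the $\Sigma_1^{\Ss_{\alphagap}(\RR)}(\{x_{\cof}\})$-definability of the scales), membership of a fixed triple $(s,t,u)\in N$ in $T_\psi^{N[g]}$ is a statement about $N[g]$ with only ground-model parameters, and homogeneity of $\Coll(\om,\theta_n^N)$ decides it. This is a more hands-on route than the paper's, which just cites Hjorth's result on thin equivalence relations; both are valid here.

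For $\widetilde{\pi}^{NN'}_\psi$, however, your argument has a gap. The heart of the matter is well-definedness: you need that if $x\in\RR^{N[g_0]}$ and $x'\in\RR^{N[g_1]}$ (for possibly different $N$-generics $g_0,g_1\in N'[g']$) both have internal norm value $\alpha_m$ in their respective extensions, then $\Phi^{N'[g']}_{i_m,j_m}(x)=\Phi^{N'[g']}_{i_m,j_m}(x')$. Your sentence ``within a fixed $N'[g']$, weak homogeneity of $\Coll(\om,\theta_n^N)$ similarly removes the dependence on $g$'' does not do this: the relevant statement involves the $N'[g']$-norm $\Phi^{N'[g']}_{i_m,j_m}$, which is not a parameter available in $N$, so homogeneity over $N$ does not apply; and homogeneity over $N'$ only tells you that the \emph{truth value} of the well-definedness assertion is independent of $g'$, not that it is true. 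What is actually needed is that $\RR^{N[g_0]}$ and $\RR^{N[g_1]}$ meet the same $V$-equivalence classes of the thin relation induced by the norm $\Phi_{i_m,j_m}$---exactly Hjorth's theorem. Its proof uses more than homogeneity (in the mouse context, genericity iterations/iterability to transport reals between collapse extensions), and this is precisely what the paper is invoking.
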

\begin{proof}
 This is the fact, due to Hjorth,
 and explained in \cite{rudo_ladder}, about collapse generic extensions hitting the same equivalence classes of thin equivalence relations. 
\end{proof}

\begin{dfn}\label{dfn:varphi(x)-prewitness_tree_version}Let $X\in\HC$ be transitive swo'd with $x_{\cof}\in X$.
Let $N$ be an $X$-premouse, $x\in\RR^N$, and $\Delta,t,u\in N$ with
$(t,u)\in(\om\cross\OR)^{<\om}$.  
Let $\varphi$ be a $\all^\RR\Sigma_1$ formula of the $L(\RR)$ language. We say that $(N,\Delta)$ is a \emph{$(\varphi(x),t,u)$-prewitness}
iff, letting $n=\lh(t,u)$
and $(N_n,t_n,u_n,\Delta_n)=(N,t,u,\Delta)$, then
$N$ is an $n$-partial-$\Gamma$-ladder,
and writing $\widetilde{T}_x=\widetilde{T}_{\neg\varphi,x}$, then
 $(t,u)\in\widetilde{T}_x^{N}$, 
$\Delta\in N$
 is a non-empty tree (set of finite sequences  closed under initial segment) whose elements $\sigma$ have form
\[ \sigma=(\sigma_{n+1},\ldots,\sigma_{n+k}) \]
where $k<\om$ and for $0\leq i\leq k$, $\sigma_{n+i}$ has form
\[ \sigma_{n+i}=(N_{n+i},t_{n+i},u_{n+i},\Delta_{n+i}) \]
(so $\sigma_n=(N,t,u,\Delta)$, but $\sigma_n$ is not actually an element of $\sigma$), and moreover, for each $\sigma\in \Delta$, with $\sigma,k,\sigma_{n+i}$ as above, the following conditions hold:
\begin{enumerate}
\item If $\sigma\neq\emptyset$ then for every $i<k$, we have the following:
\begin{enumerate}\item
$N_{n+i+1}\pins N_{n+i}$,
\item $N_{n+i+1}$ is an $(n+i+1)$-partial ladder,
\item $\rho_1^{N_{n+i+1}}=\rho_\om^{N_{n+i+1}}=\theta_{n+i}^{+N_{n+i}}$
(so note $\theta_j^{N_{n+i+1}}=\theta_j^{N_{n+i}}$ for $j\leq n+i$),
\item $(t_{n+i+1},u_{n+i+1})\in \wt{T}_{x}^{N_{n+i+1}}$,
\item $\lh((t_{n+i+1},u_{n+i+1}))=n+i+1$,
\item $\pi_x^{N_{n+i},N_{n+i+1}}((t_{n+i},u_{n+i}))\ins(t_{n+i+1},u_{n+i+1})$,
\item $\Delta_{n+i+1}\in N_{n+i+1}$
is a  tree (set of finite sequences closed under initial segment),
\item 
$ (\sigma_{n+i+2},\ldots,\sigma_{n+k})\in \Delta_{n+i+1}$.
\end{enumerate}
\item
$\Delta_{n+k}=\{\tau\bigm|\sigma\conc\tau\in \Delta\}$.
\item 
For every
$(t',u')\in \wt{T}_x^{N_{n+k}}$
with $(t_{n+k},u_{n+k})\pins(t',u')$
and $\lh(t',u')=n+k+1$,
there is $\sigma'\in\Delta$
with $\sigma'=\sigma\conc((N',\wt{\pi}_x^{N_{n+k}N'}(t',u'),\Delta'))$
for some $N',\Delta'$.
\end{enumerate}

A \emph{$\varphi(x)$-prewitness}
is a $(\varphi(x),\emptyset,\emptyset)$-prewitness.
\end{dfn}

\begin{dfn}
Let $X\in\HC$ be transitive swo'd with $x_{\cof}\in X$.
Let $M=M^{\Gamma}_{\ladder}(X)$.
 Let $\psi$ be an $\exists^\RR\Pi_1$ formula. Then $\wt{T}_\psi^{M}$ denotes
 the direct limit of the trees $\wt{T}^{N_n}_\psi$,
 under the maps $\wt{\pi}^{N_nN_{n+1}}_\psi$,
 where $N_n=M|\theta_n^{++M}$.  And $\wt{\pi}^{N_nM}:\wt{T}^{N_n}_\psi\to\wt{T}^M_\psi$ denotes the direct limit map. Likewise for $\wt{T}_{\psi(x)}^{M}$ and $\wt{\pi}^M_{\psi(x)}$ for $x\in \RR^M$.
\end{dfn}

\begin{lem}\label{lem:equivalence_coarse_countable_cof}
Let $X\in\HC$ be transitive swo'd with $x_{\cof}\in X$.
Let $M=M^\Gamma_{\ladder}(X)$.
 Let $x\in\RR^{M}$ and $\psi$ be $\exists^\RR\Pi_1$.
 Then the following are equivalent:
 \begin{enumerate}[label=\tu{(}\roman*\tu{)}]
\item\label{item:not_psi} $\SS_{\alphagap}\sats\neg\psi(x)$
 \item\label{item:T-tilde_wfd}  $\wt{T}^{M}_{\psi,x}$
 is wellfounded.
 
\item\label{item:Pi_2-witness_pins_M} There is a $\neg\psi(x)$-prewitness $(N,\Delta)$
such that $N\pins M$ and
 $\rho_\om^N=\om$.
\end{enumerate}
\end{lem}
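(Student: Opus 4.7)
The plan is to prove (i) $\Leftrightarrow$ (ii) by correctness of the tree, (iii) $\Rightarrow$ (ii) by a descending-ordinals argument on $\Delta$, and (ii) $\Rightarrow$ (iii) by an inductive construction of $\Delta$ inside $M$.

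For (i) $\Leftrightarrow$ (ii), I would invoke Lemma \ref{lem:Sigma_1^alphagap-correct}: the reals of each $n$-partial ladder $N_n = M|\theta_n^{++M}$ and of its collapse extensions are $\Sigma_1^\alphagap$-correct, so each $\widetilde T^{N_n}_{\psi,x}$ is the natural pull-back of $T_{\psi,x}$ under the correctness embedding. An illfounded branch through $\widetilde T^M_{\psi,x}$ (the direct limit) transfers via the maps $\widetilde\pi^{N_n M}$ to yield an illfounded branch through $T_{\psi,x}$ in $V$, hence $\SS_\alphagap(\RR) \models \psi(x)$. Conversely, if $\psi(x)$ holds then some witness is captured at a scale $\vec\Phi_n$ (using cofinality of $\langle \alpha_n \rangle$ in $\alphagap$ and downward persistence of $\Pi_1$), and this pulls back via correctness into $\widetilde T^{N_n}_{\psi,x}$, hence into $\widetilde T^M_{\psi,x}$.

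For (iii) $\Rightarrow$ (ii), suppose for contradiction $(N, \Delta)$ is a $\neg\psi(x)$-prewitness with $N \pins M$ and $\rho_\om^N = \om$, but $\widetilde T^M_{\psi,x}$ is illfounded. Then $\SS_\alphagap(\RR) \models \psi(x)$, and by $\Sigma_1^\alphagap$-correctness of $\RR \cap N$, the tree $\widetilde T^N_{\psi,x}$ is illfounded in $V$; fix an infinite branch. Starting from $\emptyset \in \Delta$, I would use clause (3) of Definition \ref{dfn:varphi(x)-prewitness_tree_version} repeatedly to produce $\sigma_0 = \emptyset, \sigma_1, \sigma_2, \ldots$ in $\Delta$, where $\sigma_{k+1}$ extends $\sigma_k$ by one coordinate tracking a one-step extension of $(t_{n+k}, u_{n+k})$ along the chosen branch of $\widetilde T^{N_{n+k}}_{\psi,x}$. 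Clause (1)(a) then forces $N_{n+k+1} \pins N_{n+k}$ for every $k$, yielding a $\pins$-decreasing chain $\ldots \pins N_{n+2} \pins N_{n+1} \pins N_n = N$, hence the impossible $\OR^{N_n} > \OR^{N_{n+1}} > \OR^{N_{n+2}} > \ldots$, a contradiction.

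For (ii) $\Rightarrow$ (iii), I would work inside $M$. Choose $N \pins M$ that is sound, an $n$-partial-$\Gamma$-ladder for some $n$, and projects to $\om$; this exists by soundness of suitable segments of $M$ together with the cofinality of the rungs in $\OR^M$. By (ii), $\widetilde T^N_{\psi,x}$ embeds via $\widetilde\pi^{NM}$ into the wellfounded $\widetilde T^M_{\psi,x}$, so its nodes carry an ordinal rank. Then define $\Delta$ by recursion on this rank: at $\sigma \in \Delta$ of length $k$ with last entry $(N_{n+k}, t_{n+k}, u_{n+k}, \Delta_{n+k})$, setting $(N_n, t_n, u_n, \Delta_n) = (N, \emptyset, \emptyset, \Delta)$ when $\sigma = \emptyset$, for every one-step extension $(t', u')$ of $(t_{n+k}, u_{n+k})$ in $\widetilde T^{N_{n+k}}_{\psi,x}$, include a child $\sigma \conc ((N_{n+k+1}, \widetilde\pi^{N_{n+k} N_{n+k+1}}(t', u'), \Delta_{n+k+1}))$ in $\Delta$, where $N_{n+k+1} \pins N_{n+k}$ is an $(n+k+1)$-partial-$\Gamma$-ladder whose first $n+k+1$ rungs match those of $N_{n+k}$ and which projects to $\theta_{n+k}^{+N_{n+k}}$. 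Such $N_{n+k+1}$ exists because proper $\pins$-segments of a ladder mouse recover cardinals collapsed in the parent, furnishing the required new rung $\theta_{n+k+1}^{N_{n+k+1}}$. The main obstacle is this last step: one must verify at each stage the existence of an appropriate $N_{n+k+1}$ meeting clauses (1)(a)--(1)(g), and confirm that the whole $\Delta$ lies inside $N$. The existence of $N_{n+k+1}$ is the ladder-mouse analogue of the classical $\Pi^1_3$ prewitness construction inside $M_1$; the internality of $\Delta$ to $N$ follows from the bound on rank together with acceptability.
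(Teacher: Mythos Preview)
Your argument for (iii) $\Rightarrow$ (ii) has a genuine gap. You assert that ``by $\Sigma_1^{\alphagap}$-correctness of $\RR\cap N$, the tree $\widetilde{T}^N_{\psi,x}$ is illfounded in $V$''. This does not follow. The correctness statement (Lemma~\ref{lem:Sigma_1^alphagap-correct}) gives only a $\Sigma_1$-elementary embedding $\Ss_{\bar{\alpha}}(\RR^{N[g]})\to\Ss_{\alphagap}(\RR)$; it does not supply witnesses to $\exists^\RR$ statements. An infinite branch through $\widetilde{T}^N_{\psi,x}$ would correspond to a real $w\in N[g]$ with $\Ss_{\alphagap}(\RR)\sats\varphi(x,w)$, but the witness $w$ to $\psi(x)$ is an arbitrary real of $V$ and need not lie in any $\Coll(\om,\theta_0^N)$-extension of $N$. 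Consequently you have no way to keep producing one-step extensions $(t',u')\in\widetilde{T}^{N_{n+k}}_x$ at each stage, and the descending chain through $\Delta$ never gets started. The paper's proof of (iii) $\Rightarrow$ (i) instead fixes the external witness $w$ and \emph{iterates} $N$ (and then each successive $N_{n+k}$) to make $w$ generic for the extender algebra at the relevant rung; once $w$ is generic, its norm values yield the required extension $(t',u')$. The resulting $\om$-step process gives a correct tree whose unique cofinal branch drops infinitely often and whose common part model is a $\Gamma$-ladder --- both impossible. This genericity iteration is the heart of the implication, and your proposal omits it entirely.

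Your sketch for (ii) $\Rightarrow$ (iii) is in the right direction but too vague at the crucial point. You say ``such $N_{n+k+1}$ exists because proper $\pins$-segments of a ladder mouse recover cardinals collapsed in the parent'', and then acknowledge this is ``the main obstacle''. The paper resolves this concretely: working with the canonical $N_n=M|\theta_n^{++M}$, it sets
\[
\bar{N}=\cHull_1^{N_{n+1}}\big(\theta_n^{+N_n}\cup\{\theta_{n+1}^{N_{n+1}}\}\big),
\]
and uses condensation plus the identification $p_1^{\bar{N}}=\{\theta_{n+1}\}$ (via the solidity witness in the hull) to get $\bar{N}\pins N_n$, $1$-sound with $\rho_1^{\bar{N}}=\theta_n^{+N_n}$, and an $(n{+}1)$-partial-$\Gamma$-ladder. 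All the inductive data $\Delta_{t^*,u^*}$ lie in $\rg(\varrho)$ since $\theta_n^{+N_n}<\crit(\varrho)$, so pulling back by $\varrho$ gives $\Delta\in N_n$. Only after this construction does one take a further hull to land in $M|\om_1^M$ with $\rho_\om=\om$. Your proposal inverts this order (starting from an $\om$-projecting $N$) and never specifies how the required segment $N_{n+k+1}$ is to be produced.
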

\begin{proof}
\ref{item:not_psi} $\Rightarrow$ \ref{item:T-tilde_wfd}: This follows easily from Lemma \ref{lem:Sigma_1^alphagap-correct}.

\ref{item:T-tilde_wfd} $\Rightarrow$ \ref{item:Pi_2-witness_pins_M}:
We will actually prove something stronger. For $n<\om$, let $N_n=M|\theta_n^{++}$. We say that $(n,t,u)$ is \emph{relevant} iff $n<\om$ and $(t,u)\in\wt{T}^{N_n}_x$ and $\lh((t,u))=n$. We will show that for each relevant $(n,t,u)$,
\begin{equation}\label{eqn:inductive_hyp}\exists\Delta\in N_n\ \big[(N_n,\Delta)\text{ is a }(\neg\psi(x),t,u)\text{-prewitness}\big].\end{equation}
Applying this with $n=0$ and $(t,u)=(\emptyset,\emptyset)$, we will get some $\Delta\in N_0$ such that $(N_0,\Delta)$ is a $\neg\psi(x)$-prewitness, and then by taking an appropriate hull of $N_0$,
we can get such an $N\pins M|\om_1^{M}$
with $\rho_\om^N=\om$.

For relevant $(n,t,u)$, let $\rank(n,t,u)=\rank^{\wt{T}_x^{M}}(\wt{\pi}^{N_nM}_x(t,u))$.
We will establish line (\ref{eqn:inductive_hyp})  by induction on $\rank(n,t,u)$,
simultaneously for all $n$.

First suppose $(n,t,u)$ is relevant and $\rank(n,t,u))=0$.
 So $(t,u)$ is an end-node of $\wt{T}_x^{N_n}$. Let $\Delta=\{\emptyset\}$. Then  $(N_n,\Delta)$ is a $(\neg\psi(x),t,u)$-prewitness.
 
 Now suppose that we have established line (\ref{eqn:inductive_hyp}) for all relevant $(n,t,u)$ with $\rank(n,t,u)<\alpha$, and fix a relevant
 $(n,t,u)$ with $\rank(n,t,u)=\alpha$. 
 We must show there is a system  $\Delta\in N_n$ such that $(N_n,\Delta)$ is a $(\neg\psi(x),t,u)$-prewitness.
 Given $(t',u')\in \wt{T}_x^{N_n}$ with $\lh((t',u'))=n+1$ and $(t,u)\pins(t',u')$,
 let $(t^*,u^*)=\wt{\pi}^{N_nN_{n+1}}_x((t',u'))$. 
 Then $(t^*,u^*)\in \wt{T}^{N_{n+1}}_x$,  $(n+1,t^*,u^*)$ is relevant and
   $\rank(n+1,t^*,u^*)<\alpha$.
 So by induction, we can fix (the least) $\Delta_{t^*,u^*}\in N_{n+1}$
 such that $(N_{n+1},\Delta_{t^*,u^*})$
 is a $(\neg\varphi(x),t^*,u^*)$-prewitness.
 
 Let \[\bar{N}=\cHull_1^{N_{n+1}}(\theta_n^{+N_n}\cup\{\theta_{n+1}^{N_{n+1}}\}) \]
 and let $\varrho:\bar{N}\to N_{n+1}$ be the uncollapse.
 Then $\bar{N}$ is $1$-sound with $\rho_1^{\bar{N}}=\theta_n^{+N_n}$ and $\pi(p_1^{\bar{N}})=\{\theta_{n+1}^{+N_{n+1}}\}$, and $\varrho$ is $\rSigma_1$-elementary,
 and so $\bar{N}\pins N_n$ by condensation;
 also $\varrho\in M$.
 (The identification of $p_1^{\bar{N}}$
 works because $N_{n+1}|\theta_{n+1}^{+N_{n+1}}\preccurlyeq_1 N_{n+1}$
 and $\theta_{n+1}^{+N_{n+1}}\in\rg(\varrho)$;
 this gives the corresponding $1$-solidity witness in $\rg(\varrho)$.) Note that since $\theta_n^{+N_n}<\crit(\varrho)$, we have $\wt{T}^{N_n}_x\in\bar{N}$
 and $\varrho(\wt{T}^{N_n}_x)=\wt{T}^{N_n}_x$,
 and further, $\wt{T}^{N_{n+1}}_x,\wt{\pi}^{N_nN_{n+1}}_x\in\rg(\varrho)$, and $\Delta_{t^*,u^*}\in\rg(\varrho)$ for each $(t',u')$ and   $(t^*,u^*)$ as in the previous paragraph.
 
 We now specify an appropriate $\Delta$:
  let $\Delta$ be the set of finite sequences $\sigma$ of form
 \[ \sigma=(\sigma_{n+1},\ldots,\sigma_{n+k})\]
 where $k<\om$ and for each $i\in[1,k]$,  $\sigma_{n+i}$ has form
 \[\sigma_i=(N_{n+i},t_{n+i},u_{n+i},\Delta_{n+i}),\]
  and where either $k=0$ (so $\sigma=\emptyset$)
 or $N_{n+1}=\bar{N}$
 and for some $(t',u')\in \wt{T}_x^{N_n}$ with $\lh((t',u'))=n+1$ and $(t,u)\pins(t',u')$,
 we have $\varrho(t_{n+1},u_{n+1},
 \Delta_{n+1})=(t^*,u^*,\Delta_{t^*,u^*})$
 where $t^*,u^*$ are as before,
 and
 \[(\sigma_{n+2},\ldots,\sigma_{n+k})\in \Delta_{n+1}.
 \]
Then it is straightforward to see that $(N_n,\Delta)$ is a $(\neg\varphi(x),t,u)$-prewitness, which completes the induction.

\ref{item:Pi_2-witness_pins_M} $\Rightarrow$ \ref{item:not_psi}:
Fix a $(\neg\psi(x)$-prewitness $N\pins M|\om_1^M$.
Suppose $\Ss_{\alphagap}(\RR)\sats\psi(x)$,
as witnessed by $w\in\RR$.
Since $\lh((\emptyset,\emptyset))=0$,
$N$ is a $0$-partial-$\Gamma$-ladder.
Iterate $N$ at $\theta_0^N$,
making $w$ generic over the image of $P_0(N|\theta_0)$ for the extender algebra at the image of $\theta_0$; let $\Tt_0$
be the tree. 
Then letting $g_0$ be
$\Coll(\om,\theta_0^{M^{\Tt_0}_\infty})$-generic,
there is $(t_0',u_0')\in \wt{T}_x^{M^{\Tt_0}_\infty}[g_0]$
such that $\lh((t_0',u_0'))=1$ and $t_0'=w\rest 1$
and $u_0'(0)$ matches the first norm value for $(x,w)$. Therefore, noting that $(M^{\Tt_0}_\infty,i^{\Tt_0}_\infty(\Delta))$ is a 
$(\neg\psi(x),\emptyset,\emptyset)$-prewitness, 
we can find $\sigma'\in i^{\Tt_0}_{0\infty}(\Delta)$
with $\lh(\sigma')=1$
and $\sigma'=((N',t_0^*,u_0^*,S'))$
for some $N',S'$, with $(t_0^*,u_0^*)=\wt{\pi}^{M^{\Tt_0}_\infty N'}_x((t_0',u_0'))$.
Now drop to $N'$,
and iterate $N'$ above $\theta_0^{+N'}=\theta_0^{+M^{\Tt_0}_\infty}$, making $w$ generic over the image of $P_1(N'|\theta_1^{N'})$.
Then we get some $(t_1',u_1')$
extending $i^{\Tt_1}_{0\infty}(t_0^*,u_0^*)$
with $(t_1',u_1')\in\wt{T}_x^{M^{\Tt_1}_\infty[g_1]}$
and $\lh(t_1',u_1')=2$,
and $t_1'\ins w$, and $u_1'$ giving the first two norm values for $(x,w)$. We can continue in this way for $\om$ steps, and note that this produces a tree with a unique cofinal branch which drops infinitely often (contradiction 1),
and the common part model of the tree is a $\Gamma$-ladder (contradiction 2).\end{proof}

\begin{proof}[Proof of Theorem \ref{tm:anti-correctness_om-standard}]
 Let  $\bar{\alpha}$ and $\pi:\Ss_{\bar{\alpha}}(\RR^M)\to\Ss_{\alphagap}(\RR)$ be the map given by Fact \ref{fact:Sigma_1-elem_pi}.
 
 Let $\varphi$ be $\Pi_2$.
First, we need to specify a $\Sigma_2$ formula $\psi_{\Sigma_2,\varphi}$ (recursively in $\varphi$) such that for all $x\in\RR^M$,
\[ \Ss_{\alphagap}(\RR)\sats\varphi(x)\iff \Ss_{\bar{\alpha}}(\RR^M)\sats\psi_{\Sigma_2,\varphi}(x,x_{\cof}).\]
But by Lemma \ref{lem:equivalence_coarse_countable_cof},
we can just set $\psi_{\Sigma_2,\varphi}$
to be the formula asserting ``there is a $\Pi_1$-iterable $\om$-premouse $R$ over $x_{\cof}$ which is a $\varphi(x)$-prewitness''. By the $\Sigma_1$-elementarity of $\pi$ and since all $\Pi_1$-iterable $\om$-premice $R$ over $x_{\cof}$ with $R\in\HC^M$
are segments of $M$, this works.

Secondly, we need to specify a $\Sigma_2$ formula $\varrho_{\Sigma_2,\varphi}$ such that for all $x\in\RR^M$,
\[ \Ss_{\bar{\alpha}}(\RR^M)\sats\varphi(x)\iff\Ss_{\alphagap}(\RR)\sats\varrho_{\Sigma_2,\varphi}(x,x_{\cof}).\]
For this, first let $\varphi^\RR(x,x_{\cof})$
be the standard $\Pi^\RR_2$ formula
such that \[ \Ss_{\alphagap}(\RR)\sats\all^\RR x\ [\varphi^\RR(x,x_{\cof})\Leftrightarrow\varphi(x)].\]
Now let $\varrho_{\Sigma_2,\varphi}(x,x_{\cof})$ be the formula asserting the existence of a countable $\Pi_1$-iterable premouse $N$ over $x_{\cof}$ such that:
\begin{enumerate}[label=--]\item  $\Lp_{\Gamma}(N|\eta)\ins N$ for every $\eta<\OR^N$, 
\item $N$ is a putative $\Gamma$-ladder,
and
\item $N\sats$``There is $\gamma\in\OR$ such that $\Ss_\gamma(\RR^N)\sats\all^\om n\ \varphi_{\cof}(n,x_{\cof})$,
and letting $\gamma$ be least such,
$\Ss_{\gamma}(\RR^N)\sats\varphi^\RR(x)$''.
\end{enumerate}
Because every $N$ as above has $\RR^M\sub N$, it is straightforward to see that this works.
\end{proof}

\subsection{Antichains of the extender algebra}
\label{sec:antichains}

Work in a premouse $N$
and let $\delta<\OR^N$
be such that $N\sats$``$\delta$ is Woodin''
and $\delta^{+N}<\OR^N$.
Let $\mathscr{L}$
be the collection of formulas $\varphi$ of infinitary propositional logic
in $\om$-many Boolean variables $\{X_n\}_{n<\om}$,
with $\varphi\in V_\delta$.
For $\Gamma\sub\mathscr{L}$
and $\varphi\in\mathscr{L}$,
a \emph{proof of $\varphi$
from $\Gamma$}
is a transitive  $P\sats\ZFC^-$ such that for some transitive swo'd  $X\in P$,
we have $\varphi\in X$ and some
$\Gamma'\in X$ such that $\Gamma'\sub\Gamma$
and $P\sats$ ``$\Coll(\om,X)$ forces that for every real $x$,
if $x\sats\Gamma'$ then $x\sats\varphi$''.
We say that $\Gamma$ is \emph{consistent}
iff for all $\varphi\in\mathscr{L}$,
there is no proof $P$
of $\varphi\wedge\neg\varphi$
from $\Gamma$,
and a formula $\varphi\in\mathscr{L}$ is \emph{consistent with $\Gamma$}
if $\Gamma\cup\{\varphi\}$ is consistent.

Let $\Gamma_0$ be the set of all extender algebra axioms for the extender algebra $\BB_\delta$ at $\delta$ (induced by extenders $E\in\es^{N|\delta}$ with $\nu(E)$ an $N$-cardinal, in the usual way). Clearly $\Gamma_0$ is consistent, since the constantly $0$ real gives models.
As in the proof of the $\delta$-cc for the extender algebra (using the Woodinness of $\delta$),
for any $\varphi\in\mathscr{L}$,
there is a proof $P$
of $\varphi$ from $\Gamma_0$
iff
there is $\Gamma'\in V_\delta$ such that $\Gamma'\sub\Gamma_0$, and  a proof $P'\in V_\delta$ of $\varphi$ from $\Gamma'$. Recall that the extender algebra $\BB_\delta$ consists of all equivalence classes $[\varphi]$ of formulas $\varphi\in \mathscr{L}$
which are consistent with $\Gamma_0$,
modulo the equivalence relation $\approx$, where $\varphi\approx\psi$
iff there is a proof of $\varphi\Leftrightarrow\psi$
from $\Gamma_0$. As  $\delta$ is Woodin,  $\BB_\delta$ has the $\delta$-cc.

Let $\varphi\in\mathscr{L}$.
Then the following are equivalent:
\begin{enumerate}[label=--]\item $[\varphi]\in\BB_\delta$
\item  there is no limit cardinal
$\eta<\delta$
such that $\varphi\in N|\eta$ and $N|\eta\sats$``there is a proof of $\neg\varphi$ from $\Gamma'\cup\{\varphi\}$ from some set $\Gamma'$, with $\Gamma'\sub\Gamma_0^{P|\eta}$'',
where $\Gamma_0^{P|\eta}$ is the set of extender algebra axioms induced by extenders $E\in\es^{N|\eta}$,
\item 
 there is $\eta<\delta$ 
such that $N|\eta\preccurlyeq N|\delta$ and $\varphi\in N|\eta$, and $N|\eta\sats$``there is no such proof''.
\end{enumerate}
So we can uniformly determine whether a given $\varphi\in\mathscr{L}$ yields an element $[\varphi]$ of $\BB_\delta$ in any $\eta<\delta$
with $\varphi\in N|\eta\preccurlyeq N|\delta$.
It follows that the ordering $[\varphi]\leq[\psi]$ is likewise definable, since if $[\varphi],[\psi]\in \BB_\delta$, then  $[\varphi]\leq[\psi]$ iff $[\varphi\wedge\neg\psi]\notin\BB_\delta$. The
 compatibility of $[\varphi]$ with $[\psi]$ is likewise so definable.

Now for $A\sub\mathscr{L}$, let $[A]=\{[\varphi]\bigm|\varphi\in A\}$.
Say that $A$ is \emph{representing} iff for all $\varphi,\psi\in A$,
if $\varphi\neq\psi$
then $[\varphi]\neq[\psi]$.
By the previous paragraph, the questions of whether
$A$ is representing and  whether $[A]$ is an antichain can be similarly determined, over any $N|\eta\preccurlyeq N|\delta$ with $A\in N|\eta$; moreover, if $A$ is representing and $[A]$ is an antichain, then $A\in N|\delta$, by the $\delta$-cc. The question of whether ($A$ is representing and) $[A]$ is a maximal antichain is likewise,
since if $A$ is representing then $[A]$ is maximal
iff $[\neg(\bigvee A)]\notin\BB_\delta$.

\subsection{Q-local local $K^c$-constructions}\label{sec:Q-local_local_K^c}

Let $P$ be an $x$-premouse which has no largest cardinal, where $x\in\RR$. Let $y\leq_Tx$.
Working in $P$, we define the \emph{maximal Q-local-plus-1 certified ms-array over $y$}, by combining features of the local $K^c$-construction of \cite{localKc} with the Q-local $L[\es]$-construction of \cite{mouse_scales}. The main features are as follows.
For simplicity assume $y=0$; the general case is just a relativization (since $y\leq_T x$).

The structure of the construction is basically that of the maximal $+1$ certified ms-array of $P$ (see \cite[p.~5***]{localKc}), except as follows.
We define a \emph{Q-local-plus-1 certificate}
as in the definition of \emph{plus-1
certificate} (\cite[Definition 1.3***]{localKc}) except for the following modifications:
\begin{enumerate}
\item A Q-local-plus-1 certificate
for an ms-array $\left<N_\gamma\right>_{\gamma\leq\eta_0}$ (the ms-array is the sequence of uncored models of the construction) is a triple \[(\left<H_\gamma\right>_{\gamma<\eta_0},\left<F^*_\gamma\right>_{\gamma\leq\eta_0},\left<t_\gamma\right>_{\gamma\leq\eta_0}).\]
\item For each $\gamma\leq\eta_0$, we have $t_\gamma\in\{0,1,3\}$.
\item If $t_\gamma\neq 3$, then $N_\gamma$ is active iff $F^*_\gamma\neq\emptyset$ iff $t_\gamma=1$.
 \item We demand that $\nu(F^*_\gamma)=\aleph_\gamma^{H_\gamma}$, for all $\gamma<\eta_0$
 with $t_\gamma=1$.
 \item We demand that if $t_{\eta_0}=1$
 then $\nu(F^*_{\eta_0})=\aleph_{\eta_0}^{P}$.
 \item We demand that $\OR^{H_{\gamma_0}}<\OR^{H_{\gamma_1}}$ for $\gamma_0<\gamma_1<\eta_0$.
 \item We demand that $\OR^{H_\gamma}<\OR^P$ for $\gamma<\eta_0$.
 \item We set $t_\gamma=3$ iff $N_{\gamma}\sats$``There is a Woodin cardinal''.
 \item Suppose $t_{\gamma_0}=3$,
 and let $\delta$ be the least Woodin cardinal of $N_{\gamma_0}$.
 Then $\OR(N_{\delta})=\delta$
 and $N_{\delta}\ins N_{\gamma_0}$.
 Thus, $t_{\delta+1}=3$. Moreover,
 $P|\delta\sats\ZFC$ and $P|(\delta+\om)\sats$``$\delta$ is the least Woodin''.
 Let $Q$ be the largest segment of $P$ which models ``$\delta$ is Woodin''. Then for $\delta+(\gamma\om)\in(\delta,\OR^Q]$,
 $t_{\delta+\gamma}=3$ and
 $N_{\delta+\gamma}$ is the output of the P-construction of $Q|(\delta+\gamma\om)$ over $N_{\delta}$.
 This makes sense, and
 if $P\sats$``$\delta$ is Woodin'',
 we get $N_{\delta+\gamma}\sats$``$\delta$ is Woodin'' where $\delta+\gamma\om=\OR^P$, whereas if
  $P\sats$``$\delta$ is not Woodin'', we get that $N_{\delta+\gamma}$ is a $\delta$-sound Q-structure for $\delta$ where $\delta+\gamma\om=\OR^Q$ (assuming iterability and sufficient soundness of $P$).
\end{enumerate}
Other than these changes, things are as in \cite[Definition 1.3]{localKc}. We then define the \emph{maximal Q-local-plus-1 certified ms-array} as in \cite[p.~5]{localKc},
but using the above certificates
in place of plus-1 certificates.
Note that this implies that for limit ordinals $\eta$
such that $\kappa_\alpha$ is not eventually constant as $\alpha\to\eta$, if the resulting model $N_{\gamma}$ eventually produced by this construction (so $\OR(N_{\gamma})=\kappa_\eta$
and $N_{\gamma}$ has no largest cardinal) is such that $N_{\gamma}\sats\ZFC$ and $\Jj(N_{\gamma})\sats$``$\delta=\kappa_\eta$ is Woodin'',
then the next stage of the construction (producing segments projecting to $\delta$) has to begin with P-construction, through to either $P$ itself  (if $\delta$ is (the least) Woodin of $P$) or the Q-structure $Q\pins P$ for $\delta$
 (otherwise). One should refer to \cite{mouse_scales} for more details regarding this aspect of the construction.

\subsection{The mouse set theorem for $\OD_{\alpha 2}$}
The analysis in \S\ref{sec:coarse_countable_cof} 
for the $\Ss$-gap $[\alphagap,\alphagap]$ was relative to the real $x_{\cof}$.
We next adapt the  methods
to establish the (lightface) mouse set theorem 
with respect to $\OD_{\alphagap 2}$ and $\OD^\RR_{\alphagap 2}$ (for the same kinds of $\Ss$-gaps as in \S\ref{sec:coarse_countable_cof}).

\begin{dfn}
 Recall that for $\alpha\in\OR$, $n\in[1,\om)$
 and $x\in\RR$,
 $\OD_{\alpha n}(x)$ is the set of reals $y$
 such that for some ordinal $\xi<\om_1$
 and some $\Sigma_n$ formula $\varphi$
 of the $L(\RR)$ language,
 for all $w\in\WO_\xi$
 and all $z\in\RR$, we have
 \[ z=y\iff\Ss_\alpha(\RR)\sats\varphi(x,\xi,z).\]
 And $\OD_{\alpha n}=\OD_{\alpha n}(\emptyset)$.
\end{dfn}

\begin{dfn}
 Recall that 
 $\Sigma_1^\RR=\Sigma_1$,
 $\Pi_1^\RR=\Pi_1$,
 and given $n\geq 1$,
 $\Sigma^\RR_{n+1}=\exists^\RR\Pi_n^\RR$
 and $\Pi^\RR_{n+1}=\neg\all^\RR\Sigma_n^\RR$.
 
 For $\alpha\in\OR$, $n\in[1,\om)$ and $x\in\RR$, $\OD^\RR_{\alpha n}(x)$ is 
 the set of reals $y$
 such that for some ordinal $\xi<\om_1$
 and some $\Sigma^\RR_{n}$ formula $\varphi$
 of the $L(\RR)$ language,
 for all $w\in\WO_\xi$ and
  all $z\in\RR$, we have
 \[ z=y\iff\Ss_\alpha(\RR)\sats\varphi(x,\xi,z).\]
 And $\OD^\RR_{\alpha n}=\OD^\RR_{\alpha n}(\emptyset)$.
\end{dfn}

One should be aware of the potential contrast of the preceding definitions with the next one:
\begin{dfn}
  Given $x,y\in\RR$, we say $y$ \emph{is $\Sigma_n^{\Ss_\alpha(\RR)}(\om_1\cup\{x\})$} 
 iff there is $\xi<\om_1$ and a $\Sigma_n$ formula $\varphi$ such that for all $w\in\WO_\xi$ and all $n<\om$, we have
 \[ n\in y\iff \Ss_\alpha(\RR)\sats\varphi(x,\xi,n).\]
 Likewise define \emph{$y$ is $\Pi_n^{\Ss_\alpha(\RR)}(\om_1)$}.
 And $y$ \emph{is $\Delta_n^{\Ss_\alpha(\RR)}(\om_1)$} iff $y$ is both $\Sigma_n^{\Ss_\alpha(\RR)}(\om_1)$
 and $\Pi_n^{\Ss_\alpha(\RR)}(\om_1)$.
\end{dfn}

\begin{rem}
 Clearly, if $y\in\OD_{\alpha n}$,
 then $y$ is $\Delta_n^{\Ss_\alpha(\RR)}(\om_1)$. If $\alpha$ has uncountable cofinality in $L(\RR)$, then the converse also holds, but
 it doesn't seem that the converse should hold in general.
\end{rem}

 \begin{dfn}
 Say that a real $y$ is $(\alphagap,\RR)$-\emph{cofinal}
 if there is a $\Sigma_1$ formula $\varphi$
 of the $L(\RR)$ language
 and such that $\alphagap$
 is the least ordinal $\alpha$ such that $\Ss_{\alpha}(\RR)\sats\all^\RR x\ \varphi(x,y)$.
\end{dfn}

 \begin{proof}[Proof of Theorem \ref{tm:lightface_mouse_set} for $n=2$]
Fix an $\alphagap$-ascending pair
$(x_{\cof},\varphi_{\cof})$,
 let $\Gamma=\Sigma_1^{\Ss_{\alphagap}(\RR)}$
 and $N=M_{\ladder}^{\Gamma}(x_{\cof})$
 (see \S\ref{sec:coarse_countable_cof}).
 Let $M$ be the output of the Q-local local $K^c$-construction of $N$ \tu{(}over $\emptyset$\tu{)} (see \S\ref{sec:Q-local_local_K^c}).
 We will see that  $M$ witnesses the theorem.

  Let us first establish some general facts about $M$. By \S\ref{sec:Q-local_local_K^c} (and adapting further material from \cite{localKc} and \cite{mouse_scales}),
  for each $n<\om$, since $\rung_n^N$
  is a limit cardinal of $N$,
  $M|\rung_n^N=N_{\rung_n^N}$,
  and since $Q_n^N\sats$``$\rung_n^N$ is Woodin'', $M|\OR^{Q_n^N}$ is the P-construction of $Q_n^N$ over $M|\rung_n^N$.
  Write $Q_n^M=Q_n^N$ (though $Q_n^M$
  need not be defined internally to $M$
  in the manner it is defined in $N$).
  Let $\Sigma^N_n$ be the above-$\rung_n^N$,
  $(\om,\om_1)$-strategy for $Q_n^N$,
  and let $\beta_n^N$ be the least $\beta$
  such that $\Sigma^N_n\in\Ss_{\beta}(\RR)$.
  So $\sup_{n<\om}\beta_n^N=\alphagap$.
  Let $\Sigma^M_n$ be the translation of $\Sigma^N_n$ to an above-$\rung_n^N$,
  $(\om,\om_1)$-strategy
  for $Q_n^M$ (so $\Sigma^M_n$ and $\Sigma^N_n$ are essentially equivalent). Note $M$ has no largest cardinal, and $M\sats$``there is no Woodin cardinal'' (since $N$ has no Woodin, and by the properties of the construction; for example, $Q_n^M$ is the Q-structure for $M|\rung_n^N$).
  
  Now each rung $\theta_n^N$ is a strong cutpoint of $N$, by the $\Sigma_1^N$-definability of the $\Lp_{\Gamma}$-operator.
  It follows that $\theta_n^N$ is  a cutpoint of $M$, and that $\theta_n^N$ is not measurable in $M$ (though it seems that there might be partial measures $E\in\es^M$ with $\crit(E)=\theta_n^N$).
  It follows that for each $m<\om$, $\theta_n^{(+m+1)M}$ is a strong cutpoint of $M$.

  Since $M,N$ each have no largest cardinal, $\rho_1^M=\rho_1^N=\OR^M=\OR^N$.
  Note that this also implies that $0$-maximal trees on these models are equivalent to $1$-maximal trees  (all $\bfrSigma_1^M$ functions $f$ with domain in $M$, are such that $f\in M$; likewise for $N$). So $N$ is $(1,\om_1+1)$-iterable. All iteration trees on $M,N$ we consider will be $1$-maximal.+
  Let $\Psi_M$ be $(1,\om_1+1)$-strategy 
  for $M$ given by lifting/resurrection to $1$-maximal trees on $N$ via $\Sigma_N$.
  Given $\Tt$ on $M$ via $\Psi_M$,
  let $\Uu_\Tt$ be the corresponding tree on $N$, and given $\alpha<\lh(\Tt)$ and $d=\deg^\Tt_\alpha$,
  let $\xi_\alpha^\Tt\leq\OR(M^{\Uu_\Tt}_\alpha)$
  and $\pi_\alpha^\Tt:M^\Tt_\alpha\to \core_d(N_{\xi_\alpha}^{M^{\Uu_\Tt}_\alpha})$ be the standard lifting map. (In particular, $\xi^\Tt_0=\OR^N$, $\deg^\Tt_0=1$
  and $\pi^\Tt_0:M\to \core_1(M)=M$ is the identity.)

  \begin{clm}\label{clm:no_dropping_ladder}
  Let $\Tt$ be a tree on $M$ via $\Psi_M$,
  of successor length. Let $P\ins M^\Tt_\infty$. Then:
  \begin{enumerate}
   \item\label{item:no_Gamma-Woodin} There is no $\delta\leq\OR^P$
   such that $\Lp_{\Gamma}(P|\delta)$ is a premouse and $\Lp_{\Gamma}(P)\sats$``$\delta$ is Woodin''.
   \item\label{item:no_dropping_ladder} The following are equivalent:
   \begin{enumerate}[label=\tu{(}\roman*\tu{)}]\item For all $\alpha<\alphagap$
   there is a $P$-cardinal $\delta<\OR^P$ such that $\Lp_{\Gamma_\alpha}(P|\delta)$ is a premouse and $\Lp_{\Gamma_\alpha}(P|\delta)\sats$``$\delta$ is Woodin'',
   \item $b^\Tt$ is non-dropping, and $P=M^\Tt_\infty$.
   \end{enumerate}
  \end{enumerate}
\end{clm}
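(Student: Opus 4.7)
The plan is to reduce everything about $M$ and its $\Psi_M$-iterates back to the $\Gamma$-ladder $N$ via the lifting/resurrection strategy, using two structural features of $N$: (a) $N$ itself has no $\delta\leq\OR^N$ such that $\Lp_\Gamma(N|\delta)$ is a premouse modelling ``$\delta$ is Woodin'', since at each rung $\theta_n^N$ only the partial Q-structure $P_n(N|\theta_n^N)$ sees Woodinness and its strategy lives in $\Ss_{\alpha_n}(\RR)\psub\Ss_\alphagap(\RR)=\Gamma$, so $\Lp_\Gamma(N|\theta_n^N)=\stack_{m<\om}P_m(N|\theta_n^N)\not\sats$``$\theta_n^N$ is Woodin''; and (b) the rungs $\theta_n^N$ nevertheless provide a cofinal sequence of $\Gamma_{\alpha_n+1}$-Woodin cutpoints in $N$, hence in $M$, with $\sup_n(\alpha_n+1)=\alphagap$. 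These properties are preserved under copying via $\Psi_M$, using Remark \ref{rem:pres_P_n(X)_under_hull,it_map} and the fact (from \S\ref{sec:Q-local_local_K^c}) that the $K^c$-construction preserves cutpoint and Woodin information through P-construction.

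For part \ref{item:no_Gamma-Woodin}, suppose for contradiction that $P\ins M^\Tt_\infty$ and $\delta\leq\OR^P$ witness its negation. Then $\Lp_\Gamma(P|\delta)\ins M^\Tt_\infty$ is a premouse with $\delta$ Woodin in it. Apply the lifting map $\pi^\Tt_\infty$ (or its appropriate restriction if a drop intervenes) to carry $\Lp_\Gamma(P|\delta)$ into a $\Gamma$-Woodin segment of $M^{\Uu_\Tt}_\infty$; by the preservation of $\Lp_\Gamma$ under $\Sigma_1^{\Ss_\alphagap(\RR)}$ maps and under $\Sigma_N$-iteration, this contradicts feature (a) applied to the iterate of $N$.

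For part \ref{item:no_dropping_ladder}, the direction (ii)$\Rightarrow$(i) is immediate: if $b^\Tt$ does not drop and $P=M^\Tt_\infty$, the main embedding $j:M\to P$ sends each $\Gamma_{\alpha_n+1}$-Woodin cutpoint $\theta_n^N$ of $M$ to one of $P$, and cofinality of $\alpha_n+1$ in $\alphagap$ gives (i). For (i)$\Rightarrow$(ii), suppose for contradiction that (i) holds but either $b^\Tt$ drops or $P\pins M^\Tt_\infty$. Then using the lifting/resurrection apparatus, $P$ is lift-embedded into a proper segment $N^*$ of (the last drop-model of) $M^{\Uu_\Tt}_\infty$, which is itself a proper segment of an $\Sigma_N$-iterate of $N$. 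Such $N^*$ is a $k$-partial-$\Gamma$-ladder for some finite $k$, so all of its $\Gamma_\alpha$-Woodin cutpoints come from the finitely many rungs $\theta_i^{N^*}$ ($i\leq k$) whose $P_i$-strategies live in $\Ss_{\alpha_i}(\RR)$; hence $N^*$, and via pullback $P$, can only realize $\Gamma_\alpha$-Woodin cutpoints for $\alpha\leq\max_{i\leq k}(\alpha_i+1)<\alphagap$, contradicting (i).

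The main obstacle will be the careful bookkeeping translating $\Lp_\Gamma$- and $\Lp_{\Gamma_\alpha}$-structure between segments of $M^\Tt_\infty$ and segments of $M^{\Uu_\Tt}_\infty$ through $\pi^\Tt_\infty$, verifying that P-construction commutes with the identification of the $P_n$-operators on both sides, and handling the bookkeeping when $\Tt$ or $\Uu_\Tt$ drops in model or degree so that the ``proper segment or drop'' case really does localize $P$ inside a \emph{bounded-rung} piece of the $N$-side. Once this translation is in place, the clash between the cofinal-in-$\alphagap$ sequence demanded by (i) and the finite-rung upper bound on an $N^*$ as above is the clean combinatorial core of the argument.
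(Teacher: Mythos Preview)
Your outline for part~\ref{item:no_dropping_ladder} is essentially the paper's strategy, and the bounded-rungs argument in the contrapositive of (i)$\Rightarrow$(ii) is the right idea (though a proper segment on the $N$-side need not literally be a $k$-partial-$\Gamma$-ladder in the sense of the earlier definition; what matters is only that it sits below some rung $\theta_k$ of the $N$-iterate, so that every Q-structure inside it has an above-strategy in $\Ss_{\beta_k}(\RR)$ for a fixed $\beta_k<\alphagap$).

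For part~\ref{item:no_Gamma-Woodin}, however, your argument runs in the wrong direction and has a genuine gap. You propose to push $\Lp_\Gamma(P|\delta)$ forward via $\pi^\Tt_\infty$ and contradict feature~(a) on the $N$-side. But $\Lp_\Gamma(P|\delta)$ is an externally defined object: there is no reason it should be a segment of $M^\Tt_\infty$, hence no reason it lies in the domain of $\pi^\Tt_\infty$, and ``preservation of $\Lp_\Gamma$ under $\pi^\Tt_\infty$'' is not something you have available here (indeed, establishing $\Lp_\Gamma(R|\theta)\ins R$ in the relevant cases is the content of the \emph{next} claim in the paper, which depends on this one). The paper instead works in the opposite direction. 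It first uses the fact, established just before the claim, that $M$ itself has no Woodin cardinals; this guarantees that for $\delta<\OR^P$ there is already a Q-structure $Q\ins P$ for $P|\delta$ sitting inside the iterate. One then lifts $Q$ (a genuine segment, hence in the domain of $\pi^\Tt_\infty$) to $Q'$ on the $N$-side, observes that $Q'$ arises by P-construction from a segment of the $N$-iterate and therefore inherits an above-$\delta'$ strategy in $\Ss_{\alphagap}(\RR)$, and pulls that strategy back along $\pi^\Tt_\infty$ to conclude $Q\pins\Lp_\Gamma(P|\delta)$---contradicting Woodinness of $\delta$ there. The case $\delta=\OR^P$ is handled separately, using that $\OR^M$ is a limit of strong cutpoints. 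You should restructure part~\ref{item:no_Gamma-Woodin} around this Q-structure-first approach.
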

\begin{proof}
 Part \ref{item:no_Gamma-Woodin}: Suppose otherwise. In particular, $P|\delta\sats\ZFC$. So if $\delta=\OR^P$ then $P\sats\ZFC$ and $b^\Tt$ does not drop,
 but then $\J(P)\sats$``$\delta$ is not Woodin'', since
 $\OR^M$ is a limit of strong cutpoints of $M$,
 and hence likewise for $\OR^P$ and $P$.
 So $\delta<\OR^P$. Since $M\sats$``there is no Woodin'',
 there is $Q\ins P$ which is a Q-structure for $P|\delta$. We have $\pi^\Tt_\infty:M^\Tt_\infty\to N_{\xi^\Tt_\infty}^{M^{\Uu_\Tt}_\infty}$.
 Let $(\delta',Q')=\pi^\Tt_\infty(\delta,Q)$, where $Q'=N_{\xi^\Tt_\infty}^{M^{\Uu_\Tt}_\infty}$
 if $Q=P$. Then $Q'\pins\Lp_\Gamma(Q'|\delta')$,
 since $Q'$ is just the output of the P-construction of $M^{\Uu_\Tt}_\infty|\OR^{Q'}$
 over $Q'|\delta'$,
 and $M^{\Uu_\Tt}_\infty|\OR^{Q'}\pins\Lp_\Gamma(M^{\Uu_\Tt}_\infty|\delta')$.
 But then using $\pi^\Tt_\infty$,
 we can pull back the above-$\delta'$ strategy
 for $Q'$,
 giving an above-$\delta$ strategy for $Q$
 which is in $\Ss_{\alphagap}(\RR)$,
 so $Q\pins\Lp_\Gamma(Q|\delta)$,
 as desired.
 
 Part \ref{item:no_dropping_ladder}:  This follows from similar considerations as the previous part.
\end{proof}

\begin{clm}
 $\RR\cap M\sub\OD^\RR_{\alphagap 2}$.
\end{clm}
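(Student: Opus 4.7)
The plan is to produce, for each $y \in \RR \cap M$, a $\Sigma_2^\RR$-definition of $y$ over $\Ss_\alphagap(\RR)$ from a single countable ordinal parameter, paralleling the construction of $\psi_{\Sigma_2,\varphi}$ in Theorem \ref{tm:anti-correctness_om-standard} but now lightface. I would fix $y$ and extract, via a standard $\Sigma_1$-hull construction inside $M$, a countable, sound, pointwise-definable proper segment $P \pins M|\om_1^M$ with $y \in P$ and $\rho_\om^P = \om$. Let $\xi < \om_1$ be the position of $y$ in the canonical enumeration of $\RR \cap P$. Being a proper segment of $M$, such $P$ is automatically $\Gamma$-ladder-small: by part \ref{item:no_Gamma-Woodin} of Claim \ref{clm:no_dropping_ladder} (applied to the trivial tree), no $P' \ins P$ can be a $\Gamma$-ladder, since that would produce an internal $\delta \leq \OR^{P'}$ with $\Lp_{\Gamma}(P'|\delta)\sats$``$\delta$ is Woodin''.

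Second, I would verify that $P$ is $\Pi_1^{\alphagap}$-iterable. Given any $\Ss_\alpha(\RR)$-guided putative $n$-maximal tree $\Tt \in \HC$ on $P$ for some $\alpha < \alphagap$, lifting $\Tt$ through $P \ins M$ via $\Psi_M$ produces a lifted tree $\Uu$ on $N$; the Q-structure witnessing $\Ss_\alpha(\RR)$-guidance corresponds to one of the structures $Q_n^M$ whose translated partial strategy $\Sigma_n^M$ lies in $\Ss_{\beta_n^N}(\RR) \sub \Ss_\alphagap(\RR)$, and running this partial strategy delivers both wellfoundedness and, at limit stages, the unique cofinal branch picked out by $Q$. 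A comparison argument of the same flavour as that used for $M^\Gamma_\ladder(X)$ in Section \ref{sec:coarse_countable_cof} then gives the critical converse: every countable sound $\Gamma$-ladder-small $\Pi_1^{\alphagap}$-iterable premouse $P'$ with $\rho_\om^{P'} = \om$ is a proper segment of $M|\om_1^M$. Consequently $y$ is uniquely characterized as the $\xi$-th real of any such $P'$ whose real-enumeration reaches position $\xi$.

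The candidate $\Sigma_2^\RR$-formula $\varrho(z,w)$ defining $y$ (for $w \in \WO_\xi$) then asserts that there exists a real $w'$ coding a countable sound $\Gamma$-ladder-small $\Pi_1^{\alphagap}$-iterable premouse $P'$ with $\rho_\om^{P'} = \om$, having $z$ as its $\ot(w)$-th real. The outer quantifier $\exists^\RR w'$ supplies the $\exists^\RR$ of $\Sigma_2^\RR = \exists^\RR \Pi^\RR_1$; the remaining predicate on $(z, w, w')$ must fit into $\Pi^\RR_1 = \Pi_1$ over the $L(\RR)$ language.

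The main obstacle is precisely this last complexity check. $\Pi_1^{\alphagap}$-iterability, read literally, carries a $\forall Q \exists b$ alternation (universal over candidate $\delta(\Tt)$-sound Q-structures for $M(\Tt)$, existential over $\Tt$-cofinal branches $b$ with $Q \ins \M^\Tt_b$), so is prima facie $\Pi_2$. To collapse this to $\Pi_1$, I would exploit the uniqueness of $b$ given $Q$ together with internality of $b$ to $\Ss_\alphagap(\RR)$ whenever it exists, rewriting the clause as a single universal assertion about candidate pairs $(Q, b)$. A parallel collapse is needed for $\Gamma$-ladder-smallness, whose unfolding mentions $\Lp_{\Gamma_\alpha}$ for arbitrary $\alpha < \alphagap$; this must be expressed without invoking the cofinal sequence $\left<\alpha_n\right>_{n<\om}$ or the parameter $x_\cof$, in order to keep the definition lightface. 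Once these bookkeeping reductions succeed, the claim follows from the characterization of $y$ established above.
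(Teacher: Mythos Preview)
Your overall strategy is sound and the comparison argument via Claim \ref{clm:no_dropping_ladder} is the right engine, but you have chosen a harder route than necessary, and the obstacle you identify with $\Gamma$-ladder-smallness is genuine and unresolved. ``$N'$ is not a $\Gamma$-ladder'' unwinds to a $\Sigma_1$ assertion over $\Ss_\alphagap(\RR)$ (exists $\alpha$ killing all would-be rungs, or exists a $\Lp_\Gamma$-failure), so ladder-smallness becomes $\forall N'\ins P'\,[\Sigma_1(N')]$; since $\cof^{L(\RR)}(\alphagap)=\om$, the $\Sigma_1$ witnesses for the countably many $N'$ may be cofinal in $\alphagap$, and there is no evident collapse to $\Pi_1$.

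The paper avoids this entirely by dropping ladder-smallness and instead pinning down the \emph{ordinal height}: fix $R\pins M|\om_1^M$ with $y\in R$ and $\rho_\om^R=\om$, and argue that $R$ is the unique $\Pi_1^{\alphagap}$-iterable $\om$-premouse $R'$ with $\OR^{R'}=\OR^R$. If $R'\neq R$, compare them (using $\Psi_M$ on the $R$ side, $\Pi_1^{\alphagap}$-iterability on the $R'$ side); neither side can end as a proper initial segment of the other, so the comparison must produce a tree on $R$ which, viewed as a dropping tree on $M$, violates Claim \ref{clm:no_dropping_ladder}. The defining $\Sigma_2^\RR$ formula for $y$ then needs only $\Pi_1^{\alphagap}$-iterability together with an ordinal-height clause, both manifestly $\Pi_1$, and the countable-ordinal parameter encodes $\OR^R$ together with the position of $y$ in $R$.

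Two smaller points. Your citation of part \ref{item:no_Gamma-Woodin} of Claim \ref{clm:no_dropping_ladder} for ladder-smallness of $P$ is off: that part rules out $\Lp_\Gamma$-Woodins, not rungs; part \ref{item:no_dropping_ladder} is what actually excludes a ladder inside a proper segment. And your worry about the $\forall Q\,\exists b$ alternation in $\Pi_1^{\alphagap}$-iterability is overstated: the branch $b$ is coded by a real and its existence given $(\Tt,Q)$ is $\Sigma^1_1$, hence $\Delta_1$ over $\Ss_\alphagap(\RR)$, so no genuine quantifier alternation arises there.
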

\begin{proof}
Using Claim \ref{clm:no_dropping_ladder}, this is just the direct analogue
of Rudominer's result that $\RR^{M_{\ladder}}\sub\OD^\RR_{\om 2}$. That is,
 let $R\pins M|\om_1^M$ with $\rho_\om^R=\om$. Certainly $R$ is $\Pi_1^{\alphagap}$-iterable,
 so it is enough to see that $R$ is the unique $\om$-premouse $R'$ such that $\OR^{R'}=\OR^R$ and $R$ is $\Pi_1^{\alphagap}$-iterable. But if not, then  comparison of $R$ with $R'$ via the correct (partial) strategies produces a tree on $R$ which violates
  Claim \ref{clm:no_dropping_ladder}.
\end{proof}

  So it just remains to see that $\OD_{\alphagap 2}\sub M$. For this, we consider two cases.

 \begin{case}\label{case:no_alphagap,R-cofinal_real} There is no $(\Tt,R,\lambda,g,y)$ such that $\Tt$ is a successor length tree  on $M$ via $\Psi_M$, $b^\Tt$ is non-dropping, $R=M^\Tt_\infty$,
   $\lambda<\OR^R$, $g$  is $(R,\Coll(\om,\lambda))$-generic,
and $y\in R[g]$ is an
$(\alphagap,\RR)$-cofinal real.

In this case, we
will show 
$\OD_{\alphagap 2}=
\OD_{\alphagap 1}=\OD^{<\alphagap}=\RR\cap M$.

\begin{clm}\label{clm:R|theta^+=Lp(R|theta)}
Let $\Tt$ be a successor length tree on $M$ via $\Psi_M$ such that $b^\Tt$ does not drop. Let $R=M^\Tt_\infty$. Then
$R|\theta^{+R}=\Lp_{\Gamma}(R|\theta)$
for each strong cutpoint $\theta$ of $R$.
\end{clm}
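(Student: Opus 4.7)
The plan is to prove the two inclusions $R|\theta^{+R}\sub\Lp_\Gamma(R|\theta)$ and $\Lp_\Gamma(R|\theta)\sub R|\theta^{+R}$ separately. The forward inclusion is a fullness fact about Q-local local $K^c$-outputs transferred along non-dropping $\Psi_M$-iterations, and the reverse is a universality fact established by coiteration above $\theta$, with Claim \ref{clm:no_dropping_ladder}(\ref{item:no_Gamma-Woodin}) as the decisive obstruction.

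For $R|\theta^{+R}\sub\Lp_\Gamma(R|\theta)$, I would fix a sound $S\pins R$ with $\theta<\OR^S<\theta^{+R}$ and $\rho_\om^S\leq\theta$, and produce an above-$\theta$ $(\om,\om_1)$-iteration strategy for $S$ lying in $\Ss_\alphagap(\RR)$. The idea is to push $S$ forward along the lift $\pi^\Tt_\infty:R\to\core_1(N^{M^{\Uu_\Tt}_\infty}_{\xi^\Tt_\infty})$. The image sits inside the Q-local local $K^c$-output of $M^{\Uu_\Tt}_\infty$, above the strong cutpoint $\pi^\Tt_\infty(\theta)$, and by the design of the construction it is obtained via P-construction over a layer in the stack of Q-structures $Q^{M^{\Uu_\Tt}_\infty}_n$. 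Each such layer inherits an above-rung iteration strategy from $\Sigma^N_n\in\Ss_{\beta_n^N}(\RR)\sub\Ss_\alphagap(\RR)$ via the tail of $\Sigma_N$, and standard copying back through $\pi^\Tt_\infty$ yields the desired strategy for $S$.

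For the reverse inclusion, I would take sound $S\pins\Lp_\Gamma(R|\theta)$ with $\rho_\om^S\leq\theta$ and $\OR^S>\theta$, iterable via some $\Sigma\in\Ss_\alphagap(\RR)$, and coiterate $S$ against $R$ above $\theta$ using $\Sigma$ and the tail of $\Psi_M$. Since $\theta$ is a strong cutpoint of $R$, all extenders used have critical point above $\theta$; by the forward inclusion already established, the relevant proper segments of $R|\theta^{+R}$ and of $\Lp_\Gamma(R|\theta)$ admit $\Lp_\Gamma$-Q-structures, which serve as branch-guides at limit stages of the comparison trees $\Uu$ on $S$ and $\Tt'$ on $R$. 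Standard arguments then give that neither side drops, the comparison terminates with $S^*=M^\Uu_\infty$, $R^*=M^{\Tt'}_\infty$, and $S^*\ins R^*$. The main obstacle is to conclude that $\Uu$ and $\Tt'$ are trivial on the relevant range, so that $S^*=S$ and $S\pins R|\theta^{+R}$. If either tree were nontrivial, then the first point of deviation, together with the Q-structure used to guide the branch on the $R$-side, would yield a $\delta$ such that $\Lp_\Gamma(R^*|\delta)\sats$``$\delta$ is Woodin'' inside the non-dropping $\Psi_M$-iterate $R^*$ of $M$, directly contradicting Claim \ref{clm:no_dropping_ladder}(\ref{item:no_Gamma-Woodin}). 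Hence both trees are trivial, and $S\pins R|\theta^{+R}$ as required.
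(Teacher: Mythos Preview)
Your plan has genuine gaps in both directions.

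For the inclusion $R|\theta^{+R}\sub\Lp_\Gamma(R|\theta)$, you never invoke the hypothesis of Case~\ref{case:no_alphagap,R-cofinal_real} (that no collapse-generic extension of a non-dropping $\Psi_M$-iterate contains an $(\alphagap,\RR)$-cofinal real), yet this is exactly where the content lies. Your assertion that the pushed-forward segment ``is obtained via P-construction over a layer in the stack of Q-structures $Q_n^{M^{\Uu_\Tt}_\infty}$'' is false in general: the Q-local local $K^c$-construction performs P-construction only on the intervals $[\theta_n^N,\OR(Q_n^M)]$, and between $(\theta_n^N)^{+M}$ and $\theta_{n+1}^N$ it reverts to ordinary certified $K^c$. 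A segment $S$ projecting to a strong cutpoint in that range inherits its above-$\theta$ strategy by lifting to $N$'s full strategy, which is \emph{not} in $\Ss_\alphagap(\RR)$. The paper instead runs a reflection argument: given $P\pins M$ with $\rho_\om^P\leq\theta$ and $\Lp_\Gamma(M|\theta)\pins P$, the statement ``no sound $P'\neq P$ with the same base and $\rho_\om^{P'}\leq\theta$ is above-$\theta$ $\Pi_1$-iterable'' is $\forall^\RR\Sigma_1$ over $\Ss_\alphagap(\RR)$ in a real coding $(P,\theta)$; the Case hypothesis forces this to already hold in some $\Ss_\alpha(\RR)$ with $\alpha<\alphagap$, whence $P\in\OD^\alpha(P|\theta)\sub\Lp_\Gamma(P|\theta)$, a contradiction. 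The case of non-trivial $\Tt$ is then reduced to this via prewitnesses carried by the iteration maps.

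For the inclusion $\Lp_\Gamma(R|\theta)\sub R|\theta^{+R}$, your contradiction mechanism is backwards: the existence of a Q-structure guiding a branch means precisely that the relevant $\delta$ is \emph{not} Woodin in $\Lp_\Gamma$, so a nontrivial-but-short comparison yields no $\Gamma$-Woodin and no conflict with Claim~\ref{clm:no_dropping_ladder}. The paper does not try to show the comparison is trivial. Instead it lets the comparison run (with $R$-side non-dropping and $M^{\Tt'}_\infty\ins M^{\Uu'}_\infty$), fixes $\alpha<\alphagap$ with the given $P\pins\Lp_\Gamma(R|\theta)$ above-$\theta$ iterable in $\Ss_\alpha(\RR)$, chooses $n$ with $\theta<i^\Tt_{0\infty}(\theta_n^N)$ and $Q_n^M$ \emph{not} above-$\theta_n^N$ iterable in $\Ss_\alpha(\RR)$, and observes that the image of $Q_n^M$ under the composite iteration lands inside $M^{\Uu'}_\infty$, hence \emph{is} iterable in $\Ss_\alpha(\RR)$; pulling back along the iteration maps contradicts the choice of $n$.
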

\begin{proof}
We have $\Lp_\Gamma(R|\theta)\ins R$,
since otherwise letting $P\pins\Lp_\Gamma(R|\theta)$ be such that $\rho_\om^P\leq\theta$
and $P\npins R$, we can compare $R$ with $P$,
producing trees $\Tt',\Uu'$ respectively,
with $\Tt'$ being $1$-maximal, $M^{\Tt'}_\infty\ins M^{\Uu'}_\infty$ and $b^{\Tt'}$ does not drop in model or degree. Now there is $\alpha<\alphagap$ such that $P$ is above-$\theta$ iterable in $\Ss_\alpha(\RR)$. And there is $n<\om$ such that $\theta<i^{\Tt}_{0\infty}(\rung_n^N)$ and $Q_n^M$ is not above $\rung_n^N$-iterable in $\Ss_\alpha(\RR)$. But $i^{\Tt'}_{0\infty}(i^\Tt_{0\infty}(Q_n^M))$ is above $i^{\Tt'}_{0\infty}(i^\Tt_{0\infty}(\rung_n^M))$-iterable in $\Ss_\alpha(\RR)$, and pulling back this iteration strategy with $i^{\Tt'}_{0\infty}\com i^\Tt_{0\infty}$ gives a contradiction.

Now suppose for a contradiction that $\Lp_\Gamma(R|\theta)\pins R|\theta^{+R}$; and suppose for the moment that $\Tt$ is trivial, so $R=M$.
Let $P\pins M$ be  such that $\Lp_\Gamma(M|\theta)=M|\theta^{+M}$ and $\rho_\om^M\leq\theta$. Let $g$ be $(M,\Coll(\om,\theta))$-generic
and $y\in M[g]$ be a real coding $P$.
Now for all sound premice $P'$
such that $\theta$ is a strong cutpoint of $P'$, $P'|\theta^{+P'}=P|\theta^{+P}$, and $\rho_\om^{P'}\leq\theta$ but $P'\neq P$,
we have that $P'$ is not above-$\theta$, $\Pi_1^{\Ss_{\alphagap}}(\RR)$-iterable.
(Otherwise comparing $P$ with $P'$
gives an above-$\theta$ tree $\Tt$ on $P$ which generates a $\Gamma$-ladder, contradicting Claim \ref{clm:no_dropping_ladder}.) Note that this statement is $\all^\RR\Sigma_1(\{P,\theta\})$,
and is true in $\Ss_{\alphagap}(\RR)$. So, since there is no $(\alphagap,\RR)$-cofinal real in $M[g]$,
there is $\alpha<\alphagap$
which already satisfies the statement.
But $P$ is above-$\theta$, $\Pi_1$-iterable
in $\Ss_{\alphagap}$, and hence also in $\Ss_\alpha$. So $P$ is the unique sound premouse $P'$ such that $\theta$ is a strong cutpoint of $P'$, $P'|\theta^{+P'}=P|\theta^{+P}$,
$\rho_\om^{P'}\leq\theta$, and $P$ is $\Pi_1$-iterable in $\Ss_\alpha(\RR)$. Therefore $P\in\OD^{\alpha}(P|\theta)$, so $P\in\OD^{<\alphagap}(P|\theta)\sub\Lp_{\Gamma}(P|\theta)=P|\theta^{+P}$, a contradiction.

Now suppose $\Tt$ is non-trivial.
First consider the case that $\theta=i^\Tt_{0\infty}(\bar{\theta})$
for some strong cutpoint $\bar{\theta}$ of $M$.
Note that $i^\Tt_{0\infty}$ is continuous at
$\bar{\theta}^{+M}$. So fix $\bar{P}\pins M$
such that $\rho_\om^{\bar{P}}\leq\bar{\theta}\leq\OR^{\bar{P}}$; it suffices to see that $P=i^\Tt_{0\infty}(\bar{P})\pins\Lp_\Gamma(R|\theta)$. We can fix $n<\om$ and some $W\ins Q_n^N$ with $\OR^{\bar{P}}<\rung_n^N<\OR^W$ and
such that $W$ is an above-$\rung_n^N$, $\varphi(\bar{P},\bar{\theta})$-prewitness where $\varphi(P',\theta')$ asserts ``there is an above-$\theta'$ iteration strategy for $P'$''. Since $b^\Tt$ is non-dropping, $b^{\Uu_\Tt}$ is also non-dropping, and $\pi^\Tt_\infty:M^\Tt_\infty\to N^{M^{\Uu_\Tt}_\infty}_{\xi_\infty}$
and $\pi^\Tt_\infty\com i^{\Tt}_{0\infty}=i^{\Uu_\Tt}_{0\infty}\rest M$.
So $i^{\Uu_\Tt}_{0\infty}(W)\ins Q_n^{M^{\Uu_\Tt}_\infty}$ and $i^{\Uu_\Tt}_{0\infty}$ is an above-$\rung_n^{M^{\Uu_\Tt}_\infty}$, $\varphi(\pi_{\infty}^{\Tt}(P),\pi_{\infty}^\Tt(\theta))$-prewitness. But $Q_n^{\Uu_\Tt}$ is above-$\rung_n^{\Uu_\Tt}$ iterable in $\Ss_{\alphagap}(\RR)$,
so $\pi_\infty^\Tt(P)$ is above-$\pi_\infty^\Tt(\theta)$ iterable in $\Ss_{\alphagap}(\RR)$,
so $P$ is above-$\theta$ iterable in $\Ss_{\alphagap}(\RR)$, which suffices.

Now fix a strong cutpoint $\theta_0$  of $M$.
Note that for every strong cutpoint $\theta$ of $M$ with $\theta<\theta_0$, and for every $P\pins M$ with $\theta\leq\OR^P$ and $\rho_\om^P\leq\theta$,
there is an above-$\theta_0$, $\varphi(P,\theta)$-prewitness $W$ such that $W\pins M|\theta_0^{+M}$. This statement is preserved by $i^{\Tt}_{0\infty}$,
so for every strong cutpoint $\theta$ of $R$ with $\theta<i^\Tt_{0\infty}(\theta_0)$,
and for every $P\pins R$ with $\theta\leq\OR^P$
and $\rho_\om^P\leq\theta$,
there is an above-$i^\Tt_{0\infty}(\theta_0)$,
$\varphi(P,\theta)$-prewitness $W$ such that $W\pins R|i^{\Tt}_{0\infty}(\theta_0)^{+R}$.
But we have already seen that $R|i^{\Tt}_{0\infty}(\theta_0)^{+R}=\Lp_\Gamma(R|\theta_0)$, and it follows that $P$ is above-$\theta$
iterable in $\Ss_{\alphagap}(\RR)$, as desired.
\end{proof}

 By Claim \ref{clm:R|theta^+=Lp(R|theta)},
  $M|\om_1^M=\Lp_{\Gamma}(\emptyset)$,  so $\RR^M=\OD_{\alphagap 1}=\OD_{<\alphagap}$.
 
 Now let $y\in\OD_{\alphagap 2}$. It suffices to see that $y\in M$.
 Let $\varphi$ be a $\Pi_1$ formula and $\eta<\om_1$  be such that for all $w\in\WO_\eta$ and for all $z\in\RR$, we have
 \[ z=y\iff\Ss_{\alphagap}(\RR)\sats\exists X\ \varphi(X,w,z).\]
 Let $\Tt$ be the length $\eta+1$ linear iteration of $M$ at its least measurable.
 Let $R=M^\Tt_\infty$. Let $\alpha_0$ be least such that there is $X\in\Ss_{\alpha_0}(\RR)$ and $w\in\WO_\eta$ such that
  $\Ss_{\alphagap}(\RR)\sats\varphi(X,w,z)$.
  Let $n<\om$ be such that $Q_n^M$ is not above-$\rung_n^N$ iterable in $\Ss_{\alpha_0}(\RR)$.
  Clearly $\eta<\OR(i^{\Tt}_{0\infty}(Q_n^M))$.
  Let $\theta$ be a strong cutpoint of $M$
  with $\OR(Q_n^M)<\theta$. Let $W\pins M|\theta^{+M}$ be an above-$\theta$, $\psi(Q_n^M,\rung_n^N)$-prewitness, where $\psi(Q',\theta')$ asserts ``there is an above $\theta'$ iteration strategy for $Q'$''.
  
  We claim that for $k<\om$,
  we have $k\notin y$ iff $R|i^\Tt_{0\infty}(\theta)^{+R}\sats$``there is  $U\pins L[\es]$
  which is an above-$i^\Tt_{0\infty}(\theta)$,
  $\tau(k,\eta,i^\Tt_{0\infty}(Q),i^\Tt_{0\infty}(\rung_n^N))$-prewitness,
  where $\tau(k',\eta',Q',\theta')$ asserts ``$\exists\alpha,\beta$ such that $\alpha\leq\beta$ and $\Ss_\alpha(\RR)\sats$``$Q'$ is above-$\theta'$ iterable, and I am the least level of $L(\RR)$ with such an iteration strategy'',
  and $\Ss_\beta(\RR)\sats$``letting $\pi:\RR\to\Ss_\alpha(\RR)$ be the resulting canonical surjection, we have
  \[ \all^\RR x,w,z\ \big[k\in z\wedge w\in \WO_\eta\implies\ \neg\varphi(\pi(x),w,z)\big]\text{''}\text{''}.\]
  This is straightforward to verify, using (i) the
  fact that $R|i^\Tt_{0\infty}(\theta)^{+R}=\Lp_\Gamma(R|i^\Tt_{0\infty}(\theta))$ and $\Lp_\Gamma(R|i^\Tt_{0\infty}(\theta))$ has segments which yield $\varrho$-witnesses for all the relevant $\Sigma_1^{L(\RR)}$ assertions,
  and (ii) letting $\alpha_0'$
  be the least $\alpha'$ such that $i^\Tt_{0\infty}(Q)$ is above-$i^\Tt_{0\infty}(\theta_n^N)$ iterable
  in $\Ss_{\alpha'}(\RR)$, and $\pi:\RR\to\Ss_{\alpha_0'}(\RR)$ the resulting canonical surjection, then if $k\in\om\cut y$, we have
  \[ \Ss_{\alphagap}(\RR)\sats\all^\RR x,w,z\ \big[k\in z\wedge w\in\WO_\eta\implies\neg\varphi(\pi(x),w,z)\big],\]
  and so by case hypothesis, there is $\beta\in[\alpha_0',\alphagap)$ such that $\Ss_\beta(\RR)$ satisfies the same statement.

  This shows that $y$ is definable over $R|i^\Tt_{0\infty}(\theta)^{+R}$,  so $y\in R$, so $y\in M$, as desired.
  \end{case}
  
  \begin{case}\label{case:alphagap,R-cofinal_real} Otherwise;
  that is, there is $(\Tt,R,\lambda,g,y)$
  as described in Case \ref{case:no_alphagap,R-cofinal_real}.
  
  In this case, we will just show $\OD_{\alphagap 2}=\RR\cap M$.
  
  \begin{scase}\label{scase:M_has_alphagap,R-cofinal_real} There is an $(\alphagap,\RR)$-cofinal real $y_0\in M$.

 Easily, $\Sigma_2^{\Ss_{\alphagap}(\RR)}$
 reduces recursively to $\exists^\RR\Pi_1^{\Ss_{\alphagap}(\RR)}(\{y_0\})$.  We will show that $\exists^\RR\Pi_1^{\Ss_{\alphagap}(\RR)}$
 can be appropriately defined over $M$,
 by doing a version of what we did in \S\ref{sec:coarse_countable_cof}.
 
Fix a $\Pi_1$ formula $\psi(u,v,w)$. Fix $x\in\RR^M$.
 Consider the game $\mathscr{G}=\mathscr{G}_{\exists^\RR\psi(x,y_0)}$ in which player 2 attempts to build  $(X,\left<g_n\right>_{n<\om},w)$
  such that  $X\preccurlyeq_1 M$,   $\{\rung_n^N\}_{n<\om}\sub X$, $g_n$
  is $(P_n^N,\BB_{\rung_n^N}^{Q_n^N})$-generic,
  $w$ is the corresponding generic real (independent of $n$), $g_n=g_{n+1}\cap\BB_{\theta_n^N}^{Q_n^N}$, and
  there is no $n<\om$ and $W\pins Q_n^N$
  such that $W[w]$ is an above-$\rung_n^N$, $\neg\psi(x,y_0,w)$-prewitness.
  
  In more detail, this is as follows. Fix a recursive enumeration $\left<\psi_i\right>_{i<\om}$ of all formulas in the passive premouse language.
  Write $\BB_i=\BB_{\rung_i^N}^{Q_i^M}$ (the extender algebra of $Q_i^M$ at $\rung_i^N$).
  The game  $\mathscr{G}$ has length $\om$. In round $n<\om$, player 1 first plays some $\vec{a}_n\in (M|\rung_n^N)^{<\om}$, and then player 2 plays
  $\vec{b}_n,\vec{x}_n\in (M|\rung_n^N)^{<\om}$
  such that:
  \begin{enumerate}
  \item (Cofinality of $X$) $\vec{a}_n\cup\vec{b}_n\sub\vec{x}_n$,
   and if $n>0$ then $\rung_{n-1}^N\in\vec{x}_n$.
   \item ($\Sigma_1$-elementarity of $X$): If $n>0$ then for all $i,j,k<n$, letting $\vec{x}'_\ell=\vec{x}_\ell\cap M|\rung_i^N$,  if \[ M|\rung_i^N\sats\exists z\ \psi_j(\vec{x}'_0,\ldots,\vec{x}'_{k-1},z) \]
   then there is some $x\in\vec{x}_n$ such that $M|\rung_i^N\sats\psi_j(\vec{x}'_0,\ldots,\vec{x}_{k-1}',x)$.
      \item $\vec{b}_n=\left<b_{ni}\right>_{i\leq n}$
   where $b_{ni}\in\BB_i$
   for each $i\leq n$.
   \item If $n>0$ then $b_{ni}\leq^{\BB_i} b_{n-1,i}$ for each $i<n$.
   \item For all $i\leq j\leq n$, (noting that $b_{ni}\in\BB_j$) we have $b_{nj}\leq^{\BB_j} b_{ni}$.
   \item If $n>0$ then for each $A\in\vec{x}_{n-1}$ and each $i< n$,
   if $A$ is a maximal antichain of $\BB_{i}$
   then there is $a\in A$ such that $b_{ni}\leq^{\BB_i} a$.
   
   \item\label{item:avoid_counterexample} For each $i\leq n$, there is no $W\pins Q_i^M$ with $\rung_i^N<\OR^W$ such that \[ Q_i^M\sats b_{ni}\forces_{\BB_i}\text{``}W\text{ is an above-}\rung_i^N\text{, }\neg\psi(x,y_0,\dot{w})\text{-prewitness}\text{''},\]
   where $\dot{w}$ denotes the $\BB_i$-generic real.
  \end{enumerate}

  Now  $\mathscr{G}$ is closed for player 2,
  so one of the players has a winning strategy.
  We claim that player 2 has a winning strategy iff $\Ss_{\alphagap}(\RR)\sats\exists^\RR w\ \psi(x,y,w)$.
  
  First suppose player 2 wins  $\mathscr{G}$,
  and let $\sigma$ be a winning strategy.
  Then we can clearly play against $\sigma$, ensuring that $X=\bigcup_{n<\om}\vec{x}_n=M$.
  This yields $\left<g_n\right>_{n<\om}$
  such that for each $n<\om$, $g_n$ is $(Q_n^M,\BB_n)$-generic, and $g_n\sub g_{n+1}$.
  So letting $w_n$ be the generic real given by $g_n$, in fact $w_n$ is independent of $n$,
  so write $w=w_n$. Then $\Ss_{\alphagap}(\RR)\sats\psi(x,y,w)$, since the cofinality of the $Q_n^M$'s in $\Ss_{\alphagap}(\RR)$ ensure that all potential counterexamples to $\psi$ have been ruled out (using condition \ref{item:avoid_counterexample} in the rules of $\mathscr{G}$).
  
  Now suppose instead that player 1 wins $\mathscr{G}$. Then the game tree associated to $\mathscr{G}$ is ranked in the usual manner. That is, define sets $W_\alpha$
  of positions in $\mathscr{G}$, which will be winning positions for player 1, by recursion on $\alpha$. We will have $W_\alpha\sub W_\beta$ for $\alpha<\beta$, and $\emptyset\in W_\infty$. Start with $W_0=$ the set of finite partial plays
  \begin{equation}\label{eqn:partial_play}\sigma=\left<\vec{a}_i,\vec{b}_i,\vec{x}_i\right>_{i<n},\end{equation}
  with player 1 next to move, in which player 1 has already won. Then given $\left<W_\alpha\right>_{\alpha<\beta}$,
  let $W_\beta$ be the set of all finite partial plays $\sigma$ as in (\ref{eqn:partial_play})
  such that there is $\vec{a}_n\in(M|\rung_n^N)^{<\om}$
  such that for all $\vec{b}_n,\vec{x}_n\in(M|\rung_n^N)^{<\om}$, we have $\sigma\conc\left<\vec{a}_n,\vec{b}_n,\vec{x}_n\right>\in\bigcup_{\alpha<\beta}W_\alpha$.
  Then $W_\alpha\sub W_\beta$ for all $\alpha<\beta$. Let $\alpha_\infty$
  be least such that $W_{\alpha_\infty+1}=W_{\alpha_\infty}$.
  Then for any finite partial play $\sigma$
  as above, player 1 has a winning strategy from $\sigma$ iff $\sigma\in W_{\alpha_\infty}$.
  In particular, $\emptyset\in W_{\alpha_\infty}$.
  
  We now define a \emph{$\Pi_2$-witness}
  by analogy with the corresponding notion in \S\ref{sec:coarse_countable_cof}.

\begin{dfn}\label{dfn:fine_Pi_2-witness}
Given a transitive swo'd $X\in\HC$, an $X$-premouse $N$ 
and $n<\om$, we say that $(N,\rung_0,\ldots,\rung_n)$ is  \emph{$n$-partial-potential-ladder}
iff:
\begin{enumerate}[label=--]\item $\theta_0<\theta_1<\ldots<\theta_n$,
\item $\theta_i$ is a  cutpoint of $N$, for each $i\leq n$,
\item $N\sats$``$\rung_i$ is a limit cardinal which is \emph{not} Woodin and \emph{not} measurable''
for each $i\leq n$, and
\item $\rung_n^{+N}<\OR^N$ and $\rung_n^{+N}$ is the largest cardinal of $N$.
\end{enumerate}

Suppose $(N,\vec{\theta})$ is an $n$-partial-potential-ladder. Then let $Q_i^{(N,\vec{\theta})}\ins N$ denote the Q-structure for $N|\rung_i$, for $i\leq n$.
Given $x,y_0\in\RR^N$ and a $\Pi_1$ formula $\psi(u,v,w)$ of the $L(\RR)$ language,
let $\mathscr{G}^{(N,\vec{\theta})}=\mathscr{G}_{\exists^{\RR}\psi(x,y_0)}^{(N,\vec{\theta})}$
denote the set of partial plays
$\sigma=\left<(\vec{a}_i,\vec{b}_i,\vec{x}_i)\right>_{i<n}$
(of length exactly $n$) according to the rules for $\mathscr{G}_{\exists^\RR\psi(x,y_0)}$ described earlier, relative to $\left<(\theta_i,Q_i)\right>_{i<n}$
where $Q_i=Q_i^{(N,\vec{\theta})}$.
\end{dfn}

\begin{dfn}Let $X\in\HC$ be transitive swo'd,
 $N$ be an $X$-premouse,  $\vec{\theta},\varrho\in N^{<\om}$, $x,y_0\in\RR^N$, and $\Delta\in N$.
Let $\psi(u,v,w)$ be a $\Pi_1$ formula of the $L(\RR)$ language. We say that $(N,\vec{\theta},\Delta)$ is a \emph{$(\all^\RR \neg\psi(x,y_0),\varrho)$-prewitness}
iff, letting $n=\lh(\varrho)$, then $(N,\vec{\theta})$ is an $n$-partial-potential-ladder (so $\lh(\vec{\theta})=n+1$),
and letting $(N_n,\vec{\theta}_n,\varrho_n,\Delta_n)=(N,\vec{\theta},\varrho,\Delta)$,  then
 $\varrho\in\mathscr{G}_{\exists^\RR\psi(x,y_0)}^{(N,\vec{\theta})}$
 and $\Delta$
 is a non-empty tree (set of finite sequences  closed under initial segment) whose elements $\sigma$ have form
\[ \sigma=(\sigma_{n+1},\ldots,\sigma_{n+k}) \]
where $k<\om$ and for $0\leq i\leq k$, $\sigma_{n+i}$ has form
\[ \sigma_{n+i}=(N_{n+i},\vec{\theta}_{n+i},\varrho_{n+i},\Delta_{n+i}) \]
(so $\sigma_n=(N,\vec{\theta},\varrho,\Delta)$, but $\sigma_n$ is not actually an element of $\sigma$), and moreover, for each $\sigma\in \Delta$, with $\sigma,k,\sigma_{n+i}$ as above, the following conditions hold:
\begin{enumerate}
\item If $\sigma\neq\emptyset$ then for every $i<k$, we have the following:
\begin{enumerate}\item
$N_{n+i+1}\pins N_{n+i}$,
\item $(N_{n+i+1},\vec{\theta}_{n+i+1})$ is an $(n+i+1)$-potential-partial-ladder,
\item $\rho_1^{N_{n+i+1}}=\rho_\om^{N_{n+i+1}}=\theta_{n+i}^{+N_{n+i}}$,
\item 
$\vec{\theta}_{n+i+1}\rest(n+i+1)=\vec{\theta}_{n+i}$,
\item $\varrho_{n+i+1}\in \mathscr{G}_{\exists^\RR\psi(x,y_0)}^{(N_{n+i+1},\vec{\theta}_{n+i+1})}$ (so $\lh(\varrho_{n+i+1})=n+i+1$),
\item $\varrho_{n+i}=\varrho_{n+i+1}\rest(n+i)$,
\item $\Delta_{n+i+1}\in N_{n+i+1}$
is a  tree,
\item 
$ (\sigma_{n+i+2},\ldots,\sigma_{n+k})\in \Delta_{n+i+1}$.
\end{enumerate}
\item
$\Delta_{n+k}=\{\tau\bigm|\sigma\conc\tau\in \Delta\}$.
\item 
Letting $\theta_{n+k}=\max(\vec{\theta}_{n+k})$,
there is $\vec{a}\in (N_{n+k}|\rung_{n+k})^{<\om}$ such that for all $\vec{b},\vec{x}\in(N_{n+k}|\rung_{n+k})^{<\om}$ such that \[\varrho'=\varrho_{n+k}\conc\left<(\vec{a},\vec{b},\vec{x})\right>\in\mathscr{G}_{\exists^\RR\psi(x,y_0)}^{(N_{n+k},\vec{\theta}_{n+k})},\]
there is $\sigma'\in\Delta$
such that $\sigma'=\sigma\conc\left<(N',\vec{\theta}',\varrho',\Delta')\right>$
for some $N',\vec{\theta}',\Delta'$.
\end{enumerate}

A \emph{\emph{$(\all^\RR \neg\psi(x,y_0))$-prewitness}}
is a $(\all^\RR \neg\psi(x,y_0),\emptyset)$-prewitness.
\end{dfn}

\begin{lem}
 There is a $(\all^\RR\neg\psi(x,y_0))$-prewitness $(N,\vec{\theta},\Delta)$
 with $N\pins M|\om_1^M$.
\end{lem}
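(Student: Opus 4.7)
The plan is to mimic very closely the proof of \ref{item:T-tilde_wfd} $\Rightarrow$ \ref{item:Pi_2-witness_pins_M} in Lemma \ref{lem:equivalence_coarse_countable_cof}, but replacing the induction on $\rank^{\wt{T}_x^M}$ with an induction on the game-tree rank $W_\alpha$ given by the assumption that player 1 wins $\mathscr{G}$. For $n<\om$ write $N_n=M|\theta_n^{++M}$ and $\vec{\theta}_n=(\theta_0^M,\ldots,\theta_n^M)$; observe that $(N_n,\vec{\theta}_n)$ is an $n$-partial-$\Gamma$-ladder, hence an $n$-partial-potential-ladder in the sense of Definition \ref{dfn:fine_Pi_2-witness}, and $Q_i^{(N_n,\vec{\theta}_n)}=Q_i^M$ for $i\leq n$, so the game $\mathscr{G}^{(N_n,\vec{\theta}_n)}_{\exists^\RR\psi(x,y_0)}$ is exactly the set of length-$n$ partial plays in $\mathscr{G}$.

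Call $(n,\varrho)$ \emph{relevant} iff $\varrho\in\mathscr{G}^{(N_n,\vec{\theta}_n)}_{\exists^\RR\psi(x,y_0)}\cap W_{\alpha_\infty}$. I will show by induction on the least $\alpha$ with $\varrho\in W_\alpha$, simultaneously for all relevant $(n,\varrho)$, that there is $\Delta\in N_n$ with $(N_n,\vec{\theta}_n,\Delta)$ a $(\all^\RR\neg\psi(x,y_0),\varrho)$-prewitness. Taking $n=0$ and $\varrho=\emptyset$ (which lies in $W_{\alpha_\infty}$ since player 1 wins), this produces some $\Delta\in N_0$ such that $(N_0,\vec{\theta}_0,\Delta)$ is a $(\all^\RR\neg\psi(x,y_0))$-prewitness; then taking the transitive collapse of the $\Sigma_1$-hull of $\{x,y_0,x_{\cof},\theta_0^M,\Delta\}\cup\om$ in $N_0$ and applying condensation yields the required $N\pins M|\om_1^M$.

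For the base case, $\varrho\in W_0$ means player 1 has already won at $\varrho$, i.e.\ there is a move $\vec{a}\in(N_n|\theta_n^M)^{<\om}$ for which no legal player-2 response $(\vec{b},\vec{x})$ exists. Set $\Delta=\{\emptyset\}$; the third clause of Definition \ref{dfn:fine_Pi_2-witness} is then satisfied vacuously with this $\vec{a}$. For the inductive step at rank $\alpha>0$, fix some $\vec{a}\in (N_n|\theta_n^M)^{<\om}$ witnessing $\varrho\in W_\alpha$; then for each legal response $(\vec{b},\vec{x})$, the extended play $\varrho'=\varrho\conc\langle(\vec{a},\vec{b},\vec{x})\rangle$ has rank $<\alpha$ and length $n+1$, so by induction there is a least $\Delta_{\varrho'}\in N_{n+1}$ such that $(N_{n+1},\vec{\theta}_{n+1},\Delta_{\varrho'})$ is a $(\all^\RR\neg\psi(x,y_0),\varrho')$-prewitness.

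The main obstacle, as in Lemma \ref{lem:equivalence_coarse_countable_cof}, is to absorb all these $\Delta_{\varrho'}$'s into a single object lying in $N_n$ rather than $N_{n+1}$. I handle this by the same hull/condensation trick: let
\[\bar{N}=\cHull_1^{N_{n+1}}\bigl(\theta_n^{+M}\cup\{\theta_{n+1}^M\}\bigr)\]
with uncollapse $\varrho:\bar{N}\to N_{n+1}$. As in the proof of \ref{item:T-tilde_wfd} $\Rightarrow$ \ref{item:Pi_2-witness_pins_M} in Lemma \ref{lem:equivalence_coarse_countable_cof}, $\bar{N}$ is $1$-sound with $\rho_1^{\bar{N}}=\theta_n^{+M}$ and $\varrho(p_1^{\bar{N}})=\{\theta_{n+1}^{+M}\}$, so by condensation $\bar{N}\pins N_n$ and $\bar{N}\in N_n$; since $\theta_n^{+M}<\crit(\varrho)$, the game $\mathscr{G}^{(N_{n+1},\vec{\theta}_{n+1})}$ restricted to length-$(n+1)$ plays and each $\Delta_{\varrho'}$ lie in $\rg(\varrho)$. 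Now define $\Delta\in N_n$ to be the set of finite sequences $\sigma=(\sigma_{n+1},\ldots,\sigma_{n+k})$ where either $k=0$, or $\sigma_{n+1}=(\bar{N},\vec{\theta}_{n+1},\bar{\varrho}',\bar{\Delta}')$ with $\varrho(\bar{\varrho}',\bar{\Delta}')=(\varrho',\Delta_{\varrho'})$ for some legal player-2 response producing $\varrho'$, and $(\sigma_{n+2},\ldots,\sigma_{n+k})\in\bar{\Delta}'$. Verification that $(N_n,\vec{\theta}_n,\Delta)$ satisfies all three clauses of Definition \ref{dfn:fine_Pi_2-witness} is then routine, using this $\vec{a}$ in clause 3 and the induction hypothesis for the nested $\Delta_{\varrho'}$.
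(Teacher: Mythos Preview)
Your proposal is correct and follows exactly the approach the paper sketches: induction on rank in $W_{\alpha_\infty}$, mirroring the proof of Lemma~\ref{lem:equivalence_coarse_countable_cof}, with the same hull/condensation step for absorbing the $\Delta_{\varrho'}$'s and the same final hull to pass from $N_0$ down to some $N\pins M|\om_1^M$. (One cosmetic point: in the paper's indexing, $W_0$ consists of plays where player~2's last move was already illegal, so no legal $\varrho$ lies in $W_0$ and your base case is really rank~$1$; this does not affect the argument.)
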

\begin{proof}
This follows from the following more general fact: For each $n<\om$
and  finite partial play $\varrho$
of $\mathscr{G}_{\exists^\RR\psi(x,y_0)}$
with $\varrho\in W_{\alpha_\infty}$,
there is a $((\all^\RR\neg\psi(x,y_0)),\varrho)$-prewitness $(N,\vec{\theta},\Delta)$
such that $N\pins M|\rung_n^{++M}$.
(Applying this to $n=0$ and $\varrho=\emptyset$, we get some $(N',\vec{\theta}',\Delta')$ with $N'\pins M|\rung_0^{++M}$ (and $\lh(\vec{\theta}')=1$).
But then by condensation, we get some such $(N,\vec{\theta},\Delta)$ with $N'\pins M|\om_1^M$.) The proof of this is by induction on ranks of nodes in $W_{\alpha_\infty}$,
much like in the proof of Lemma \ref{lem:equivalence_coarse_countable_cof}, so we omit further detail.\footnote{Recall that $\rung_n^N$
is a strong cutpoint of $N$. It follows that $\rung_n^N$ is a non-measurable cutpoint in $M$.
It seems it might be that $\rung_n^N$ fails to be a strong cutpoint of $M$, however,
since coring might lead to partial measures $E\in\es^M$ with $\crit(E)=\rung_n^N$. Hence the formulation of Definition \ref{dfn:fine_Pi_2-witness} in this regard.}
\end{proof}
\begin{lem}
 Suppose there is a $(\all^\RR\neg\psi(x,y_0))$-prewitness $(N,\vec{\theta},\Delta)$
 with $N\pins M|\om_1^M$.
 Then $\Ss_{\alphagap}(\RR)\sats\all^\RR w\ \neg\psi(x,y_0,w)$.
\end{lem}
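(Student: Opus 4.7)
The plan is to argue by contradiction, along the lines of the final argument in the proof of Lemma~\ref{lem:equivalence_coarse_countable_cof}. Suppose $\Ss_{\alphagap}(\RR)\sats\psi(x,y_0,w)$ for some real $w$. I will iteratively peel off rungs of the prewitness $N$ to absorb $w$, using condition~(3) of Definition~\ref{dfn:fine_Pi_2-witness} to descend through $\Delta$ and produce an $\om$-stacked iteration of $N$ whose natural cofinal branch drops $\om$-many times in model (impossible by the iterability of $N\pins M|\om_1^M$) and whose common part model is a $\Gamma$-ladder (impossible by the minimality of $M=M_{\ladder}^\Gamma(x_{\cof})$ together with $N\pins M|\om_1^M$).

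By recursion on $n$, I construct trees $\Tt_0,\Tt_1,\ldots$ on $N$ (stacked, with drops), a sequence $\sigma_n=\langle(N_1,\vec{\theta}_1,\varrho_1,\Delta_1),\ldots,(N_n,\vec{\theta}_n,\varrho_n,\Delta_n)\rangle$ lying in the image of $\Delta$ in the current top model, and compatible generic filters $g_i\sub\BB_i=\BB_{\theta_i}^{Q_i}$ (for $i<n$) all decoding $w$. At stage $n$: condition~(3) furnishes some $\vec{a}_n\in(N_n|\theta_n^{N_n})^{<\om}$; I iterate the image of $Q_n=Q_n^{(N_n,\vec{\theta}_n)}$ above $\theta_{n-1}^{+N_n}$ via the strategy inherited from $N$'s full strategy, making $w$ generic for the image of $\BB_n$ and lifting this to a tree $\Tt_n$ on $N_n$ with top model $N_n^*$. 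I then choose $b_{n,n}$ to be a condition in the image of $\BB_n$ lying in the generic filter that decodes $w$, refining each $i^{\Tt_n}_\infty(b_{n-1,i})$, and meeting every maximal antichain of (the image of) $\BB_i$ occurring in $\vec{x}_{n-1}$; set $b_{n,i}:=i^{\Tt_n}_\infty(b_{n-1,i})$ for $i<n$. I build $\vec{x}_n$ by closing $\vec{a}_n\cup\vec{b}_n$ under the $\Sigma_1$-Skolem function of $N_n^*|\theta_n^{N_n^*}$ to discharge clause~(2) of the game rules. The critical verification is clause~\ref{item:avoid_counterexample}: no $W\pins i(Q_i)$ satisfies $i(Q_i)\sats$ ``$b_{n,i}\forces W$ is an above-$\theta_i$, $\neg\psi(x,y_0,\dot w)$-prewitness''. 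For this I apply Lemma~\ref{lem:Sigma_1^alphagap-correct} to the generic extension $i(Q_i)[g_i]$ -- using $\Lp_\Gamma(i(Q_i)|\xi)\ins i(Q_i)$ for all $\xi<\OR^{i(Q_i)}$, inherited from $N$ and preserved by iteration -- so any such $W$ would produce in the true $\Ss_\alphagap(\RR)$ a genuine above-$\theta_i$, $\neg\psi(x,y_0,w)$-prewitness, hence $\Ss_\alphagap(\RR)\sats\neg\psi(x,y_0,w)$, contradicting the choice of $w$. With $\varrho_{n+1}:=\varrho_n\conc\langle(\vec{a}_n,\vec{b}_n,\vec{x}_n)\rangle$ legal, condition~(3) applied inside $N_n^*$ yields the next node $(N_{n+1},\vec{\theta}_{n+1},\varrho_{n+1},\Delta_{n+1})$ with $N_{n+1}\pins N_n^*$; drop to $N_{n+1}$ and continue.

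The main obstacle is the coherent bookkeeping across stages: (i)~maintaining that a single $w$ is decoded by every $g_i$, (ii)~transporting the conditions $b_{n-1,i}$ under successive iteration maps while preserving descent in $\BB_i$, and (iii)~fitting the growing list of $\Sigma_1$-elementarity witnesses demanded by clause~(2) into the finite tuples $\vec{x}_n$. Once this is in place, the $\om$-step recursion yields the $\om$-stacked iteration described above, from which the twin contradictions follow exactly as in Lemma~\ref{lem:equivalence_coarse_countable_cof}.
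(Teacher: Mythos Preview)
Your overall plan is correct and matches the paper's proof: assume a counterexample $w$, iterate to make $w$ generic at successive rungs, use condition~(3) of the prewitness to descend through $\Delta$, and reach a contradiction via infinitely many drops. The bookkeeping you describe (carrying the $b_{n,i}$'s forward under iteration maps, building $\vec{x}_n$ to satisfy the $\Sigma_1$-elementarity clause) is exactly what is needed.

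There is, however, a genuine gap in your verification of clause~\ref{item:avoid_counterexample}. You invoke Lemma~\ref{lem:Sigma_1^alphagap-correct} for $i(Q_i)[g_i]$, relying on the hypothesis $\Lp_\Gamma(i(Q_i)|\xi)\ins i(Q_i)$ for all $\xi$, which you say is ``inherited from $N$ and preserved by iteration''. But this is not established: $N\pins M|\om_1^M$ where $M$ is the output of the Q-local local $K^c$-construction of $M_{\ladder}^\Gamma(x_{\cof})$, and we are in Case~\ref{case:alphagap,R-cofinal_real}, where (unlike Case~\ref{case:no_alphagap,R-cofinal_real}, Claim~\ref{clm:R|theta^+=Lp(R|theta)}) we have no guarantee that $M$ or its segments are $\Lp_\Gamma$-closed. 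Moreover, even granting the hypothesis, Lemma~\ref{lem:Sigma_1^alphagap-correct} gives $\Sigma_1$-correctness of the \emph{reals} of $i(Q_i)[g_i]$; it does not by itself show that a prewitness $W\pins i(Q_i)$ is iterable, which is what is needed to conclude $\Ss_{\alphagap}(\RR)\sats\neg\psi(x,y_0,w)$.

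The paper's argument for this clause is different and direct: $i(Q_i)$ is above-$\theta_i'$ iterable via a strategy in $\Ss_{\alphagap}(\RR)$. This follows by lifting the tree on $N$ through $\Psi_M$ to a tree on $M_{\ladder}^\Gamma(x_{\cof})$; the image of $Q_i$ there lies below $\Lp_\Gamma$ of its base (since $M_{\ladder}^\Gamma(x_{\cof})$ \emph{is} $\Lp_\Gamma$-closed by definition), and pulling back the resulting strategy gives one for $i(Q_i)$ in $\Ss_{\alphagap}(\RR)$ (cf.\ the proof of Claim~\ref{clm:no_dropping_ladder}). Hence any $W\pins i(Q_i)$ is likewise iterable in $\Ss_{\alphagap}(\RR)$, and an iterable $\neg\psi(x,y_0,w)$-prewitness forces $\Ss_{\alphagap}(\RR)\sats\neg\psi(x,y_0,w)$, the desired contradiction. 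Replace your Lemma~\ref{lem:Sigma_1^alphagap-correct} appeal with this, and the proof goes through.
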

\begin{proof}
Suppose otherwise and let $w\in\RR$ be a counterexample. Iterate $N$ to $N'$, making $w$ generic for the image of the extender algebra at $\theta_0$ where $\vec{\theta}=\left<\theta_0\right>$. Let $\Tt_0$
be the iteration tree doing this.
Let $g_0$ be the $(Q'_0,\BB_{\theta'_0}^{Q'_0})$-generic determined by $w$, where $Q'_0\pins N'$ is the Q-structure for $\theta'_0=i^{\Tt}_{0\infty}(\theta)$. Let $a_0\in (N'|\theta'_0)^{<\om}$ witness the choice of $(N,\vec{\theta},\Delta)$ (which is preserved by $i^\Tt_{0\infty}$). Choose $\vec{b}_0,\vec{x}_0$ such that $(\vec{a}_0,\vec{b}_0,\vec{x}_0)\in\mathscr{G}^{(N',\vec{\theta}'_0)}_{\exists^\RR\psi(x,y_0)}$
with $\vec{b}_0$ consistent with $g_0$.
This is possible  since there is no conflict with condition \ref{item:avoid_counterexample}
of the rules of $\mathscr{G}_x$,
since $\Ss_{\alphagap}(\RR)\sats\psi(x,y_0,w)$,
and $Q'_0=i^\Tt_{0\infty}(Q_0)$
is above-$\theta'_0$ iterable in $\Ss_{\alphagap}(\RR)$ (so any $W$ as in condition \ref{item:avoid_counterexample} would also have to be above-$\theta'_0$ iterable in $\Ss_{\alphagap}(\RR)$, but this would imply that $\Ss_{\alphagap}(\RR)\sats\neg\psi(x,y_0,w)$). Then we can find $\sigma\in \Delta'=i^\Tt_{0\infty}(\Delta)$
with $\lh(\sigma)=1$,
and $\sigma$ consistent with $(\vec{a}_0,\vec{b}_0,\vec{x}_0)$.
Let $\sigma=(N_1,\vec{\theta}_1,\varrho_1,\Delta_1)$. Now iterate $N_1$ to make $w$ generic for the image of the extender algebra of $Q_1\pins N_1$ at $\theta_1$ (where $Q_1$ is the Q-structure for $N_1|\theta_1$,
and where $\vec{\theta}_1=(\theta',\theta_1)$).
Since we already made $w$ generic over $N'_0$,
and $\theta'_0$ is a cutpoint of $N_1$,
this iteration is above $\theta'_0$.
We can carry on in this manner throughout all finite stages $n$. But this produces a correct tree which drops in model infinitely often along its unique branch, a contradiction.
\end{proof}
  \end{scase}
\begin{scase}
 There is no $(\alphagap,\RR)$-cofinal real $y\in M$.
 
 Fix $(\Tt,R,\lambda,g,y)$
  witnessing  Case \ref{case:alphagap,R-cofinal_real} hypothesis;
  this tuple is as described in Case \ref{case:no_alphagap,R-cofinal_real}. Note we may assume that $\lambda$ is a strong cutpoint of $R$.
  The argument of Subcase \ref{scase:M_has_alphagap,R-cofinal_real} relativizes above $(R|\lambda,g)$, so letting $x\in\RR$ be the natural real coding $(R|\lambda,g)$, we have
    $\OD_{\alphagap 2}(x)\sub R[g]$. Therefore $\OD_{\alphagap 2}\sub R[g]$. But this is independent of $g$, and so $\OD_{\alphagap 2}\sub R$, so $\OD_{\alphagap 2}\sub M$, as desired.
\end{scase}
\end{case}
\end{proof}

\subsection{The mouse set theorem for $\OD_{\alpha,n+3}$}

Recall that we have already defined $\Pi_1^{\alphagap}$-iterability.
\begin{dfn}
For $n\geq 1$ and $k\leq\om$, we define 
 \emph{$\Pi^{\alphagap}_{n+1}$-$(k,\om_1)$-iterability} for $X$-premice $P\in\HC$, where $X\in\HC$ is transitive swo'd. The definition is by recursion on $n$, simultaneously for all $k$ and $X$.
 Say that a $k$-sound $X$-premouse $P\in\HC$ is \emph{$\Pi_{n+1}^{\alphagap}$-$(k,\om_1)$-iterable}
 iff for every  putative $k$-maximal
 tree $\Tt$ on $P$ of length $\eta<\om_1$, for every ordinal $\lambda\in\Lim\cup\{0\}$ with $\lambda\leq\eta$,
 and every sequence $\left<Q_\xi\right>_{\xi\in\Lim\cap(0,\lambda]}$
 such that for every limit ordinal $\xi\leq\lambda$, we have:
 \begin{enumerate}[label=--]\item  $Q_\xi$ is an above-$\delta(\Tt\rest\xi)$, $\Pi_n^{\alphagap}$-$(\om,\om_1)$-iterable $\delta(\Tt\rest\xi)$-sound
 Q-structure for $M(\Tt\rest\xi)$, and
 \item if  $\xi<\lambda$ then $Q_\xi\ins M^\Tt_\xi$,
 \end{enumerate}
then:
 \begin{enumerate}[label=--]
  \item if $\lambda<\lh(\Tt)$ and either $\lambda=0$ or $Q_\lambda\ins M^\Tt_\lambda$
  then $\Tt\rest(\min(\eta,\lambda+\om))$ has wellfounded models, and
  \item there is a $\Tt\rest\lambda$-cofinal branch $b$ such that $Q_\lambda\ins M^{\Tt\rest\lambda}_b$.\qedhere
 \end{enumerate}
\end{dfn}

Note that $\Pi^{\alphagap}_{n+1}$-iterability
is $(\Pi^{\RR}_{n+1})^{\Ss_{\alphagap}(\RR)}$.

\begin{dfn}Let $\alphagap$ be $\om$-standard,
 $(x_{\cof},\varphi_{\cof})$  be $\alphagap$-ascending, $\Gamma=\Sigma_1^{\Ss_{\alphagap}(\RR)}$. 
 Let $X\in\HC$ be transitive swo'd 
 with $x_{\cof}\in X$. Let
 $N$ be an  $X$-premouse.
 For $n<\om$,
 we say that $N$ is \emph{$n$-$\Gamma$-ladder-large} iff there is  $N'\ins N$
 and $\delta_0,\ldots,\delta_{n-1}\in(\rank(X),\OR^{N'})$ such that:
 \begin{enumerate}[label=--]\item $\delta_0<\ldots<\delta_{n-1}$,
  \item 
for each $i<n$,
 $N'\sats$``$\delta_i$ is Woodin'', and
 \item $N'$ is a putative $\Gamma$-ladder as an $N|\delta$-premouse,
 where $\delta=\delta_{n-1}$ if $n>0$, and $\delta=0$ otherwise.
 \end{enumerate}
 We say that $N$ is \emph{$n$-$\Gamma$-ladder-small}
 iff it is not $n$-$\Gamma$-ladder-large.
 
 For $y\in\RR$, let $M_{n,\ladder}^\Gamma(x_{\cof},y)$ 
 denote the minimal sound $1$-$\Gamma$-large $(x_{\cof},y)$-mouse. (So $M_{0,\ladder}^\Gamma(x_{\cof},y)=M_{\ladder}^\Gamma(x_{\cof},y)$.)\end{dfn}
 
\begin{proof}[Proof of Theorem \ref{tm:lightface_mouse_set} for $n>2$]
 
Let $m<\om$ and
 $N=M_{m+1,\ladder}^\Gamma(x_{\cof})$. Let $M$ be the output of the Q-local $L[\es]$-construction of $N$ (as in \cite{mouse_scales}).\footnote{We don't use the Q-local local $K^c$-construction here, just because it's not necessary. It would also work fine though.} We will show that $\OD_{\alphagap, m+3}=\OD^\RR_{\alphagap,m+3}=\RR\cap M$.

 Let $\Psi_M$ be the $(1,\om_1+1)$-strategy
 for $M$ given by lifting/resurrection to $\Sigma_N$. Given $\Tt$ on $M$ via $\Psi_M$,
 let $\Uu=\Uu_\Tt$ be the corresponding tree on $N$,
 and for $\alpha<\lh(\Tt)$, let $\xi_\alpha\leq\OR^{M^\Uu_\alpha}$
 and $\pi_\alpha:M^\Tt_\alpha\to \core_d(N^{M^\Uu_\alpha}_{\xi_\alpha})$ be the standard lifting map, where $d=\deg^\Tt_\alpha$.

 \begin{clm}\label{clm:no_dropping_m+1-ladder}
  Let $\Tt$ on $M$ be a tree via $\Psi_M$ of countable successor length $\eta+1$, and
  let $R\ins M^\Tt_\eta$.
  Then the following are equivalent:
  \begin{enumerate}[label=\tu{(}\roman*\tu{)}]
   \item $(0,\eta]^\Tt\cap\dropset^\Tt=\emptyset$ and $R=M^\Tt_\eta$,
   \item $(0,\eta]^\Tt\cap\dropset_{\deg}^\Tt=\emptyset$ and $R=M^\Tt_\eta$,
   \item\label{item:m+1-ladder} there are $\delta_0<\ldots<\delta_{m}<\OR^R$ such that:
   \begin{enumerate}[label=--]\item $R\sats$``$\delta_i$ is Woodin'' for all $i\leq m$,  and
   \item for all $\alpha<\alphagap$,
   there is $\rung\in(\delta_{m},\OR^R)$
   such that $\rung$ is a $R$-cardinal
   and there is $Q\pins R$ such that $\rung<\OR^Q$
   and $Q\sats$``$\rung$ is Woodin''
   and there is no above-$\rung$ iteration strategy for $Q$ in $\Ss_\alpha(\RR)$.
   \end{enumerate}
  \end{enumerate}
 \end{clm}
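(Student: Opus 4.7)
My plan is to adapt the comparison argument of Claim~\ref{clm:no_dropping_ladder} (the $m=0$ case) to the present setting with $m+1$ Woodin cardinals sitting below the ladder. One direction between (i) and (ii) is immediate from the inclusions among drop-sets used in the paper; the other will follow from the fact that both are equivalent to~(iii), so I focus on proving (i)$\Leftrightarrow$(iii).

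For (i)$\Rightarrow$(iii): the hypothesis yields an iteration embedding $i=i^\Tt_{0,\eta}\colon M\to R$. By construction, $M$ (the Q-local $L[\es]$-construction of $N=M^\Gamma_{m+1,\ladder}(x_{\cof})$) inherits from $N$ exactly $m+1$ Woodin cardinals $\delta_0^M<\ldots<\delta_m^M$, together with a ladder above $\delta_m^M$: for each $\alpha<\alphagap$ there is an $M$-cardinal $\rung>\delta_m^M$ and $Q\pins M$ with $\rung<\OR^Q$, $Q\sats$``$\rung$ is Woodin'', and no above-$\rung$ iteration strategy for $Q$ in $\Ss_\alpha(\RR)$ (via the P-construction correspondence with $N$). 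Setting $\delta_i=i(\delta_i^M)$ transfers the Woodin cardinals into $R$. For the ladder condition at a given $\alpha<\alphagap$, pick a witness $(\rung,Q)$ in $M$; if $i(Q)$ admitted an above-$i(\rung)$ strategy in $\Ss_\alpha(\RR)$, composing this with $i$ and resurrecting through the lifting $\pi^\Tt_\eta\colon R\to\core_d(N^{M^{\Uu_\Tt}_\eta}_{\xi^\Tt_\eta})$ together with $\Sigma_N$ would yield an above-$\rung$ strategy for $Q$ in $\Ss_\alpha(\RR)$, contradicting the choice of $(Q,\rung)$.

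For (iii)$\Rightarrow$(i) I argue contrapositively. First, condition~\ref{item:m+1-ladder} forces $R$ to have unboundedly many $R$-cardinals below $\OR^R$ (the rungs, as $\alpha\to\alphagap$), hence no largest cardinal, so $\rho_1^R=\OR^R$. If $R\pins M^\Tt_\eta$ is a proper segment, or if $b^\Tt$ drops anywhere in $(0,\eta]$, then by the comparison method of Claim~\ref{clm:no_dropping_ladder} applied above $i^\Tt_{0,\eta}(\delta_m^M)$, $R$ would appear as an iterable $\Gamma$-ladder over $R|\delta_m$ sitting strictly inside (or arising from a drop below) the structure of $M^\Tt_\eta$, contradicting the minimality of $M^\Gamma_{m+1,\ladder}$ via comparison with the correct image of the resurrected Q-structures. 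The same argument delivers (iii)$\Rightarrow$(ii) with no extra work. The main obstacle is this last step: making the comparison yield a contradiction uniformly in the presence of possible model and degree drops located between the $\delta_i$'s. This requires careful tracking of $\pi^\Tt_\eta$ on the $\Uu_\Tt$-side, the P-construction correspondence at each Woodin, and an inductive argument showing that a ``full'' cofinally-growing ladder of non-$\Ss_\alpha$-iterable Q-structures can only survive in a non-dropping, non-proper iterate of $M$.
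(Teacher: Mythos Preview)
Your argument for (i)$\Rightarrow$(iii) is essentially correct, though you conflate two distinct steps: first, the P-construction correspondence between $M$ and $N$ establishes that $M$ itself has the ladder property (the Q-structures $Q_n^M$ are P-constructions of $Q_n^N$, and their above-$\rung_n^N$ strategies are inter-translatable, so $Q_n^M$ has no above-$\rung_n^N$ strategy in $\Ss_{\alpha_n}(\RR)$ because $Q_n^N$ has none); second, the iteration map $i^\Tt_{0\eta}$ transfers this to $R$ by the pullback-of-strategies argument you describe.

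The real problem is your (iii)$\Rightarrow$(i). You write ``applied above $i^\Tt_{0,\eta}(\delta_m^M)$'', but if $b^\Tt$ drops---which is exactly the case you are trying to rule out---then $i^\Tt_{0,\eta}$ does not exist, so this expression is meaningless. Your fallback ``comparison method'' and the appeal to minimality of $M^\Gamma_{m+1,\ladder}$ are not worked out: $R$ is a segment of an iterate of $M$, not of $N$, and you give no mechanism for transferring a putative $(m{+}1)$-ladder structure in $R$ back to a proper segment of $N$ in the presence of drops. You yourself flag this as the ``main obstacle''; as written it is a genuine gap.

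The paper's route avoids this entirely. Since $x_{\cof}\in N$, being $(m{+}1)$-$\Gamma$-ladder-large is first-order over $N$ in the parameter $x_{\cof}$ (via the $P_n$-operator and $\varphi_{\cof}(n,x_{\cof})$-prewitnesses; cf.\ Remark~\ref{rem:pres_P_n(X)_under_hull,it_map}). Hence the analogue of the claim for trees $\Uu$ on $N$ via $\Sigma_N$ is immediate: non-dropping iterates inherit the property by elementarity, and proper segments or dropping iterates fail it by minimality of $N$. Both directions for $M$ then follow from the lifting maps $\pi^\Tt_\alpha$ together with the Q-local $L[\es]$-construction correspondence: $b^\Tt$ drops (in model or degree) iff $b^{\Uu_\Tt}$ does, and the ladder condition~(iii) for $R\ins M^\Tt_\eta$ transfers (via $\pi^\Tt_\eta$ and P-construction) to the corresponding condition on the $N$-side. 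No comparison argument is needed.
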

\begin{proof}
 Since trees $\Uu$ via $\Sigma_N$ have the corresponding properties (since they are appropriately first-order relative to $x_{\cof}$), this follows in a straightforward manner from the existence of the lifting maps $\pi_\alpha$
 and properties of the Q-local $L[\es]$-construction.
\end{proof}

 \begin{clm}
  $\RR\cap M\sub\OD^\RR_{\alphagap,m+3}$.
 \end{clm}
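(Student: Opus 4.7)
The plan is to mirror the argument establishing $\RR\cap M\sub\OD^\RR_{\alphagap 2}$ in the case $m=0$, raising the pointclass by $m+1$ real quantifiers and invoking Claim \ref{clm:no_dropping_m+1-ladder} in place of the earlier Claim \ref{clm:no_dropping_ladder}. Fix $x\in\RR\cap M$. By condensation choose a countable sound proper segment $R\pins M|\om_1^M$ with $x\in R$ and $\rho_\om^R=\om$. The aim is to characterize $R$, and hence $x$, by a $(\Sigma^\RR_{m+3})^{\Ss_{\alphagap}(\RR)}$ formula from the ordinal $\OR^R$ presented as a wellorder $w\in\WO_{\OR^R}$: I claim $R$ is the unique countable sound $\om$-premouse $R'$ with $\OR^{R'}=|w|$, $\rho_\om^{R'}=\om$, and $R'$ $\Pi_{m+2}^{\alphagap}$-iterable. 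Because $\Pi_{m+2}^{\alphagap}$-iterability is $(\Pi^\RR_{m+2})^{\Ss_{\alphagap}(\RR)}$, as noted after its definition, an existential real quantifier over codes of $R'$ yields $\Sigma^\RR_{m+3}$ complexity, and each integer value of $x$ is then read off uniformly from $R'$.

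First I would verify that $R$ itself is $\Pi_{m+2}^{\alphagap}$-iterable. Any $k$-maximal tree $\Tt$ on $R$ lifts via $\Psi_M$ to a tree on $M$, so $\Tt$ has wellfounded models; at each limit $\lambda<\lh(\Tt)$ the $\Psi_M$-branch selects the correct Q-structure $Q^*_\lambda$ for $M(\Tt\rest\lambda)$. Given a candidate $Q_\lambda$ satisfying the hypotheses of $\Pi_{m+2}^{\alphagap}$-iterability --- a $\delta(\Tt\rest\lambda)$-sound Q-structure that is $\Pi_{m+1}^{\alphagap}$-iterable above $\delta(\Tt\rest\lambda)$ --- one shows $Q_\lambda=Q^*_\lambda$ by inductive uniqueness of iterable Q-structures, applying the analogue of the present claim one level down (for $M_{m,\ladder}^\Gamma$-style Q-structures). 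These matching branch choices give the $\Tt\rest\lambda$-cofinal branch $b$ with $Q_\lambda\ins M^{\Tt\rest\lambda}_b$ required by the definition.

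Next I would prove uniqueness. Suppose for contradiction $R'\neq R$ shares $\OR^R$ and satisfies the defining conditions. Coiterate $R$ with $R'$ using their $\Pi_{m+2}^{\alphagap}$-strategies, producing trees $\Tt,\Tt'$. Since both sides are sound and project to $\om$ with equal heights, the standard coiteration analysis forces at least one side --- say $\Tt$ --- to have non-dropping main branch with $M^\Tt_\infty$ containing the corresponding iterate of $R'$ as a segment. Lifting $\Tt$ to a $\Psi_M$-tree $\Uu$ on $M$, the output $M^\Uu_\infty$ would acquire a proper segment $P$ which, via the image of the $m+1$ Woodins and the ladder above them in $N$, witnesses exactly the configuration of item \ref{item:m+1-ladder} of Claim \ref{clm:no_dropping_m+1-ladder}; but that claim rules this out for proper initial segments of $M^\Uu_\infty$, since $R$ was chosen below $\om_1^M$ and hence well below the $\Psi_M$-image of the Woodins of $M$. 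Contradiction.

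The main obstacle is step one: supplying the inductive identification of the relevant Q-structures at limits as uniquely $\Pi_{m+1}^{\alphagap}$-iterable. Setting this up cleanly means carrying the mouse set theorem through on $m$ by induction, matching $k$-$\Gamma$-ladder-largeness at each $k\le m$ with $\Pi_{k+1}^{\alphagap}$-iterability of the relevant segments of $M_{k,\ladder}^\Gamma$-style mice. Once this inductive scaffolding is in place, the uniqueness step and the complexity bookkeeping for $\OD^\RR_{\alphagap,m+3}$ are routine extensions of the $m=0$ argument.
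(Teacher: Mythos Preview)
Your overall plan is the same as the paper's: show each $R\pins M|\om_1^M$ with $\rho_\om^R=\om$ is the unique $\Pi^{\alphagap}_{m+2}$-iterable $\om$-premouse of its height, and read off the $\Sigma^\RR_{m+3}$ definition. Your step 1 and your final paragraph (the inductive scaffolding matching $k$-$\Gamma$-ladder-smallness with $\Pi^{\alphagap}_{k+1}$-iterability) are on target.

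Step 2, however, is mis-argued, and in a way that inverts the role of Claim \ref{clm:no_dropping_m+1-ladder}. First, there is no ``$\Pi^{\alphagap}_{m+2}$-strategy'' for $R'$; $\Pi^{\alphagap}_{m+2}$-iterability is a property, not a strategy, so the coiteration cannot simply be run. The content of the argument is precisely to show that the comparison \emph{can} be run: at each limit $\lambda$, the true Q-structure $Q^*_\lambda$ on the $R$-side (supplied by $\Psi_M$) must be shown to be $\Pi^{\alphagap}_{m+1}$-iterable above $\delta(\Tt\rest\lambda)$, so that it can be fed into the $\Pi^{\alphagap}_{m+2}$-iterability of $R'$ to obtain the branch on the $R'$-side. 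This is exactly where Claim \ref{clm:no_dropping_m+1-ladder} enters: since $R\pins M|\om_1^M$, no model along $\Tt$ (and hence no Q-structure $Q^*_\lambda$) contains a segment with $m+1$ Woodins and a ladder on top; this smallness, combined with the inductive scaffolding you mention, gives the required $\Pi^{\alphagap}_{m+1}$-iterability of $Q^*_\lambda$. Once the comparison succeeds, the conclusion is immediate: two sound $\om$-premice projecting to $\om$ with the same ordinal height must be equal.

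Your paragraph instead supposes the comparison succeeds, has $\Tt$ non-dropping, and then claims the lifted tree on $M$ produces a \emph{proper} segment witnessing item \ref{item:m+1-ladder} of Claim \ref{clm:no_dropping_m+1-ladder}. That is backwards: Claim \ref{clm:no_dropping_m+1-ladder} is used to \emph{prevent} such structures from appearing along the $R$-side (ensuring the Q-structures stay small), not to manufacture one for a contradiction. No mechanism in a successful comparison of $R$ with $R'$ produces an $(m+1)$-ladder-large proper segment of an $M$-iterate; the ``image of the $m+1$ Woodins and the ladder above them in $N$'' is simply not in play, since all of this lives above $\om_1^M$ and is never moved into the picture by a tree on $R\pins M|\om_1^M$.
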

 \begin{proof}
 It is enough to see that for each $P\pins M$ with $\rho_\om^P=\om$, letting $\xi=\OR^P$,
 $P$ is the unique $\Pi_{n+2}^{\alphagap}$-iterable $\om$-premouse such that $\OR^{P'}=\xi$. But this follows from Claim \ref{clm:no_dropping_m+1-ladder} via a quite routine comparison argument.
 (Compare $P$ with another candidate $P'$.
 Since $P\pins M|\om_1^M$,
 the tree $\Tt$ on $P$ drops, for no $\eta<\lh(\Tt)$
 is there $R\ins M^\Tt_\eta$  and $\delta_0<\ldots<\delta_m<\OR^R$ as in clause \ref{item:m+1-ladder} of Claim \ref{clm:no_dropping_m+1-ladder}. Because of this, the $\Pi^{\alphagap}_{n+2}$-iterability of $P'$ is enough to complete the comparison.)
\end{proof}

\begin{clm}\label{clm:OD_alhphagap,m+3_sub_M}
 $\OD_{\alphagap,m+3}\sub M$.
\end{clm}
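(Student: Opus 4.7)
The plan is to mirror the structure of the $n = 2$ proof above, with the $m + 1$ extra Woodins of $N$ used to handle the $m + 1$ additional quantifier alternations above the innermost $\Pi_2$-matrix of a $\Sigma_{m+3}$ formula. I first run the same dichotomy: either (I) no successor-length iterate $R$ of $M$ via $\Psi_M$ admits an $(\alphagap,\RR)$-cofinal real in a collapse extension $R[g]$, or (II) such a tuple exists. Case (II) relativizes verbatim to the argument of Case \ref{case:alphagap,R-cofinal_real}: letting $x$ code $(R|\lambda,g)$, one shows $\OD_{\alphagap,m+3}(x)\sub R[g]$ and then $\OD_{\alphagap,m+3}\sub R\sub M$ by homogeneity of the collapse. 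The real content is Case (I).

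In Case (I), I would define by recursion on $k$, for $2\le k\le m+3$, a notion of \emph{$(k,\varphi(x))$-prewitness}, lifting Definitions \ref{dfn:varphi(x)-prewitness_tree_version} and \ref{dfn:fine_Pi_2-witness}. For a $\Pi_k$ formula $\varphi$, a $(k,\varphi(x))$-prewitness is an $x_{\cof}$-premouse $P$ carrying $k-2$ Woodin cardinals $\delta_0<\ldots<\delta_{k-3}$ below a partial ladder, together with a tree $\Delta$ of depth $k-2$ whose deepest nodes hold a $\Pi_2$-prewitness (as in Definition \ref{dfn:varphi(x)-prewitness_tree_version}) above $\delta_{k-3}$ for the innermost $\Pi_2$-matrix of $\varphi$, and whose successively shallower levels thread genericity bookkeeping over $\BB_{\delta_i}^{Q_i^P}$ via the obvious analog of the game $\mathscr{G}$ from Subcase \ref{scase:M_has_alphagap,R-cofinal_real}, using the antichain analysis of \S\ref{sec:antichains}. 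The key lemma, proved by induction on $k$, is the analog of Lemma \ref{lem:equivalence_coarse_countable_cof}: for $x\in\RR^R$ with $R$ an iterate as in Case (I), $\Ss_\alphagap(\RR)\sats\varphi(x)$ iff there is a $(k,\varphi(x))$-prewitness realized as a proper segment of $R$. The forward direction is rank induction on the tree of level-$(k-1)$ prewitnesses, directly mirroring the $(N,\Delta)$-construction in Lemma \ref{lem:equivalence_coarse_countable_cof}; the backward direction uses a genericity iteration at $\delta_{k-3}$ to reduce to level $k-1$, and at the base $k=2$ produces the same pair of contradictions as the last paragraph of the proof of Lemma \ref{lem:equivalence_coarse_countable_cof} (either an infinite drop along the unique branch, or the construction of a proper-segment $\Gamma$-ladder).

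With this machinery in place, the claim follows exactly as in Case \ref{case:no_alphagap,R-cofinal_real} of the $n=2$ proof. For $y\in\OD_{\alphagap,m+3}$ defined by a $\Sigma_{m+3}$ formula $\varphi$ and $\xi<\om_1$, let $\Tt$ be the linear iteration of $M$ of length $\xi+1$ at its least measurable and $R=M^\Tt_\infty$; then for each $j<\om$, $j\in y$ is equivalent to the existence of an $(m+3,\psi_j(x))$-prewitness sitting as a proper segment of $R|\theta^{+R}$, for sufficiently large strong cutpoint $\theta$ of $R$, where $\psi_j$ is the appropriate relativized $\Pi_{m+3}$ formula expressing ``$j\notin$ the set defined by $\varphi$''. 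The analog of Claim \ref{clm:R|theta^+=Lp(R|theta)}, now for iterates of the $(m+1)$-Woodin-plus-ladder mouse $M$ (applying Claim \ref{clm:no_dropping_m+1-ladder} for preservation of the Woodin-plus-ladder structure under non-dropping branches of $\Tt$), ensures that $R|\theta^{+R}$ contains all the iteration strategies for proper segments that the prewitness existence refers to. Hence $y$ is definable over $R$, and so $y\in R\sub M$.

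The main obstacle is formulating the intermediate-level prewitnesses so that both condensation (as in the $\cHull_1^{N_{n+1}}$-step of Lemma \ref{lem:equivalence_coarse_countable_cof}) and the genericity iteration (for the backward direction) pass coherently through all $m+1$ Woodin layers. Specifically, in the backward direction at level $k$, one must check that the drops in model along the unique branch of the putative counterexample iteration occur at cutpoints above the relevant lower Woodin (or above $x_{\cof}$ at the base), so that the level-$(k-1)$ prewitness stratification on the common part model remains intact. This is exactly what Claim \ref{clm:no_dropping_m+1-ladder} delivers, together with the fact that each rung of the ladder is a non-measurable cutpoint of $M$ (by $\Sigma_1^N$-definability of $\Lp_\Gamma$ and the usual P-construction preservation of cutpoints), so the drops can be arranged at such cutpoints without destroying the forming partial-Woodin-plus-ladder structure.
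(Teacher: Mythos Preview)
Your approach diverges substantially from the paper's, and there is a real gap.

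The paper does \emph{not} run the Case I/II dichotomy from the $n=2$ argument. Instead it exploits the Woodin cardinals of $M$ directly. For $m=0$ it iterates $M$ linearly to encode $\xi$, then characterizes ``$\ell\in y$'' by a forcing statement in $M'|(\delta')^{+M'}$: there is a condition $p\in\BB_{\delta_0',>\xi}$ forcing that the generic real $(\dot t,\dot z,\dot w,\dot x)$ has $\dot w\in\WO_\xi$, that $(\dot x,\varphi_{\cof})$ is $\alphagap$-ascending (this is what the clauses about $\varphi_{\cof}$- and $\varphi^*_{\cof}$-prewitnesses in $(M'|\eta')[\dot g]$ accomplish), and that there is an above-$\delta'$ $\psi'(\dot t,\dot z,\dot w,\dot x)$-prewitness in the sense of Definition~\ref{dfn:varphi(x)-prewitness_tree_version}, \emph{computed relative to the generic pair $(\dot x,\varphi_{\cof})$}. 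For higher $m$ one simply alternates: the outermost $\exists^\RR$ is ``there is $p\in\BB_{\delta_0'}$'', the next $\all^\RR$ is ``$\BB_{\delta_1'}$ forces'', and so on, with the innermost $\Pi^\RR_2$ (or its negation) handled by the presence or absence of a tree-type prewitness above the top Woodin. No hierarchy of $(k,\varphi(x))$-prewitnesses and no game $\mathscr{G}$ are needed.

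Your sketch has a genuine problem at the base. You say the deepest nodes of your $(k,\varphi(x))$-prewitness hold a $\Pi_2$-prewitness in the sense of Definition~\ref{dfn:varphi(x)-prewitness_tree_version} above $\delta_{k-3}$. But that definition is relative to a fixed $\alphagap$-ascending pair $(x_{\cof},\varphi_{\cof})$ with $x_{\cof}$ in the base set $X$; here the relevant base is $P|\delta_{k-3}$ for $P$ a segment of an iterate of $M$, and $M$ is the Q-local $L[\es]$-construction of $N$ over $\emptyset$, so $x_{\cof}\notin P|\delta_{k-3}$ in general. The paper's device of forcing a generic $\dot x$ to be $\alphagap$-ascending and then interpreting the prewitness relative to $(\dot x,\varphi_{\cof})$ is precisely what makes the tree-prewitness available above the Woodins of $M$; without it your base step does not get off the ground. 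Relatedly, your claim that Case~II ``relativizes verbatim'' is not right: the $n=2$ Case~II reduces to Subcase~\ref{scase:M_has_alphagap,R-cofinal_real}, which treats only $\Sigma_2$, so for $\Sigma_{m+3}$ one still owes an argument for the extra $m+1$ quantifier alternations --- and your machinery for that is confined to Case~I.
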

\begin{proof}
Suppose first that $m=0$, so $N=M_{1,\ladder}^\Gamma(x_{\cof})$. Let $y\in\OD_{\alphagap,3}$.
Let $\xi<\om_1$ and $\varphi$ be a $\Sigma_3$ formula such that for all $w\in\WO_\xi$ and all $z\in\RR$, we have
\[ z=y\iff\Ss_{\alphagap}(\RR)\sats\varphi(z,w).\]
Let $\varphi'$ be the natural $\Sigma^\RR_3$
formula such that for all $x\in\RR$, if $(x,\varphi_{\cof})$ is $\alphagap$-ascending then 
\[ \Ss_{\alphagap}\sats\all^\RR z,w\ \big[\varphi(z,w)\iff\varphi'(z,w,x)\big].\]
Let $\psi'$ be $\Pi^\RR_2$, such that
\begin{equation}\label{eqn:varphi'_iff_exists^RR_t_psi'} \varphi'(z,w,x)\iff\exists^\RR t\ \psi'(t,z,w,x).\end{equation}

Now $\delta_0^N$ is the least Woodin cardinal of $M$ (so write $\delta_0^M=\delta_0^N$), $M|\delta_0^M$ is definable over $N|\delta_0^N$, $N|\delta_0^N$ is extender algebra generic over $M$, and $M$ is the P-construction of $N$ above $M|\delta_0^M$. Since $\Lp_{\Gamma}(N|\delta_0^N)\pins\Lp_{\Gamma}^{N|\delta_0^{+N}}$
and $(\delta_0^M)^{+M}=(\delta_0^N)^{+N}$,
$\Lp_{\Gamma}(M|\delta_0^M)$ is just the P-construction of $\Lp_{\Gamma}(N|\delta_0^N)$ over $M|\delta_0^M$, and $\Lp_{\Gamma}(M|\delta_0^M)\pins M|(\delta_0^M)^{+M}$. Let $\zeta=\OR(\Lp_\Gamma(M|\delta_0^M))=\OR(\Lp_\Gamma(N|\delta_0^N))$.
Note $\cof^N(\zeta)=\om$
(since $x_{\cof}\in N$).
Let $\mu=\cof^M(\zeta)$.
Then $\mu$ is not measurable in $M$,
since every measurable cardinal of $M$
is also measurable in $N$. So letting $\Tt$
be any countable length tree on $M$ via $\Psi_M$
of length $\eta+1$,
if $(0,\eta]^\Tt\cap\dropset^\Tt=\emptyset$ (so $(0,\eta]^\Tt\cap\dropset_{\deg}^\Tt=\emptyset$),
and $\Uu=\Uu_\Tt$,
then $i^\Tt_{0\eta}$ and $i^\Uu_{0\eta}$ are both continuous at $\zeta$,
and since $\pi_\eta\com i^\Tt_{0\eta}=i^\Uu_{0\eta}\rest M$, it easily follows that \[i^\Tt_{0\eta}(\Lp_\Gamma(M|\delta_0^M))=\Lp_\Gamma(M^\Tt_\eta|i^\Tt_{0\eta}(\delta_0^M)).\]

Let $\Tt$ be the length $(\xi+1)$ linear iteration on $M$ at its least measurable, and $M'=M^\Tt_\xi$.
Let $\delta'=i^\Tt_{0\xi}(\delta)$.
So $\delta'$ is the least Woodin of $M^\Tt_\xi$ and $i^\Tt_{0\xi}(\Lp_\Gamma(M|\delta_0^M))=\Lp_\Gamma(M^\Tt_\xi|\delta')$.
Let $\eta'=\OR(\Lp_\Gamma(M^\Tt_\xi|\delta'))$.

Let $n<\om$.
It is enough to see that for each $\ell<\om$, we have:

\begin{sclm}\label{sclm:compute_y_in_M'} $\ell\in y$
iff $M'|(\delta')^{+M'}\sats$``there is $p\in \BB_{\delta',>\xi}$   which forces that, letting $\dot{g}$ be the standard name for the generic filter and $(\dot{t},\dot{z},\dot{w},\dot{x})$ the standard name for the generic real,
we have:
\begin{enumerate}\item $\dot{w}\in\WO_{\check{\xi}}$,
 \item For every $k<\om$ there is an above-$\delta'$, $\varphi_{\cof}(k,\dot{x})$-prewitness $J$ such that $J\pins (M'|\eta')[\dot{g}]$.
 \item For every $k<\om$ there is an above-$\delta'$, $\varphi^*_{\cof}(k,\dot{x})$-prewitness $J$ such that $J\pins (M'|\eta')[\dot{g}]$,
 where $\varphi^*_{\cof}(u,v)$ asserts ``there is an ordinal $\gamma$ such that $\Ss_\gamma(\RR)\sats\varphi_{\cof}(a,b)\wedge\neg\varphi_{\cof}(u+1,v)$''.
 \item There is no above-$\delta'$,
 $\psi(\dot{x})$-prewitness $J$ such that $J\pins (M'|\eta')[\dot{g}]$, where $\psi(u)$ asserts ``there is an ordinal $\gamma$ such that $\Ss_\gamma(\RR)\sats\all^\om k\ \varphi_{\cof}(k,u)$''.
 \item\label{item:Pi_2-prewitness_m=0} There is an above-$\delta'$, $\psi'(\dot{t},\dot{z},\dot{w},\dot{x})$-prewitness $J$,
 as defined in \ref{dfn:varphi(x)-prewitness_tree_version}, but relative to the putatively $\alphagap$-cofinal pair $(\dot{x},\varphi_{\cof})$, such that $J\pins L[\es,\dot{g}]$.\footnote{Here $\psi'$ was the $\Pi_2^\RR$ formula from line (\ref{eqn:varphi'_iff_exists^RR_t_psi'}). Recall that we are presently working in $M'|(\delta')^{+M'}$, so $L[\es]$ denotes this model. And the phrase ``relative to the putatively $\alphagap$-cofinal pair'' means that we use $(\dot{x},\varphi_{\cof})$ in exactly the manner in which we earlier used $(x_{\cof},\varphi_{\cof})$ in Definition \ref{dfn:varphi(x)-prewitness_tree_version}.}
 \item $\ell\in \dot{z}$.''
\end{enumerate}
\end{sclm}

\begin{proof}
Suppose first that $M'|(\delta')^{+M'}$ satisfies
the indicated statement. Let $g$ be $(M',\BB_{\delta',>\xi})$-generic with $p\in g$.
Let $(t,z,w,x)$ be the generic real. It is enough to see that $z=y$, and for this,  it is enough to see that  $w\in\WO_\xi$, $(x,\varphi_{\cof})$ is $\alphagap$-ascending and $\Ss_{\alphagap}(\RR)\sats\psi'(t,z,w,x)$. But considering what $p$ forces, certainly $w\in\WO_\xi$,
and because $M'|\eta'=\Lp_{\Gamma}(M'|\delta')$,
also $(x,\varphi_{\cof})$ is $\alphagap$-ascending.
Finally, we have $J\pins M'|(\delta')^{+M'}[g]$
which is an above-$\delta'$, $\psi'(t,z,w,x)$-prewitness (in the sense of \ref{dfn:varphi(x)-prewitness_tree_version}) relative to $(x,\varphi_{\cof})$. Since $(x,\varphi_{\cof})$ is indeed $\alphagap$-ascending,
and therefore Lemma \ref{lem:equivalence_coarse_countable_cof}
holds with respect to $(x,\varphi_{\cof})$ (replacing  $(x_{\cof},\varphi_{\cof})$), 
it is enough to see that \[M'[g]=M^\Gamma_{\ld}((M'|\delta')[g]),\] with the $M^\Gamma_{\ld}$-operator also defined relative to $(x,\varphi_{\cof})$.
But this follows easily enough from the facts that:
\begin{enumerate}[label=--]\item $\Lp_\Gamma(M'|\delta')[g]=\Lp_\Gamma((M'|\delta')[g])$,
\item  $\alphagap$ is the least ordinal $\alpha$ such that every proper segment of $\Lp_\Gamma((M'|\delta')[g])$ is above-$\delta'$ iterable in $\Ss_\alpha(\RR)$,
and
\item $\eta'=\OR(\Lp_\Gamma((M'|\delta')[g])$
is the least ordinal $\eta''$ such that for each $k<\om$,
there is an above-$\delta'$, $\varphi_{\cof}(k,x)$-prewitness $J$ with $(J\pins M'|\eta'')[g]$,
\item $M'[g]$ is sound as an $(M'|\delta',g)$-premouse.
\end{enumerate}

Now suppose that $\ell\in y$. Then because we can iterate $M'$, above $\xi+1$, to make $(t,y,w,x_{\cof})$ generic for the image of $\BB_{\delta',>\xi}$,
and $(\delta')^{+M'}<\OR^{M'}$, and by the calculations used in the previous direction,
it is easy to see that $M'|(\delta')^{+M'}$ satisfies the desired statement, completing the proof of the subclaim, and hence that of Claim \ref{clm:OD_alhphagap,m+3_sub_M} in case $m=0$.
\end{proof}

Now suppose $m=1$, so $N=M^\Gamma_{2,\ladder}(x_{\cof})$. We proceed as before, but ``$3$'' is replaced by ``$4$'',
and in particular, $\varphi$ is $\Sigma_4$, $\varphi'$ is $\Sigma^\RR_4$ and $\psi'$ is $\Pi^\RR_3$. Let $\tau'$ be $\Sigma^\RR_2$,
such that
\begin{equation}\label{eqn:psi'_and_tau'}\psi'(t,z,w,x)\iff\all^\RR s\ \tau'(s,t,z,w,x).\end{equation}

Again $\delta_0^M=\delta_0^N$ is the least Woodin of $M$,
etc, and $M$ is the P-construction of $N$ above $M|\delta_0^M$ (not just above $\delta_1^N$).
We have $\xi$ as before, and iterate $M$ to $M'$
at the least measurable of $M$ as before, with tree $\Tt$.
Let $\delta_i'=i^\Tt_{0\xi}(\delta_i^M)$,
for $i=0,1$. Let $\eta_i'=\OR(\Lp_\Gamma(M^\Tt_\xi|\delta_i'))$.

\begin{sclm}\label{sclm:compute_y_in_M'_m=1}
 $\ell\in y$ iff $M'|(\delta_1')^{+M'}\sats$``there is $p\in\BB_{\delta_0',>\xi}$ which forces that,
 letting $\dot{g}_0$ be the standard name for the generic filter and $(\dot{t},\dot{z},\dot{w},\dot{x})$ the standard name for the generic real, 
 we have:
 \begin{enumerate}
  \item $\dot{w}\in\WO_{\check{\xi}}$,
 \item For every $k<\om$ there is an above-$\delta_0'$, $\varphi_{\cof}(k,\dot{x})$-prewitness $J$ such that $J\pins (M'|\eta_0')[\dot{g}_0]$.
 \item For every $k<\om$ there is an above-$\delta_0'$, $\varphi^*_{\cof}(k,\dot{x})$-prewitness $J$ such that $J\pins (M'|\eta_0')[\dot{g}_0]$,
 where $\varphi^*_{\cof}(u,v)$ asserts ``there is an ordinal $\gamma$ such that $\Ss_\gamma(\RR)\sats\varphi_{\cof}(a,b)\wedge\neg\varphi_{\cof}(u+1,v)$''.
 \item There is no above-$\delta_0'$,
 $\psi(\dot{x})$-prewitness $J$ such that $J\pins (M'|\eta'_0)[\dot{g}_0]$, where $\psi(u)$ asserts ``there is an ordinal $\gamma$ such that $\Ss_\gamma(\RR)\sats\all^\om k\ \varphi_{\cof}(k,u)$'',
 \end{enumerate}
 and $\BB_{\delta_1',>\delta_0'}$ forces that,
 letting $\dot{g}_1$ be the standard name for the generic filter and $\dot{s}$ that for the generic real, we have:
 \begin{enumerate}[label=\tu{(}\alph*\tu{)}]
 \item\label{item:no_Pi_2-prewitness_m=1} There is \underline{no} above-$\delta'_1$, $\neg\tau'(\dot{s},\dot{t},\dot{z},\dot{w},\dot{x})$-prewitness $J$,
 as defined in \ref{dfn:varphi(x)-prewitness_tree_version}, but relative to $(\dot{x},\varphi_{\cof})$, such that $J\pins L[\es,\dot{g}_0,\dot{g}_1]$.\footnote{Recall $\tau'$
 is the $\Sigma_2^\RR$ formula in line (\ref{eqn:psi'_and_tau'}), so $\neg\tau'$ is $\Pi^\RR_2$.}
 \item $\ell\in \dot{z}$.''
 \end{enumerate}

\end{sclm}

The subclaim is proved much like Subclaim \ref{sclm:compute_y_in_M'}, and completes the proof of Claim \ref{clm:OD_alhphagap,m+3_sub_M} in case $m=1$.
(Note that aside from the extra Woodin,
a key difference between the cases of $m=0$ and $m=1$
arises between clause \ref{item:Pi_2-prewitness_m=0} of Subclaim \ref{sclm:compute_y_in_M'} and clause \ref{item:no_Pi_2-prewitness_m=1} of Subclaim \ref{sclm:compute_y_in_M'_m=1}.)

The cases $m>1$ are likewise, alternating in details with parity. This completes the proof of Claim \ref{clm:OD_alhphagap,m+3_sub_M}.
\end{proof}
This completes the proof of Theorem \ref{tm:lightface_mouse_set} for $n>2$.
\end{proof}
\section{Admissible gaps}\label{sec:admissible_gaps}

In this section, we will adapt arguments of previous sections to admissible gaps $[\alphagap,\betagap]$.
That is, $\Ss_{\alphagap}(\RR)$ is admissible.
So $[\alphagap,\betagap]$ is either a weak or strong gap. If it is weak,
let $\wh{\betagap}=\betagap$, and if strong, let $\wh{\betagap}=\betagap+\om$. 
We write $\Ss=\Ss_{\wh{\betagap}}(\RR)$.

We make use of the $\Pg$-operator associated to $\Ss$, defined in \cite{gaps_as_derived_models}.
Let $a\in\RR$  be such that the $\Pg$-operator $X\mapsto\Pg(X)$ is defined for transitive swo'd $X\in\HC$ with $a\in X$, with $P=\Pg(X)$ a sound $\om$-small $X$-mouse projecting to $X$ with $\Lp_{\Gammagap}(X)=P|\Theta_{X^{<\om}}^P$\footnote{Here $\Theta_{X^{<\om}}^P$ is the supremum of all ordinals which are the surjective image of $X^{<\om}$ in $P$. Actually we could restrict our attention to self-wellordered $X$, in which case $\Theta_{X^{<\om}}^P=\rank(X)^{+P}$.} and $P$ has $\om$-many Woodin cardinals $>\rank(X)$ in ordertype (so $P=L_\xi[P|\lambda^{P}]$
where $\lambda^{P}$ is the sup of those Woodins and $\xi\in\OR$). Let $x_{\cone}\in\RR$ be such that $\Pg(a)\leq_T x_{\cone}$. Let $\mathscr{C}$
be the transitive swo'd $\HC$-cone above $x_{\cone}$; that is,
the set of all transitive swo'd $X\in\HC$ with $x_{\cone}\in X$.  Also as in \cite{gaps_as_derived_models},
let $\beta^*$ be the end of the gap in the $\M^{\alphagap}$-hierarchy.

Let's suppose for simplicity that $\rho_1^{\Pg(x_{\cone})}=\om<\lambda<\OR^{\Pg(x_{\cone})}$
and $\lambda^{\Pg(x_{\cone})}\notin p_1^{\Pg(x_{\cone})}$, and letting $\beta^*$ be as in \cite{gaps_as_derived_models}, $\mSigma_1^{\M_{\beta^*}}$ is $\mu$-reflecting and $\beta^*$
is closed enough that $\beta^*=\wh{\betagap}$.
So for all  $X\in\mathscr{C}$, we have $\lh(p_1^{\Pg(X)})=\lh(p_1^{\Ss})$.
And by \cite{gaps_as_derived_models}, we can fix a $\Sigma_1$ formula $\psi_{\Sat}$ such that for all $X\in\mathscr{C}$,  all $\rSigma_1$ formulas $\varphi$
and all $\vec{x}\in X^{<\om}$, we have
\begin{equation}\label{eqn:psi_Sat} \Pg(X)\sats\varphi(\vec{x},p_1^{\Pg(X)})\iff\Ss\sats\psi_{\Sat}(\varphi,\vec{x},X,p_1^{\Ss}).\end{equation}

Let $t_a$ be the theory $\Th_{\rSigma_1}^{\Pg(a)}(\{a,p_1^{\Pg(a)}\})$.

\begin{lem}
There is an $\rSigma_1$ formula $\varphi_0$
such that for all $X\in\mathscr{C}$, letting $P=\Pg(X)$, we have:
\begin{enumerate}\item\label{item:Pg(X)_is_least_satisfying_all_n_varphi_0} $P$ is the least $P'\ins P$ with $p_1^{P}\in P'$ and 
$P'\sats\all^\om n\ \varphi_0(n,t_a,p_1^{\Pg(X)})$,
and 
\item\label{item:varphi_0_is_str_inc} letting $\alpha_n^{P}$ be the least $\alpha$ such that $\max(p_1^{P})<\alpha$ and  $P|\alpha\sats\varphi_0(n,t_a,p_1^{P})$,
then $\alpha_n^{P}<\alpha_{n+1}^{P}$ for all $n<\om$.\footnote{Here if $P=\Jj(P|\beta)$ for some $\beta$,
then we allow $\alpha_n^{P}$ to be a successor ordinal, with $P|\alpha_n^N$ being defined via Jensen's $\Ss$-hierarchy.}
\end{enumerate}
\end{lem}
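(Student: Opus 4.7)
The plan is to exploit $\rho_1^{\Pg(X)}=\om$ to produce, uniformly in $X\in\mathscr{C}$, a $\rSigma_1$-definable strictly increasing $\om$-sequence cofinal in $\OR^{\Pg(X)}$, and to package it as the sequence of least witnesses to $\varphi_0(n,t_a,p_1^{\Pg(X)})$ as $n$ varies. The enabling fact is that, since $P=\Pg(X)$ projects to $\om$, its $\rSigma_1$ theory with parameter $\{a,p_1^P\}$, call it $T^P$, is not an element of $P$; hence for any $\eta<\OR^P$ there must be a formula in $T^P$ whose first-appearance level in $P$ exceeds $\eta$. The role of the parameter $t_a$ is to let us $\rSigma_1$-recover $a$ from $t_a$ without explicitly mentioning $X$, so that $\varphi_0$ is genuinely uniform across the cone.

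Concretely, fix a recursive enumeration $\langle\chi_m\rangle_{m<\om}$ of the $\rSigma_1$ formulas in two free variables of the passive premouse language. I take $\varphi_0(n,t,p)$ to assert
\[
  \exists\,\gamma_0,\ldots,\gamma_n\;\exists\,k_0,\ldots,k_n\in\om\;\bigl(\,\max(p)<\gamma_0<\cdots<\gamma_n\;\wedge\;\text{the }k_i\text{ are pairwise distinct}\;\wedge\;\forall i\le n\;I|\gamma_i\sats\chi_{k_i}(a(t),p)\,\bigr),
\]
where $a(t)$ is the real $\rSigma_1$-decoded from $t$ by ``$a(t)$ is the unique real $r$ such that $t=\Th_{\rSigma_1}^{\Pg(r)}(\{r,p_1^{\Pg(r)}\})$''. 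Soundness and projection of $\Pg(r)$ to $\om$ make the decoding $\rSigma_1$ and uniform; internal $\rSigma_1$-satisfaction at a bounded level is uniformly $\rSigma_1$, so $\varphi_0$ is $\rSigma_1$. With this choice, the least $\alpha>\max(p_1^P)$ with $P|\alpha\sats\varphi_0(n,t_a,p_1^P)$ is the $(n{+}1)$-st distinct first-appearance level in $P$, above $\max(p_1^P)$, of some $\chi_k(a,p_1^P)$; the clause $\gamma_0<\cdots<\gamma_n$ then gives conclusion~(\ref{item:varphi_0_is_str_inc}) at once.

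For conclusion~(\ref{item:Pg(X)_is_least_satisfying_all_n_varphi_0}), $T^P\notin P$ forces the set of first-appearance levels to be cofinal in $\OR^P$ (otherwise $T^P$ would be computable in $P$ from the supremum of those levels and would belong to $P$). Hence $\sup_n\alpha_n^P=\OR^P$, and in particular $P\sats\all^\om n\,\varphi_0(n,t_a,p_1^P)$. Conversely, any $P'\pins P$ with $p_1^P\in P'$ has $\OR^{P'}<\OR^P$, so $\alpha_n^P>\OR^{P'}$ for all large enough $n$, giving $P'\not\sats\varphi_0(n,t_a,p_1^P)$; so $P$ is the least such segment.

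The main obstacle is the $\rSigma_1$ recovery of $a$ from $t_a$, uniformly across $\mathscr{C}$. Here the simplifying hypotheses stated just before the lemma---$\rho_1^{\Pg(x_{\cone})}=\om$, $\lambda^{\Pg(x_{\cone})}\notin p_1^{\Pg(x_{\cone})}$, and the $\mu$-reflecting/closure properties that pin $\beta^*=\wh{\betagap}$---are precisely what make $t_a$ uniquely code $\Pg(a)$ (and hence $a$) in a way that is $\rSigma_1$-definable and stable across $X\in\mathscr{C}$. Once the decoding is set up, the rest of the argument is routine bookkeeping with first-appearance levels in a projecting premouse, and the cofinality/minimality reasoning above goes through unchanged.
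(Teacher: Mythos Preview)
Your formula $\varphi_0$ does not give what you claim. You assert that ``the least $\alpha>\max(p_1^P)$ with $P|\alpha\sats\varphi_0(n,t_a,p_1^P)$ is the $(n{+}1)$-st distinct first-appearance level'', but nothing in your formula forces the $\gamma_i$ to be first-appearance levels: you only demand $P|\gamma_i\sats\chi_{k_i}(a,p_1^P)$, and $\rSigma_1$ truth persists upward. So take the $\chi_{k_i}$ to be any $n{+}1$ trivially true formulas (variants of ``$p=p$'') already satisfied at level $\max(p_1^P)+1$, and set $\gamma_i=\max(p_1^P)+1+i$; this witnesses $\varphi_0(n,\ldots)$ at level $\max(p_1^P)+n+2$ for every $n$, so $\sup_n\alpha_n^P\le\max(p_1^P)+\om\ll\OR^P$. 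Neither conclusion of the lemma follows. Worse, even if you patch the formula to demand that each $\gamma_i$ be a genuine first-appearance level (which is indeed a $\rSigma_1$ condition once $\gamma_i$ is named), the set of first-appearance levels of formulas $\chi_k(a,p_1^P)$ is merely a countable cofinal subset of $\OR^P$ and can perfectly well have order type $>\om$ (infinitely many simple formulas become true low, while others become true arbitrarily high). In that case the first $\om$ of them in increasing order are bounded below $\OR^P$, and part~(\ref{item:Pg(X)_is_least_satisfying_all_n_varphi_0}) still fails. Your decoding of $a$ from $t_a$ is also needlessly roundabout (and invokes the external operator $\Pg$): $a$ is simply $\{n<\om:\text{``}n\in u\text{''}\in t_a\}$, which is $\Delta_0$ in $t_a$.

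The missing idea is that one must use $t_a$ itself---not just $a$---as the $\om$-indexed list of formulas, and one needs a device to make the formulas of $t_a$ (which are about $\Pg(a)$) speak about $P=\Pg(X)$. The paper does this by running the Q-local $L[\es,a]$-construction inside $P$; its output $N$ is a non-dropping iterate of $\Pg(a)$ which is simultaneously the P-construction of $P$ over $N|\delta_0^N$, so $\OR^N=\OR^P$ and $p_1^N=p_1^P$. The formula $\varphi_0(n,t,p)$ then just says ``the $n$th formula listed in $t$, with $p$ substituted for the parameter, holds in $N$''. This is $\rSigma_1$ over $P$ because $N$ is locally definable in $P$. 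Since $t_a$ enumerates the full $\rSigma_1$ theory of $\Pg(a)$ in order type $\om$, since that theory is not in $\Pg(a)$ (so its witnesses are cofinal in $\OR^{\Pg(a)}$), and since the iteration map $\Pg(a)\to N$ is cofinal, the witnesses are cofinal in $\OR^N=\OR^P$. That is what delivers both parts of the lemma; your argument has no substitute for this step.
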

\begin{proof}
Part \ref{item:Pg(X)_is_least_satisfying_all_n_varphi_0}:
Recall that $\Pg(a)\leq_T x_{\cone}$,
so $\Pg(a),t_a\in \Jj(X)$. Let $N$ be the output of the Q-local
 $L[\es,a]$-construction of $\Pg(X)$ (see \cite{mouse_scales}). Then $N$ is a correct non-dropping iterate of $\Pg(a)$, $\delta_0^{N}=\delta_0^{\Pg(X)}$, $N$ is the P-construction of $P$ over $N|\delta_0^N$, $N$ is $\delta_0^N$-sound, and $p_1^{N}=p_1^{P}$.
 So for all $\rSigma_1$ formulas $\varphi$, we have
 \[ \Pg(a)\sats\varphi(a,p_1^{\Pg(a)}) \iff N\sats\varphi(a,p_1^{P}).\]
 But these $\Sigma_1$ truths are verified cofinally in $\OR^N=\OR^{P}$,
 so by taking $\varphi_0(n,t,p_1^{P})$
 to assert that $N$ satisfies the $n$th statement in $t[p_1^{\Pg(a)}/p_1^{P}]$ (this denotes the theory resulting from $t$ by replacing $p_1^{\Pg(a)}$ with $p_1^{P}$), $\varphi_0$ is as desired.
 
 Part \ref{item:varphi_0_is_str_inc}: This is obtained via a simple modification of the formula just constructed.
 \end{proof}
 
 We fix a formula $\varphi_0$ witnessing the lemma.

 \begin{dfn}\label{dfn:T_n^N}
Let $X\in\mathscr{C}$.
An $X$-premouse $N$ is called \emph{relevant}
if there is $\delta<\OR^N$
such that $N\sats$``$\delta$ is the least Woodin $>\rank(X)$'' (and write $\delta^N=\delta$),
 $N=\Pg(N|\delta)$,
 and $\Pg(N|\beta)\pins N$ for each $\beta<\delta$.

Let $N$ be relevant. Then $\alpha_n^N$
denotes the least $\alpha$
such that $\max(p_1^N)<\alpha<\OR^N$ and $N|\alpha\sats\varphi_0(n,t,p_1^{N})$.
 Define
\[ \gamma^N_n=\sup(\delta^N\cap\Hull_1^{N|\alpha_n^N}(\{p_1^N\})),\]
and
\[ T_n^N=\Th_{1}^{N|\alpha_n^N}(\gamma_n^N\cup\{p_1^N\}).\]
Note $\gamma^N_n$ is a limit cardinal of $N$ and $\gamma^N_n<\delta^N$.\end{dfn}

Now also by \cite{gaps_as_derived_models}, we can fix a $\Sigma_1$ formula $\psi_{\Th}$
such that for all $X\in\mathscr{C}$, all $t\in\Lp_{\Gammagap}(X)$, and all $n<\om$, letting $P=\Pg(X)$, we have:
\begin{equation}\label{eqn:psi_Th} t=\Th_{\Sigma_1}^{P|\alpha_n^{P}}(\{p_1^{P}\}\cup X) 
 \iff \Ss\sats\psi_{\Th}(x_{\cone},n,t,X,p_1^{\Ss}).
\end{equation}

\begin{dfn}
Let $X\in\mathscr{C}$ and $N$ be an $X$-premouse.
We say  that $N$ is a \emph{$\Pgap$-ladder} iff $N$ is relevant
and for each $n<\om$,
there is a relevant $P\pins N$
such that $\gamma_n^N<\OR^P$ and $T_n^P=T_n^N$ (after identifying $p_1^P$ with $p_1^N$; it follows that $\gamma_n^P=\gamma_n^N$). Write $P_n^N=P$ for the least such $P$. We write
$M_{\ladder}^{\Pg}(X)$ for the least $X$-mouse which is a  $\Pgap$-ladder.\end{dfn}
 
  For each $X\in\mathscr{C}$, $M_{\ladder}^{\Pg}(X)$ exists (assuming $\AD^{L(\RR)}$): for example,  the least relevant $X$-mouse
 $N$ such that $\rho_1^N=\delta^N$
 is a $\Pgap$-ladder.
 
 \begin{lem}
 Let $M=M^{\Pg}_{\ld}(X)$.
 Then $\delta^M=\sup_{n<\om}\gamma_n^M$.\end{lem}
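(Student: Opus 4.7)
The plan is to prove $\delta^M = \sup_{n<\omega}\gamma_n^M$ by leveraging the minimality of $M = M^{\Pg}_{\ld}(X)$. The direction $\sup_n\gamma_n^M \leq \delta^M$ is immediate from Definition \ref{dfn:T_n^N}. For the reverse, I will suppose $\gamma := \sup_n\gamma_n^M < \delta^M$ and produce a proper segment $\bar{M} \pins M$ that is itself a $\Pg$-ladder, contradicting the choice of $M$.

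First I will establish the relevant cofinality data. Since $M$ is relevant, $M = \Pg(M|\delta^M)$, and part \ref{item:Pg(X)_is_least_satisfying_all_n_varphi_0} of the lemma preceding Definition \ref{dfn:T_n^N}, applied to $M|\delta^M$, gives $\sup_n \alpha_n^M = \OR^M$, so
\[ \sup_n \gamma_n^M \;=\; \sup\bigl(\delta^M \cap \Hull_1^M(\{p_1^M\})\bigr). \]
Each $\gamma_n^M$ is a limit cardinal of $M$, so $\gamma$ is a limit cardinal of $M$ as well, with $\gamma < \delta^M$. I will then form $\bar{M}$ as the transitive collapse of $\Hull_1^M(\gamma \cup \{p_1^M\})$, with uncollapse map $\pi : \bar{M} \to M$, so that $\pi$ is $\Sigma_1$-elementary, $\pi \rest \gamma = \id$, and $\bar{M}$ is $1$-sound with $\rho_1^{\bar{M}} = \gamma$. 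Standard fine-structural condensation, available because $\gamma$ is a cardinal of $M$, then yields $\bar{M} \pins M$.

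The next step is to verify that $\bar{M}$ is a $\Pg$-ladder. First, relevance transfers by $\Sigma_1$-elementarity: there is $\delta^{\bar{M}}$ in $\bar{M}$ with $\bar{M} = \Pg(\bar{M}|\delta^{\bar{M}})$ and $\delta^{\bar{M}}$ the least Woodin of $\bar M$ above $\rank(X)$; combined with $\rho_1^{\bar{M}} = \gamma$, this forces $\delta^{\bar{M}} = \gamma$. Second, the witness $P_n^M$ is $\Sigma_1$-definable over $M$ from $\{p_1^M, n\}$ as the least relevant $P \pins M$ satisfying $\gamma_n^M < \OR^P$ and $T_n^P = T_n^M$ (both $\gamma_n^M$ and $T_n^M$ being themselves $\Sigma_1$-definable from $\{p_1^M, n\}$), so $P_n^M \in \rg(\pi)$. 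The pullback $\bar{P}_n := \pi^{-1}(P_n^M)$ is then a relevant $\pins \bar{M}$, and since $\pi \rest \gamma = \id$ fixes $\gamma_n^M$, we obtain $\gamma_n^{\bar{M}} = \gamma_n^M < \OR^{\bar{P}_n}$ and $T_n^{\bar{P}_n} = T_n^{\bar{M}}$. Thus $\bar{M} \pins M$ is itself a $\Pg$-ladder, contradicting minimality of $M$.

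The hard part is the fine-structural one: verifying that relevance and the minimization defining $P_n^M$ are expressible with low enough complexity (at the $\Sigma_1$-level) over $M$ for the elementarity of $\pi$ to pull them down faithfully, and that condensation places $\bar{M}$ as a proper initial segment of $M$ in the intended sense. If either step in fact requires stepping up to a higher-level hull $\Hull_k^M(\gamma \cup \{p_k^M\})$ and the corresponding $(k,\gamma)$-condensation, the argument adapts---both ingredients are available because $\gamma$ is a cardinal of $M$ and $\bar M$ remains sound---but the core strategy, namely contradicting minimality via a ladder descended through a hull at $\gamma$, is unchanged.
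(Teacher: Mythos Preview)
Your overall strategy matches the paper's exactly: assume $\gamma=\sup_n\gamma_n^M<\delta^M$, take the $\Sigma_1$-hull at $\gamma$ with $p_1^M$, collapse to $\bar M$, get $\bar M\pins M$ by condensation, and contradict minimality by showing $\bar M$ is itself a $\Pg$-ladder. (The paper includes $X$ in the hull; since $M$ is an $X$-premouse you should do the same.)

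The gap is in your justification that $\bar M$ is a $\Pg$-ladder. You argue that relevance ``transfers by $\Sigma_1$-elementarity'' and that $P_n^M$ is $\Sigma_1$-definable over $M$ from $\{p_1^M,n\}$. But ``relevant'' means $P=\Pg(P|\delta^P)$ and $\Pg(P|\beta)\pins P$ for all $\beta<\delta^P$, and $\Pg$ is an \emph{external} operator (it encodes iterability and the correspondence with $\Ss$). These are not first-order properties of $M$, so they are not moved by $\Sigma_1$-elementarity of $\pi$ alone. Your proposed fallback---stepping up to a $\Hull_k$---does not address this, since the issue is not complexity within $M$ but externality.

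What actually makes the argument go through is the specific machinery developed in \cite{gaps_as_derived_models} and recalled just before this lemma: the formulas $\psi_{\Sat}$, $\psi_{\Th}$, and especially $\varphi_0$, which together give an internal characterization of $\Pg(Y)$ in terms of the Q-local $L[\es,a]$-construction and the theory $t_a$. One uses these to verify directly that $\bar M=\Pg(\bar M|\gamma)$ (using that $\bar M\pins M$ and $M$ is $\Pg$-closed), that $\gamma_n^{\bar M}=\gamma_n^M$ and $T_n^{\bar M}=T_n^M$, and then to find the witnessing rungs inside $\bar M$. The paper simply cites those calculations; your write-up should invoke them rather than appeal to $\Sigma_1$-elementarity for a property $\pi$ cannot see.
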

 \begin{proof}Suppose not and let $\bar{\delta}=\sup_{n<\om}\gamma_n^M$ and $\bar{M}=\Hull_1^M(\bar{\delta}\cup\{p_1^{M}\}\cup X)$,
 and $\pi:\bar{M}\to M$ the uncollapse.
 Then $\crit(\pi)=\bar{\delta}$
 and $\pi(\bar{\delta})=\delta$,
 and  by calculations in \cite{gaps_as_derived_models}, $\bar{M}$ is also a $\Pgap$-ladder, and since $\bar{\delta}<\delta^M$
 and $M$ is $\Pgap$-closed,
 we have $\bar{M}\pins M$,
 contradicting the minimality of $M$.
 \end{proof}

 \begin{dfn}
 Let $X\in\mathscr{C}$ and  $N$ be an  $X$-premouse.
 
 We say $N$ is \emph{$\Pgap$-correct} iff for each $\xi<\OR^{N}$ with $\rank(X)\leq\xi$,
 either $\Pgap(N|\xi)\ins N$ or $N\pins\Lp_{\Gammagap}(N|\xi)$ and $N$ is $\xi$-sound and projects $\leq\xi$. And $N$ is called \emph{$\Pgap$-closed} iff for all $\xi<\OR^N$ with $\rank(X)\leq\xi$, we have $\Pgap(N|\xi)\pins N$.
 
 Suppose $N$ is $\Pgap$-correct
 and $n$-sound. Let $\Tt$ be an $n$-maximal tree on $N$. We say that $\Tt$ is \emph{$\Pgap$-transcendent} iff for each $\alpha+1<\lh(\Tt)$,
 if $M^\Tt_\beta$ is $\Pgap$-correct for all $\beta\leq\alpha$ then
  $M^\Tt_\alpha||\lh(E^\Tt_\alpha)$ is $\Pgap$-closed.

  Let $\Tt$ be an $n$-maximal tree on an $n$-sound $X$-premouse $Q$.  We say that $\Tt$ is \emph{$\Gammagap$-guided}
  iff for every limit $\eta<\lh(\Tt)$,
  there is $Q\ins M^\Tt_\eta$
  which is a Q-structure for $M(\Tt\rest\eta)$,
  and $Q\pins\Lp_{\Gammagap}(M(\Tt\rest\eta))$.
  If $\Tt$ has limit length, we say that $\Tt$ is \emph{$\Gammagap$-short}
  iff $\Tt$ is $\Gammagap$-guided and there is $Q\pins\Lp_{\Gammagap}(M(\Tt))$ which is a Q-structure for $M(\Tt)$, and we say that $\Tt$ is \emph{$\Gammagap$-maximal} otherwise (that is, if $\Lp_{\Gammagap}(M(\Tt))$ is a premouse and satisfies ``$\delta(\Tt)$ is Woodin''.
\end{dfn}

\begin{dfn}
Let $X\in\mathscr{C}$ and $N$ be an $n$-sound $X$-premouse.

 Say that an $N$ is \emph{$\Pi_1^{\Ss}(\{p_1^{\Ss},x_{\cone}\})$-$n$-iterable} (abbreviated \emph{$\Pi_1$-$n$-iterable})
 iff $N$ is $\Pgap$-correct and 
 for every putative $n$-maximal iteration tree $\Tt$ on $N$ which is $\Gammagap$-guided and $\Pgap$-transcendent,  we have:
 \begin{enumerate}
  \item $\Tt$ has wellfounded $\Pgap$-correct models,
  \item if $\Tt$ has limit length and is $\Gammagap$-short and $Q\pins\Lp_{\Gammagap}(M(\Tt))$ is the Q-structure for $M(\Tt)$,
  then there is a $\Tt$-cofinal branch $b$ with $Q\ins M^\Tt_b$, and
  \item\label{item:Pi_1-it_maximal_clause} if $\Tt$
 has limit length and is $\Gammagap$-maximal
 and $M(\Tt)\sats$``there is no Woodin cardinal $>\rank(X)$''
 then for every $n<\om$
 there is a $\Tt$-cofinal branch $b$
 and some $Q\ins M^\Tt_b$ such that:
 \begin{enumerate}[label=--]\item 
 $\Lp_{\Gammagap}(M(\Tt))=Q|\delta(\Tt)^{+Q}$,
 \item $Q$ has $\om$ Woodins $>\delta(\Tt)$,
 \item  $\rho_1^Q\leq\delta(\Tt)<\lambda^Q<\OR^Q$ and $p_1^Q\cut(\lambda^Q+1)=\emptyset$,
 \item defining $\alpha_k^Q$, $\gamma_k^Q$ and
  $T_k^Q$ as in \ref{dfn:T_n^N} (we do not assume that $Q=\Pgap(M(\Tt))$, but can still define these objects), we have $\sup_{k<\om}\alpha_k^Q=\OR^Q$ and $\sup_{k<\om}\gamma_k^Q=\delta(\Tt)$, and moreover,
  $T_n^Q=T_n^{\Pgap(M(\Tt))}$ (for the particular $n$ under consideration).\qedhere
  \end{enumerate}
 \end{enumerate}
\end{dfn}
 
\begin{lem}
 Let $X\in\mathscr{C}$ and $n\leq\om$.
The set of  $\Pi_1^\Ss(\{p_1^\Ss,x_{\cone}\})$-$n$-iterable $X$-premice is $\Pi_1^{\Ss}(\{p_1^{\Ss},x_{\cone}\})$,
uniformly in $X,n$.\end{lem}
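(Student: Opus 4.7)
The plan is to show that the \emph{failure} of $\Pi_1^\Ss(\{p_1^\Ss,x_{\cone}\})$-$n$-iterability for an $X$-premouse $N\in\HC$ is uniformly $\Sigma_1^\Ss(\{p_1^\Ss,x_{\cone}\})$ in the parameters $(X,N,n)$; taking the negation then yields the desired uniform $\Pi_1^\Ss(\{p_1^\Ss,x_{\cone}\})$ bound. The two tools I rely on are the formulas $\psi_{\Sat}$ and $\psi_{\Th}$ from lines (\ref{eqn:psi_Sat}) and (\ref{eqn:psi_Th}): together they let one $\Sigma_1^\Ss$-evaluate $\rSigma_1$ assertions inside $\Pg(Y)$, and $\Sigma_1^\Ss$-identify the theories $T_k^{\Pg(Y)}$, uniformly in $Y\in\mathscr{C}$. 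Consequently the operators $Y\mapsto\Lp_{\Gammagap}(Y)$ and $Y\mapsto\Pg(Y)$, the predicate ``$Z$ is $\Pgap$-correct'', and (for $\Tt\in\HC$ of limit length) the predicates ``$\Tt$ is $\Gammagap$-guided'' and ``$\Tt$ is $\Gammagap$-short with Q-structure $Q$'' all admit $\Sigma_1^\Ss$-definitions in the named parameters. Similarly the hypothesis ``$\Tt$ is $\Pgap$-transcendent'' reduces to such queries applied to each successor-stage model of $\Tt$.

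Failure of iterability decomposes as the disjunction of: (a) $N$ itself is not $\Pgap$-correct; (b) there is $\Tt\in\HC$ that is a putative $n$-maximal, $\Gammagap$-guided, $\Pgap$-transcendent tree on $N$ and an $\alpha<\lh(\Tt)$ for which $M^\Tt_\alpha$ is either illfounded or not $\Pgap$-correct; (c) there is such a $\Tt$ which is $\Gammagap$-short with Q-structure $Q\pins\Lp_{\Gammagap}(M(\Tt))$ and no $\Tt$-cofinal branch $b$ has $Q\ins M^\Tt_b$; or (d) there is such a $\Tt$ which is $\Gammagap$-maximal with $M(\Tt)\sats$``there is no Woodin cardinal $>\rank(X)$'' together with some $k<\om$ for which clause~\ref{item:Pi_1-it_maximal_clause} of the definition fails. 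Cases (a) and (b) are manifestly $\Sigma_1^\Ss$ by the preceding paragraph.

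For (c), I use standard Q-structure branch identification: given $(\Tt,Q)$ there is a $\Delta_0^\HC$-definable candidate branch $b_{\Tt,Q}$, namely the unique cofinal branch through $\Tt$ whose direct limit accommodates $Q$ above $\delta(\Tt)$, and the condition ``no cofinal $b$ has $Q\ins M^\Tt_b$'' is equivalent to ``$b_{\Tt,Q}$ fails to be cofinal or fails to satisfy $Q\ins M^\Tt_{b_{\Tt,Q}}$'', a bounded-complexity check in $\HC$. For (d), a parallel uniqueness is available. The conditions listed in clause~\ref{item:Pi_1-it_maximal_clause} on $Q$ — soundness with $\rho_1^Q\leq\delta(\Tt)$, the prescribed Woodin and projectum pattern, and the equality $T_k^Q=T_k^{\Pg(M(\Tt))}$ — pin $Q$ down as a $\Sigma_1^\Ss$-definable initial segment of $\Pg(M(\Tt))$ (using $\psi_{\Th}$ to identify $T_k^{\Pg(M(\Tt))}$ and $\psi_{\Sat}$ to locate the corresponding segment of $\Pg(M(\Tt))$), and the branch $b$ is then $\Delta_0^\HC$-definable from $Q$ as before. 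Failure for the given $k$ therefore reduces to verifying that this uniquely specified candidate fails, a $\Sigma_1^\Ss$ check; existential quantification over $\Tt$ and $k$ preserves $\Sigma_1^\Ss$.

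Uniformity in $X$ and $n$ is automatic, since the resulting formula inserts $(X,n)$ syntactically into a fixed scheme using $\psi_{\Sat}$, $\psi_{\Th}$, $p_1^\Ss$, and $x_{\cone}$. The main obstacle is case (d): replacing the existential over $(b,Q)$ by a $\Sigma_1^\Ss$-definable candidate, which rests on the uniqueness properties of $\Pg(M(\Tt))$ established in \cite{gaps_as_derived_models} together with the rigidity enforced by clause~\ref{item:Pi_1-it_maximal_clause}.
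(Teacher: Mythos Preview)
Your overall strategy (show that failure of iterability is $\Sigma_1^{\Ss}$) is reasonable, but you are working much harder than necessary on the branch quantifiers, and this leads to a real gap in case (d).

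The point you miss, which the paper uses directly, is that the quantifier ``there exists a $\Tt$-cofinal branch $b$'' is \emph{bounded}: any cofinal branch through $\Tt\in\HC$ is a subset of $\lh(\Tt)<\om_1$, hence is coded by a real, and $\RR\in\Ss$. So ``$\exists b$ cofinal with $Q\ins M^\Tt_b$'' and ``$\forall b,Q\ [\ldots]$'' are $\Delta_0$ in the parameter $\RR$ (together with $\Tt$), adding no complexity at all. The paper simply remarks that this quantifier ``is not a problem'' and then concentrates on the two genuinely delicate pieces: expressing $\Pg$-correctness of the models (done via $\psi_{\Sat}$, as a $\Pi_1$ condition) and identifying $T_n^{\Pg(M(\Tt))}$ (done via $\psi_{\Th}$). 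Your cases (a)--(c) are therefore handled automatically once you observe the boundedness of the branch quantifier; no ``unique candidate $b_{\Tt,Q}$'' is needed.

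Your case (d) contains a genuine error. You claim that the conditions in clause~\ref{item:Pi_1-it_maximal_clause} ``pin $Q$ down as a $\Sigma_1^\Ss$-definable initial segment of $\Pg(M(\Tt))$''. This is false: the requirement is only that $T_k^Q=T_k^{\Pg(M(\Tt))}$ (after identifying the standard parameters), together with the listed fine-structural conditions; nothing forces $Q$ to be a segment of $\Pg(M(\Tt))$. Indeed, in the proof of the very next lemma one sees that for different $k$ one obtains different branches $b_k$ and different $Q_k=M^{\Uu}_{b_k}$, none of which is $\Pg(M(\Uu))$; only the $\liminf$ of the $b_k$ yields $\Pg(M(\Uu))$. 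So there is no single $\Sigma_1^\Ss$-definable candidate $(b,Q)$ whose failure witnesses failure of clause~\ref{item:Pi_1-it_maximal_clause} at level $k$. The correct fix is simply to note, as above, that the existential over $(b,Q)$ is bounded, and then use $\psi_{\Th}$ to express the equation $T_k^Q=T_k^{\Pg(M(\Tt))}$.
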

\begin{proof}

First note that $\HC$ and wellfoundedness for elements of $\HC$ are  $\Delta_1^{\Ss}(\{p_1^{\Ss},x_{\cone}\})$,
as is $\{\alphagap\}$, and using these we can express everything other than $\Pg$-correctness and the equation $T_n^Q=T_n^{\Pg(M(\Tt))}$ in part \ref{item:Pi_1-it_maximal_clause}. (For $\Pg$-transcendence, given $\Pg$-correctness, we can just say that for each $\alpha+1<\lh(\Tt)$ and each $\xi<\lh(E^\Tt_\alpha)$,
we have $\Lp_{\Gammagap}(M^\Tt_\alpha|\xi)\pins M^\Tt_\alpha|\lh(E^\Tt_\alpha)$.  And note that the quantifier ``there is a $\Tt$-cofinal branch''
is not a problem.)

Now $N$ is $\Pg$-correct iff for every $\xi<\OR^N$,
either:
\begin{enumerate}[label=(\roman*)]
 \item\label{item:N_is_Gamma_short_at_top} $N\pins\Lp_{\Gammagap}(N|\xi)$ and $N$ is $\xi$-sound and projects $\leq\xi$, or
 \item\label{item:Pg(N|xi)_seg_of_N} $\Pgap(N|\xi)\ins N$.
\end{enumerate}
Clause \ref{item:N_is_Gamma_short_at_top} is easily expressible, as already discussed.
To assert clause \ref{item:Pg(N|xi)_seg_of_N},
we use the definability of $\Lp_{\Gammagap}$
and standard calculations like those used in connection the proof that solidity of the standard parameter ensures its preservation under iteration maps. That is, we assert that, letting $\eta\leq\OR^N$ be such that $\Lp_{\Gammagap}(N|\xi)=N||\eta$, then in fact $\Lp_{\Gammagap}(N|\xi)\pins N$, and there is (a uniquely specified) $Q\ins N$ such that $\eta=\xi^{+Q}<\OR^Q$ and $\rho_1^Q\leq\xi$ and $\xi$ is a strong cutpoint of $Q$, and for each $\rSigma_1$ formula $\varphi$ and each $\vec{x}\in(N|\xi)^{<\om}$, 
\[ \Ss\sats\psi(x_{\cone},\varphi,\vec{x},N|\xi,p_1^\Ss) \implies Q'\sats\varphi(\vec{x},p_1^{Q'}) \]
where $Q'$ is the reorganization of $Q$ as a $N|\xi$-premouse (and recall that the language for $N|\xi$-premice has a constant symbol interpreted as $N|\xi$, so $\varphi$ can refer to $N|\xi$).
The key is now that these properties ensure that $Q'=\Pg(N|\xi)$, since letting $P=\Pg(N|\xi)$, certainly
\[ \Th_{\rSigma_1}^{P}(\{p_1^{P}\}\cup(N|\xi)^{<\om})\ins^*\Th_{\rSigma_1}^{Q}(\{p_1^Q\}\cup(N|\xi)^{<\om}),\]
where $\ins^*$ means both $\sub$ and that the theory on the left is an initial segment of that on the right with respect to the usual prewellorder on $\rSigma_1$ truth. But then if the theories were not equal, we would get $P\pins Q$,
but $\eta=\xi^{+P}$ and $\rho_1^{P}\leq\xi<\eta$, contradicting that $\eta=\xi^{+Q}$.

We can similarly assert that later models  $M^\Tt_\alpha$ of $\Tt$ are $\Pg$-correct (and so our earlier expression of $\Pg$-transcendence will be valid).

Finally, to assert the equation ``$T_n^Q=T_n^{\Pg(M(\Tt))}$''
in the context of condition \ref{item:Pi_1-it_maximal_clause}, we just have to be able to identify $T_n^{\Pg(M(\Tt))}$ in a simple enough manner. But this is achieved via the formula $\Psi_{\Th}$ and line (\ref{eqn:psi_Th}).
 \end{proof}
 
 $\Pi_1$-iterability is enough to determine the proper segments of $M_{\ld}^{\Pg}(X)$, but not enough  for $M_{\ld}^{\Pg}(X)$ itself.
 
 \begin{lem}
 Let $X\in\mathscr{C}$. Let  $N$ be a $\Pi_1^{\Ss}(\{p_1^{\Ss},x_{\cone}\})$-iterable $X$-premouse
 which is sound and projects to $X$. Let $M=M^{\Pg}_{\ladder}(X)$. Then either
 $N\pins M|\om_1^M$ or $M|\om_1^M\pins N$.
 \end{lem}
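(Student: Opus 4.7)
The plan is to compare $N$ with $M|\om_1^M$ by a coiteration and analyze the result using the $\Pi_1$-iterability of $N$ together with the minimality of $M$.

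First, I would set up the coiteration, forming iteration trees $\Tt$ on $N$ and $\Uu$ on $M|\om_1^M$ by least disagreement.  On the $\Uu$-side, since every proper segment $P\pins M$ which projects to its $X$-base lies in $\Lp_{\Gammagap}$ (by virtue of $M$ being a $\Pgap$-ladder with $\Lp_\Gammagap(M|\xi)\pins M$ for all relevant $\xi$), the natural strategy for $M|\om_1^M$ is guided by Q-structures coming from $\Lp_{\Gammagap}$; in particular $\Uu$ is automatically $\Gammagap$-guided and $\Pgap$-transcendent, with all iterates $\Pgap$-correct.  On the $\Tt$-side, I invoke the $\Pi_1$-iterability of $N$: at each limit stage where $\Tt\rest\eta$ is $\Gammagap$-short the unique branch realizing the Q-structure is specified by condition (2) of $\Pi_1$-iterability.

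The central claim to establish is that no initial segment of $\Tt$ becomes $\Gammagap$-maximal during the coiteration.  Suppose for contradiction $\Tt\rest\eta$ were $\Gammagap$-maximal.  Since $M(\Tt\rest\eta)=M(\Uu\rest\eta)$, the corresponding cut of $\Uu$ is also $\Gammagap$-maximal, so $\Lp_{\Gammagap}(M(\Uu\rest\eta))\sats$``$\delta(\Uu\rest\eta)$ is Woodin''; but the $M$-strategy extends $\Uu\rest\eta$ to a branch $b$ with $\Lp_{\Gammagap}(M(\Uu\rest\eta))\pins M^\Uu_b\pins M|\om_1^M$.  Now apply clause \ref{item:Pi_1-it_maximal_clause} of $\Pi_1$-iterability to $N$ for each $n<\om$ to obtain branches producing $Q_n\ins M^\Tt_{b_n}$ with $T_n^{Q_n}=T_n^{\Pgap(M(\Tt\rest\eta))}$, and glue these via the hull/condensation argument used in the definition of $M^{\Pg}_{\ld}$ to recover a sound $\Pgap$-ladder $M^*$ over $M(\Uu\rest\eta)$ with $M^*\pins M|\om_1^M\pins M$.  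This contradicts the minimality of $M=M^{\Pg}_{\ld}(X)$.

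Hence both trees remain $\Gammagap$-short throughout, and standard arguments give termination of the coiteration at some countable ordinal.  Since both $N$ and $M|\om_1^M$ are sound and project to $X$ (respectively, every relevant proper segment of $M|\om_1^M$ does), a routine Dodd--Jensen / solidity argument forces both main branches to be trivial (no drops, no nontrivial iteration map) on at least one side, and yields that one of $N$, $M|\om_1^M$ is an initial segment of the other, as desired.

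The main obstacle is the middle step: verifying that a hypothetical $\Gammagap$-maximal stage in $\Tt$ really manufactures a proper-segment $\Pgap$-ladder inside $M|\om_1^M$.  This is exactly the reason clause \ref{item:Pi_1-it_maximal_clause} is formulated the way it is — the clause delivers, for each $n$, a candidate $Q$ whose $T_n^Q$-theory matches that of the would-be $\Pgap(M(\Tt\rest\eta))$, and the delicate task is to show that these $n$-by-$n$ pieces cohere into a single sound ladder below $\om_1^M$ using $\psi_{\Th}$ and the soundness/projection properties of $M$.
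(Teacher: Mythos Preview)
Your overall setup—comparison plus clause \ref{item:Pi_1-it_maximal_clause}—is the right entry point, but the core of the argument is quite different from what you sketch, and your ``gluing'' step does not work as stated.

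First a framing issue: the comparison \emph{does} reach a $\Gammagap$-maximal limit stage $\lambda$. It cannot terminate successfully (by the contradictory hypothesis), and as long as both trees remain $\Gammagap$-short and $\Pgap$-transcendent, the comparison simply continues. So the task is to analyze what happens at $\lambda$ and derive a contradiction \emph{there}, not to show maximality never occurs. One also has to check first that the tree $\Tt$ on $N$ is cofinally non-padding at $\lambda$; this uses smallness of $M$ (if $\Tt$ were eventually padding, both sides would line up at $\Pgap(M(\Tt))$ and the comparison would already have succeeded).

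Second, and more seriously: the $Q_n$'s produced by clause \ref{item:Pi_1-it_maximal_clause} live inside the models $M^\Tt_{b_n}$ for possibly \emph{different} cofinal branches $b_n$ of $\Tt$. They are not segments of $\Pgap(M(\Tt\rest\lambda))$, nor of anything on the $M$-side, so there is no way to ``glue'' them into a ladder that sits below $\om_1^M$. What the paper actually does is show that the branches $b_n$ themselves converge. One first proves (using smallness of $M$ and condensation, to rule out $\Pgap(M(\Tt))$ being a ladder) that $\sup_n\rho_1(M^\Tt_{b_n})<\delta(\Tt)$ and $\sup_n\kappa_n<\delta(\Tt)$, where $\kappa_n$ is the first critical point along $b_n$ at or above $\rho_1(M^\Tt_{b_n})$. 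Then a Zipper Lemma argument—using that $T_{m_0}^{M^\Tt_{b_{m_0}}}=T_{m_0}^{\Pgap(M(\Tt))}=T_{m_0}^{M^\Tt_{b_{m_1}}}$ forces the ranges of the relevant branch embeddings to be cofinal in the same $\gamma_{m_0}^{\Pgap(M(\Tt))}$—shows that $c=\liminf_n b_n$ is a single $\Tt$-cofinal branch with $c\cap\dropset^\Tt$ finite. Finally, the theory-agreement along $c$ yields $M^\Tt_c=\Pgap(M(\Tt))$. Since $\Pgap(M(\Tt))\ins M^\Uu_b$ on the $M$-side as well (indeed equals it, again by smallness), the comparison has succeeded after all—that is the contradiction.

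So the missing ingredient is not a condensation-into-$M$ step, but the Zipper/liminf argument that collapses the family $\{b_n\}_{n<\om}$ into a single branch realizing $\Pgap(M(\Tt))$.
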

 \begin{proof}
 Suppose not. 
  We attempt to compare $M|\om_1^M$ with $N$,
  with trees $\Tt,\Uu$ respectively (padded in the usual way for comparison).
  Note that by the contradictory hypothesis,
 the comparison is non-trivial,
 but cannot terminate successfully.
 Since both trees produce $\Pgap$-correct models as far as the tree is $\Gammagap$-short
 and $\Pgap$-transcendent,
 both trees are in fact $\Pgap$-transcendent as far as they are $\Gammagap$-short. So the comparison must reach a limit stage $\lambda$ (and let us say that $(\Tt,\Uu)$ has length $\lambda$
 such that if $\Tt$ is cofinally (in $\lambda$) non-padding, then $\Tt$ is $\Gammagap$-maximal,
 and likewise for $\Uu$. Let $b=\Sigma_M(\Tt)$.

 \begin{clm}$E^\Uu_\alpha\neq\emptyset$
  for cofinally many $\alpha<\lambda$.
 \end{clm}
\begin{proof}
 Suppose not.
 So we can fix $\alpha<\lambda$ such that $M^\Uu_\alpha=M^\Uu_{\alpha'}$ for all $\alpha'\in[\alpha,\lambda)$.
 Then note that $\Pgap(M(\Tt))\ins M^\Tt_b$ and $\Pgap(M(\Tt))\ins M^\Uu_\alpha$. We can't have $\Pgap(M(\Tt))\pins M^\Tt_b$,
 by smallness (since $\delta(\Tt)$ is a cardinal in $M^\Tt_b$, by taking appropriate hulls and using condensation, we get a $\Pg$-ladder $N\pins M^\Tt_b$).
 So $M^\Tt_b=\Pgap(M(\Tt))$.
 Similarly, $M^\Uu_\alpha=\Pgap(M(\Tt))$, but then the comparison succeeded, contradiction.
 \end{proof}
 
 So we can apply clause \ref{item:Pi_1-it_maximal_clause} of $\Pi_1$-iterability to $\Uu$.
 Let $b_n$ be a branch and $Q_n\ins M^{\Uu}_{b_n}$ witness the requirement for $n<\om$.
 Note that by smallness, $Q_n=M^{\Uu}_{b_n}$
 and $\rho_1(M^\Uu_{b_n})<\delta(\Uu)$. Let $\xi_n$
 be least such that $\xi_n+1\in b_n$ and $\rho_1(M^\Uu_{b_n})\leq\crit(E^\Uu_{\xi_n})$.
 Let $\kappa_n=\crit(E^\Uu_{\xi_n})$.
 
 \begin{clm} We have:
 \begin{enumerate}\item \label{item:rho<delta(Uu)}
  $\rho=\sup_{n<\om}\rho_1(M^\Uu_{b_n})<\delta(\Uu)$.
  \item\label{item:no_drop_after_xi_n+1} For each $n<\om$, we have $(\xi_n+1,b_n)\cap\dropset^\Uu=\emptyset$ and $\deg^\Uu_{\xi_n+1}=\deg^\Uu_{b_n}=0$, so $i^{*\Uu}_{\xi_n+1,b_n}:M^{*\Uu}_{\xi_n+1}\to M^\Uu_{b_n}$ exists and is a $0$-embedding.
  \item\label{item:sup_kappa_n<delta(Uu)}  $\sup_{n<\om}\kappa_n<\delta(\Uu)$.
 \end{enumerate}
 \end{clm}
\begin{proof}
Part \ref{item:rho<delta(Uu)}: Otherwise, condensation easily gives that
 $\Pgap(M(\Uu))$ is a $\Pgap$-ladder, but since $\Pgap(M(\Uu))\ins M^\Tt_b$,  this contradicts smallness.
 
Part \ref{item:no_drop_after_xi_n+1}:
This is a routine consequence of the fact that $\rho_1(M^\Uu_{b_n})\leq\kappa_n$.

 Part \ref{item:sup_kappa_n<delta(Uu)}: Suppose not. With $\rho$ as in part \ref{item:rho<delta(Uu)}, let $\beta$
 be least such that $\rho<\nu(E^\Uu_\beta)$
 and let $n$ be such that $\nu(E^\Uu_\beta)\leq\kappa_n$ ($n$ exists as $\sup_{n<\om}\kappa_n=\delta(\Uu)$).
 So $\beta<\pred^\Uu(\xi_n+1)$.
 Let $\eta$ be least such that $\eta\geq\beta$ and $\eta+1\in b_n$. So $\pred^\Uu(\eta+1)\leq\beta$,
 so $\eta<\xi_n$, so \[\crit(E^\Uu_\eta)<\rho_1(M^\Uu_{b_n})\leq\rho<\nu(E^\Uu_\beta)\leq\nu(E^\Uu_\eta).\]
 This contradicts the preservation of fine structure along $b_n$ (irrespective of drops along $b_n$). 
\end{proof}

 \begin{clm} $c=\liminf_{n<\om}b_n$ is a  $\Uu$-cofinal branch, $c\cap\dropset^\Uu$ is finite, and $M^\Uu_c=\Pgap(M(\Uu))$.\end{clm}
 \begin{proof}
Let $m<\om$ with $\gamma_m^{\Pgap(M(\Uu))}>\sup_{n<\om}\kappa_n$,
 and let $ m\leq m_0<m_1<\om$.
 Since
 $T_{m_0}^{M^\Uu_{b_{m_0}}}=T_{m_0}^{\Pg(M(\Uu))}=T_{m_0}^{M^\Uu_{b_{m_1}}}$, we have that
 $\rg(i^{*\Uu}_{\xi_{m_i+1},b_{m_i}})\cap\gamma_{m_0}^{\Pg(M(\Uu))}$ is cofinal in $\gamma_{m_0}^{\Pg(M(\Uu))}$,
 for $i\in\{0,1\}$.  So letting $\eta_i+1\in b_{m_i}$ be least with $\gamma_{m_0}^{\Pg(M(\Uu))}\leq\crit(E^\Uu_{\eta_i})$,
 for $i\in\{0,1\}$, 
 then $\pred^\Uu(\eta_0+1)=\pred^\Uu(\eta_1+1)\in b_{m_0}\cap b_{m_1}$, by the Zipper Lemma. This shows that  $c=\liminf_{n<\om}b_n$ is a $\Uu$-cofinal branch. The agreement between the branches $b_n$ (for sufficiently large $n$) also gives that $c\cap\dropset^\Uu$ is bounded in $\lambda$ and hence finite, since $\rho_1(M^\Uu_{b_{m_0}})\leq\rho$ for each $m_0<\om$.
 
 Now note that for sufficiently large $\xi\in c$,
 letting $\delta_\xi$
 be the least Woodin of $M^\Uu_\xi$ with $\delta_\xi>\rank(X)$, we have:
 \begin{enumerate}[label=--]\item $(\xi,c)^\Uu\cap\dropset^\Uu=\emptyset$ and $\deg^\Uu_\xi=0$,
 \item $M^\Uu_\xi$ is $\delta_\xi$-sound with $\rho_1(M^\Uu_\xi)<\delta_\xi$,
 \item $\sup_{n<\om}\alpha_n^{M^\Uu_\xi}=\OR(M^\Uu_\xi)$ and
  $\sup_{n<\om}\gamma_n^{M^\Uu_\xi}=\delta_\xi$,
  \item $i^\Uu_{\xi c}``\OR(M^\Uu_\xi)$ is cofinal in $\OR(M^\Uu_c)$ and $i^\Uu_{\xi c}(\alpha_n^{M^\Uu_\xi})=\alpha_n^{M^\Uu_c}$ for all $n<\om$,
  \item $i^\Uu_{\xi c}$ is continuous at $\delta_\xi$ and $i^\Uu_{\xi c}(\gamma_n^{M^\Uu_\xi})=\gamma_n^{\Pg(M(\Uu))}$ for all $n<\om$,
  and so $i^\Uu_{\xi c}(\delta_\xi)=\delta(\Uu)\in\wfp(M^\Uu_c)$.
 \end{enumerate}
 Note that for each $n<\om$, 
 for sufficiently large $\xi\in c$, we  have
 $\gamma_n^{M^\Uu_\xi}<\crit(i^\Uu_{\xi c})$
 and
  $T_n^{M^\Uu_c}=T_n^{M^\Uu_\xi}=T_n^{\Pg(M(\Uu))}$.
 Also $M^\Uu_c$ is $\delta(\Uu)$-sound,
 and it follows that $M^\Uu_c=\Pgap(M(\Uu))$,
 completing the proof of the claim.
 \end{proof}
 
 Since $\Pgap(M(\Uu))\ins M^\Tt_{\Sigma_M(\Tt)}$
 (or $M^\Tt_\alpha$ for sufficiently large $\alpha<\lambda$, if $\Tt$ is eventually only padding), we have found a successful comparison,
 a contradiction, completing the proof of the lemma.
 \end{proof}

Now let $X\in\mathscr{C}$ and $M=M^{\Pg}_{\ladder}(X)$. Let $\bar{\beta}$ and $\pi:\Ss_{\bar{\beta}}(\RR^M)\to\Ss$ be the $\Sigma_1$-elementary embedding described in \cite{gaps_as_derived_models}.
So $\RR^M=\Ss_{\bar{\beta}}(\RR^M)\cap\RR$.
We now want to show that
 $\Sigma_2^{\Ss}(\{x_{\cone},X\})\rest\RR^M$ is $\Pi_2^{\Ss_{\bar{\beta}}(\RR^M)}(\{x_{\cone},X\})$; this is analogous to what we showed for projective-like gaps of countable cofinality.

 Let $\varphi$ be $\exists^\RR\Pi_1$,
 of form $\varphi(u)\iff \exists^\RR w\ \psi(w,u)$, where $\psi$
 is $\Pi_1$.
 Let $x\in\RR^M$.
 For $n<\om$,
 let $b^M_n\in\BB_{\delta^M}^M$
 be the Boolean value of the statement\footnote{***This needs explaining what the generic $L(\RR)$ means. It's not just the naive derived model, but the model encoded as in \cite{gaps_as_derived_models}.} ``$M|\alpha^M_n$
 verifies that the generic $L(\RR)$ models $\neg\psi(\dot{w},x)$,
 where $\dot{w}$ is the generic real''. Let $S$ be the natural tree
 of attempts to build $(H,g)$
 such that $H\preccurlyeq_1 M$
 and $\delta^M,p_1^M,x_{\cone}\in H$
 and $g\sub H$,
 and letting $\bar{M}$ be the transitive collapse of $H$ and $\tau:\bar{M}\to H$ the isomorphism
 and $\bar{g}=\tau^{-1}``g$, then $\bar{g}$ is a $(\bar{M},\BB_{\delta^{\bar{M}}}^{\bar{M}})$-generic
 filter such that for no $n<\om$ is $b_n^{\bar{M}}=\tau^{-1}(b_n^M)\in \bar{g}$.
 More precisely, $S$ is the set of finite sequences $((\vec{x}_0,q_0),(\vec{x}_1,q_1),\ldots,(\vec{x}_{n-1},q_{n-1}))$ such that:
 \begin{enumerate}
 \item\label{item:x-vec_i_in_relevant_Hull} $\vec{x}_i\in \Hull_1^{M|\alpha_i^M}(\gamma_i^M\cup X^M\cup\{p_1^M\})^{<\om}$ for $i<n$,
 \item  if $n>0$ then
 $\vec{x}_0$ includes $p_1^M$, $x_0$\footnote{Of course, we had assumed that $x_0=\emptyset$, but in general this should be included.} and $\delta^M$, 
 \item 
  $q_i\in \vec{x}_i$
  for all $i<n$,
  \item\label{item:alpha_i_gets_in} $\alpha_i^M\in \vec{x}_{i+1}$ for all $i+1<n$,
  \item \label{item:Sigma_1-elem_closure}
 ($\Sigma_1$-elementarity closure) for all $i<n$, for the first $i$ 3-tuples
 (in some reasonable recursive ordering) of form
 $(\varrho,\vec{b},k)$
such that:
\begin{enumerate}[label=--]\item $\varrho=\varrho(\vec{v},u)$ is a
$\Sigma_1$ formula in the language of passive premice with free variables exactly $\vec{v},u$,
\item 
 $\vec{b}$ is a finite tuple with $\vec{b}\sub\vec{x}_0\conc\ldots\conc\vec{x}_{i-1}$,
 \item $\lh(\vec{v})=\lh(\vec{b})$,
 \item 
 $k<i$,
 \item $\vec{b}\sub M|\alpha_k^M$,
 and
 \item 
  $M|\alpha_k^M\sats\exists u\ \varrho(\vec{b},u)$,
  \end{enumerate}
there is some  $y\in( M|\alpha_k^M)\cap \vec{x}_{i}$
 such that $M|\alpha_k^M\sats\varrho(\vec{b},y)$,
  \item $q_i\in\BB_\delta^M$ for $i<n$,
 \item\label{item:meet_next_antichain}
 if $0<i<n$
 and there is a maximal antichain  
 $A$ of
  $\BB^M_{\delta^M}$ with $A\in\vec{x}_0\conc\ldots\conc\vec{x}_{i-1}$
  and $A\in M|\gamma_{i-1}^M$ such that for no $j<i$ is $q_j\in A$, then  letting $A_i$ be the first such one
  appearing in the list
  $\vec{x}_0\conc\ldots\conc\vec{x}_{i-1}$, we have $q_i\leq q_{i-1}$ and $q_i\in A$;
  otherwise $q_i=q_{i-1}$,
  \item\label{item:avoid_b_j} for no $j+1<n$
 is $q_j\leq b_j^M$.\footnote{Note that $b_j^M\leq b_{j+1}^M$ for all $j$, and on the other hand, we have $q_{j+1}\leq q_j$.}
 \end{enumerate}
 
 Note that in condition \ref{item:meet_next_antichain},
 since $A\in M|\gamma_{i-1}^M$,
 and by \S\ref{sec:antichains},
 the the fact that $A$ is a maximal antichain is a
 simple condition
 on the parameters $A,M|\delta^M$, and moreover, 
 there are cofinally many $\eta<\gamma_{i-1}^M$
 such that $M|\eta\preccurlyeq M|\delta^M$,
 so 
  it is just a simple fact about
 $\Th_{\rSigma_1}^{M|\alpha_{i-1}^M}(\vec{x}_0\conc\ldots\conc\vec{x}_{i-1})$.

 It is straightforward to see that if $\left<(\vec{x}_i,q_i)\right>_{i<\om}$ is an infinite branch through $S$ and $X=\bigcup_{i<\om}\vec{x}_i$
 and $g$ is the filter generated by $\{q_i\bigm|i<\om\}$,
 then letting $\bar{M}$ be the transitive collapse of $X$ and
 $\pi:\bar{M}\to M$ the uncollapse and $\bar{g}=\pi^{-1}``g$,
 then $\bar{M}=\Pgap(\bar{M}|\bar{\delta})$
 and $\bar{\delta}$ is Woodin in $\bar{M}$  and $\bar{g}$ is $(\bar{M},\BB^{\bar{M}}_{\delta^{\bar{M}}})$-generic
 and letting $w$ be the generic real, $\bar{M}[\bar{g}]\sats$``$\psi(w,x)$ holds in the generic $L(\RR)$'', which proves that $\J_{\beta}(\RR)\sats\psi(x,w)$
 (since we can iterate just at the Woodins strictly above $\delta^{\bar{M}}$ to produce $\J_\beta(\RR)$ as a derived model). 
 
 Now suppose that $S$ is instead wellfounded. We claim that $\J_\beta(\RR)\sats\all^\RR w\ \neg\psi(w,x)$''.
 The proof will also show that
 there is a ``$\all^\RR w\ \neg\psi(w,x)$-witness''
 which is a proper segment of $M|\om_1^M$ which projects to $\om$.

 For $n<\om$, an \emph{$n$-partial-$\Pgap$-ladder} is a pair $(N,P)$ such that $N$ is a premouse and $P\pins N$ and
  there are $\eta_0<\ldots<\eta_n<\OR^N$
 which are limit cardinals of $N$,
 $\eta_n^{++N}<\OR^N$
 and $\eta_n^{++N}$ is the largest cardinal of $N$,
 $N$ is $\Pgap$-closed,
 and there are $P_0\pins P_1\pins\ldots\pins P_n=P$
 such that $\eta_i<\OR^{P_i}$
 and $\rho_1^{P_i}=\rho_\om^{P_i}=\eta_i$
 and $P_i$ is a relevant premouse
 with $\eta_j=\gamma_j^{P_i}$ for $j\leq i$,
 and $(\eta_j,P_j)$
 is the $j$th $\Pgap$-rung of $P_i$ for $j\leq i$.
 (Let's change the definition of a rung to require that the $\Sigma_1$-theories of $P|\alpha_n^P$ in parameters $<\gamma_n^P$ and in $X^P$ 
 and $p_1^P$ are identical.)
 Note that $\eta_0,\ldots,\eta_n$
 and $P_0,\ldots,P_n$ are all determined by $P$.
  
  Note that given an $n$-partial-$\Gammagap$-ladder $(N,P)$,
  and $\eta_0,\ldots,\eta_n$, $P_0,\ldots,P_n$ the corresponding objects,
  for each $i\leq n+1$
  and $j\leq i+1$, we can define $S_j^{P_i}$ just as above, except that this includes only sequences $\sigma=((\vec{x}_0,q_0),\ldots,(\vec{x}_{k-1},q_{k-1}))$ of length $k\leq j$.
  For $k\leq j$,
  let $S_j^{P_i}\rest k=S_k^{P_i}$ (its restriction to sequences $\sigma$ of length $\leq k$).

  For $i<n$,
  note that $S_{i+1}^{P_i}$ is isomorphic to $S_{i+1}^{P_n}$, and letting $\pi_{in}:S_{i+1}^{P_i}\to S_{i+1}^{P_n}$ be the natural isomorphism,  $\pi_{in}$ does not move any ordinals $<\gamma_i^{P_i}=\gamma_i^{P_n}$. (For $j+1<i+1$,
  so $j<i$, we have $b_j^{P_i}=b_j^{P_n}$, so $P_i$ will agree with $P_n$ about whether $q_j\leq b_j$,
  ensuring agreement regarding condition \ref{item:avoid_b_j}).

  Let $N$ be a premouse
  with $x\in\RR^N$ 
  and let $S,\sigma\in N$
  and $n<\om$.
  We say that $(N,P,\Delta)$ is a \emph{$(\all^\RR\neg\psi(x),\sigma)$-prewitness} iff 
  $(N,P)$ is an $n$-partial-$\Gammagap$-ladder,
  as witnessed by $\eta_0,\ldots,\eta_n,P_0,\ldots,P_n=P$,
  $\sigma=\left<(\vec{x}_i,q_i)\right>_{i<n}\in S_n^{P_n}$
  and  $\Delta\in N|\eta_n^{++N}$ is a non-empty tree (set of finite sequences closed under initial segment, so $\emptyset\in\Delta$), whose elements take the form
  \[ \tau=((N_{n+1},P_{n+1},\sigma_{n+1},\Delta_{n+1}),\ldots,(N_{n+k},P_{n+k},\sigma_{n+k},\Delta_{n+k})), \]
  and moreover (writing $N_n=N$, $P_n=P$, $\sigma_n=\sigma$, $\Delta_n=\Delta$), for all $\tau\in\Delta$ as above,
  \begin{enumerate}
   \item $(N_{n+i+1},P_{n+i+1})$
   is an $(n+i+1)$-partial-$\Pgap$-ladder with $P_{n+i}=P_{n+i}^{P_{n+i+1}}$,
   \item $P_{n+i}\pins P_{n+i+1}\pins N_{n+i+1}\pins N_{n+i}$,
   \item $\rho_1^{N_{n+i+1}}=\rho_\om^{N_{n+i+1}}=(\eta_{n+i}^{(N_{n+i},P_{n+i})})^{+N_{n+i}}$,
   \item $\sigma_{n+i+1}\in S_{n+i+1}^{P_{n+i+1}}$
   and $\pi_{n+i}^{P_{n+i},P_{n+i+1}}(\sigma_{n+i})=\sigma_{n+i+1}\rest (n+i)$,
   \item $\Delta_{n+i+1}\in N_{n+i+1}|(\eta_{n+i+1}^{(N_{n+i+1},P_{n+i+1})})^{++N_{n+i+1}}$ (recall this is the largest cardinal proper segment of $N_{n+i+1}$),
   \item $\Delta_{n+i+1}=(\Delta_{n+i})_{\tau\rest (i+1)}$,
   \item\label{item:every_sigma'_handled} for every $\sigma'\in S_{n+i+1}^{P_{n+i}}$
   such that $\sigma_{n+i}\pins\sigma'$ (so $\lh(\sigma')=n+i+1$)
   there is $\tau'\in\Delta$
   such that $\tau'\rest i=\tau\rest i$
   and $\lh(\tau')=i+1$
   and letting $\tau'(i)=(N'',P'',\sigma'',\Delta'')$, then $\pi^{P_{n+i},P''}_{n+i+1}(\sigma')=\sigma''$.
  \end{enumerate}

  Suppose that $(N,P,\Delta)$ is a $(\all^\RR\neg\psi(x),\emptyset)$-prewitness and $N$ is iterable. 
  Then we claim that $\Jj_\beta(\RR)\sats\all^\RR u\ \neg\psi(x,u)$.
  Suppose not and let $w\in\RR$
  be such that $\Jj_\beta(\RR)\sats\psi(x,w)$. Fix a surjection $f:\om\to X^M$. We begin to iterate $N$ to make $w$ generic for the image of the extender algebra of $P$ at $\delta^P$, producing iteration tree $\Tt$,
  except that at the first stage
  $\beta<\lh(\Tt)$
  such that there are no $w$-bad extender algebra axioms indexed below $i^\Tt_{0\beta}(\gamma_0^P)$, we proceed as follows. We will specify some $\sigma\in S_1^{i^{\Tt}_{0\beta}}(P)$. Recall that elements of $S_1$ have length $1$,
  taking the form $\sigma=((\vec{x}_0,q_0))$.
  Note that conditions
  \ref{item:alpha_i_gets_in}, \ref{item:Sigma_1-elem_closure},
  \ref{item:meet_next_antichain}
  and \ref{item:avoid_b_j}
  are trivial for elements of length $1$.
  So  just set $
  q_0$ to be the ``$1$''
  of the extender algebra of $i^\Tt_{0\beta}(P)$ at $\delta^{i^\Tt_{0\beta}(P)}$)
  and $\vec{x}_0=(p_1^M,x_0,\delta^M,q_0,f(0),A)$
  where $f:\om\to X$ is the surjection fixed earlier, 
  and $A$ is some maximal antichain of $\BB^{P_\beta}_{\delta^{P_\beta}}$ with $A\in P_\beta||\gamma_0^{P_\beta}$;\footnote{There is actually no need to put $A$ in $\vec{x}_0$,
  but it does no harm. We only put it in for expository reasons;
  it forces us to deal with some maximal antichains earlier in the overall process than we would otherwise have to. There is also no need to put $f(0)$ in, but we will do something similar in general to arrange that we include all elements of $X$ along any infinite branch, mostly for convenience.}
  it is easy to see that $(\vec{x}_0,q_0)\in S_1^{i^{\Tt}_{0\beta}(P)}$. 
  
  Now since $(N,P,\Delta)$ is a $(\all^\RR\neg\psi(x),\emptyset)$-prewitness, so is $(M_\beta,P_\beta,\Delta_\beta)=(M^\Tt_\beta,i^\Tt_{0\beta}(P),i^\Tt_{0\beta}(\Delta))$. So we can apply property \ref{item:every_sigma'_handled}
  to $\sigma'=((\vec{x}_0,q_0))\in S_1^{P_\beta}=S_{n+i+1}^{P_{\beta,n+i}}$
  where $n=i=0$ and $P_{\beta,0}=P_\beta$,
  and $\sigma_{n+i}=\sigma_0=\emptyset$.
  So fix some $\tau'\in \Delta_\beta$ with $\lh(\tau')=1$ and letting $\tau'(0)=(N'',P'',\sigma'',\Delta'')$, we have $\pi^{P_\beta,P''}_1((\vec{x}_0,q_0))=\sigma''$.

  We choose $E^\Tt_\beta$
  as for the $w$-genericity iteration of $P''$ at $\delta^{P''}$,
  as long as this yields $\lh(E^\Tt_\beta)<\gamma_1^{P''}$. Suppose that this does indeed occur.
  Note that $P_\beta|\gamma_0^{P_\beta}=P''|\gamma_0^{P''}$ and $\gamma_0^{P_\beta}$ is a limit cardinal of $N_\beta=M^\Tt_\beta$, $P_\beta$, $N''$ and $P''$.
  And recall that there is no $w$-bad extender algebra axiom induced by an extender indexed in $\es^{P_\beta}|\gamma_0^{P_\beta}$,
  so if $E^\Tt_\beta$
  exists then $\gamma_0^{P_\beta}=\gamma_0^{P''}<\lh(E^\Tt_\beta)<\delta^{P''}$.
  Recall that also $\rho_\om^{P_\beta}=\gamma_0^{P_\beta}=\eta_0^{(N_\beta,P_\beta)}$,
  and $\rho_\om^{N''}=(\eta_0^{(N_\beta,P_\beta)})^{+N_\beta}=(\gamma_0^{P_\beta})^{+N_\beta}$ and $\eta_1^{(N'',P'')}=\gamma_1^{P''}$ is a limit cardinal of both $N''$ and $P''$ and $N''|\eta_1^{(N'',P'')}=P''|\gamma_1^{P''}$, and $\rho_\om^{P''}=\gamma_1^{P''}$, and since $\nu(E^\Tt_\beta)$
  is a $P''$-cardinal and $\gamma_0^{P''}<\lh(E^\Tt_\beta)<
  \gamma_1^{P''}$, if $\gamma\geq\beta$
  and $\pred^\Tt(\gamma+1)=\beta$
  then we will have:
  \begin{enumerate}[label=--]\item if $\crit(E^\Tt_\gamma)<\gamma_0^{P_\beta}$ then $M^{*\Tt}_{\gamma+1}=N_\beta$ (so $\gamma+1\in\dropset^\Tt$), and
   \item if $\crit(E^\Tt_\gamma)\geq\gamma_0^{P_\beta}$
   then $M^{*\Tt}_{\gamma+1}=N''$
   (so $\gamma+1\notin\dropset^\Tt$).
  \end{enumerate}
 Let $\gamma+1$ be such,
  and let $\gamma+1\leq^\Tt\vareps$ 
  with $(\gamma+1,\vareps]^\Tt\cap\dropset^\Tt=\emptyset$. If $\crit(E^\Tt_\gamma)<\gamma_0^{P_\beta}$, then we proceed making $w$ generic for the extender algebra of the image $P_\vareps$ of $P$ at $\delta^{P_\vareps}$, unless there are no $w$-bad axioms induced by extenders in $\es(P_\vareps||\gamma_0^{P_\vareps})$,
  in which case we will proceed like at stage $\beta$.
  If instead $\gamma_0^{P_\beta}\leq\crit(E^\Tt_\gamma)$,
  then we proceed by making $w$ generic for the extender algebra of $P_{\vareps}=i^{*\Tt}_{\gamma+1,\vareps}(P'')$ at $\delta^{P_\vareps}$,
  unless there are no $w$-bad extenders in $\es(P_\vareps||\gamma_1^{P_\vareps})$.
  
  Now suppose that $\gamma+1,\vareps$ are as above,
  with $\gamma_0^{P_\beta}\leq\crit(E^\Tt_\gamma)$,
  and there are indeed no $w$-bad axioms induced by extenders
  in $\es^{P_\vareps}||\gamma_1^{P_\vareps}$.
  We proceed at stage $\vareps$ in a fashion much like at stage $\beta$ earlier, but this time there is more to consider,
  most importantly regarding condition \ref{item:meet_next_antichain}
  and (sort of) condition \ref{item:avoid_b_j}.
  Consider condition \ref{item:meet_next_antichain}.
 If possible, we want to find an element $\sigma'_\vareps\in S_2^{P_\vareps}$
  which is consistent with making $w$ generic and such that $\sigma_\vareps\pins\sigma'_\vareps$, where $\sigma_\vareps=i^{*\Tt}_{\gamma+1,\vareps}(\sigma'')$.
  For this, we must consider conditions \ref{item:meet_next_antichain}
  and  \ref{item:avoid_b_j}.
  Since we are interested in $\lh(\sigma_\vareps)=2$,
  condition  \ref{item:meet_next_antichain}
  is non-trivial. Let $\sigma_\vareps=((\vec{x}_0,q_0))$. Suppose there is a maximal antichain $A$ of $\BB^{P_\vareps}_{\delta^{P_\vareps}}$
  with $A\in\vec{x}_0$ and $A\in P_\vareps|\gamma_0^{P_\vareps}$
   such that $q_0\notin A$,
   and let $A_0$ be the first one appearing in $\vec{x}_0$.\footnote{There could be some such antichain, since we put $A$ explicitly in $\vec{x}_0$ earlier.}
  Because $w$ satisfies all extender algebra axioms induced by extenders indexed in $P_\vareps||\gamma_0^{P_\vareps}=P_\beta||\gamma_0^{P_\beta}$
  (in fact satisfies all those induced by extenders indexed in $P_\vareps||\gamma_1^{P_\vareps}$), there is exactly one $q\in A$ such that, if $\varphi\in P_\vareps||\gamma_0^{P_\vareps}$
  is any formula such that $q=[\varphi]$, then $w\sats\varphi$.
  Set $q_1=q$. We have $q_1\leq q_0$ since $q_0=1$, so we have satisfied
  the requirements for condition \ref{item:meet_next_antichain}.
  (In more generality, when selecting $q_i$, we will have ``$w\sats q_{i-1}$'' in the appropriate sense, and we will set $q_i=q\wedge q_{i-1}$,
  which will be a non-zero condition since ``$w\sats q$''
  and ``$w\sats q_{i-1}$''.)

  Now consider condition \ref{item:avoid_b_j}.\footnote{Since $q_0=1$,
  this condition is actually rather trivial in the case of interest. But let's ignore this, for expository purposes.}
  Suppose first that $q_0\leq b_0^{P_\vareps}$. (Then clearly there is no way to find an appropriate $\sigma'_\vareps$.)
  In this case we will continue with $w$-genericity iteration over the image of $P_\vareps$;
  note that then if $E^\Tt_\vareps$ exists,
  we will have $\gamma_1^{P_\vareps}\leq\nu(E^\Tt_\vareps)$.
  Since $P_\vareps\pins N_\vareps$
  and $\rho_\om^{P_\vareps}=\gamma_1^{P_\vareps}=\eta_1^{(N_\vareps,P_\vareps)}$, if $\xi\geq\vareps$
  and $\pred^\Tt(\xi+1)=\vareps$
  then:
  \begin{enumerate}
   \item if $\crit(E^\Tt_\xi)<\gamma_1^{P_\vareps}$ then $M^{*\Tt}_{\xi+1}=N_\vareps$ (so $\xi+1\notin\dropset^\Tt$), and
   \item if $\crit(E^\Tt_\xi)\geq\gamma_1^{P_\vareps}$ then $M^{*\Tt}_{\xi+1}=P_\vareps$ (so $\xi+1\in\dropset^\Tt$).
  \end{enumerate}
  For $\xi+1$ as above and $\theta$ such that $\xi+1\leq^\Tt\theta$
  and $(\xi+1,\theta]^\Tt\cap\dropset^\Tt=\emptyset$,
  if $\crit(E^\Tt_\xi)<\gamma_1^{P_\vareps}$ then we proceed at stage $\theta$ like at stages $\vareps$ as before;
  if instead $\gamma_1^{P_\vareps}\leq\crit(E^\Tt_\xi)$
  then we simply continue with $w$-genericity iteration, without further restrictions.\footnote{Note here that we must continue to consider extender algebra axioms induced by extenders overlapping $\gamma_1^{P_\vareps}$.
  This is because the assumption
  is that $q_0\leq b_0^{P_\vareps}$, but this refers to the standardly defined extender algebra, without restrictions on critical points of extenders used to induce axioms.
  If we switch at this point to only inducing axioms with extenders $E$ with $\gamma_1^{P_\vareps}\leq\crit(E)$, then we will be changing the forcing under consideration,
  so that the fact that $q_0\leq b_0^{P_\vareps}$ will become irrelevant.}
  
Now suppose instead that $q_0\not\leq b_0^{P_\vareps}$.
Then we have satisfied condition \ref{item:avoid_b_j}, 
and it is now easy to find an appropriate $\sigma'_\vareps\in S_2^{P_\vareps}$; it just remains to specify $\vec{x}_1$ appropriately. Put $\vec{x}_1=(q_1,\alpha_0^{P|\vareps},f(1),y)$
where $y$ is the least object satisfying the requirements of condition \ref{item:Sigma_1-elem_closure},
with respect to the natural wellorder of $P_\vareps$ given by
combining the enumeration $f$ of $X$ with $P_\vareps$-(prewell)order of constructibility. It is easy to see that $((\vec{x}_0,q_0),(\vec{x}_1,q_1))\in S_2^{P_\vareps}$,
and note that also $w\sats\varphi$, whenever $\varphi\in P_\vareps|\gamma_1^{P_\vareps}$
and $P_\vareps\sats$``$q_1=[\varphi]$''.

Since $(N_\vareps,P_\vareps,\Delta_\vareps)$
is a $(\all^\RR\neg\psi(x),\sigma_\vareps)$-prewitness,
applying property \ref{item:every_sigma'_handled}
to $\sigma'_\vareps\in S_2^{P_\vareps}$,
we can fix $\tau'\in\Delta_\vareps$
such that $\lh(\tau')=1$
and letting $\tau(0)=(N'',P'',\sigma'',\Delta'')$,
we have $\pi^{P_\vareps,P''}_{2}(\sigma'_\vareps)=\sigma''$.

The process described so far generalizes in the obvious manner to all stages $\theta$ of $\Tt$.
We have two kinds of stages $\theta<\lh(\Tt)$: $\theta$ can be \emph{$w$-consistent},
or \emph{$w$-inconsistent}.
Given $\theta<\lh(\Tt)$,
the set $\{\vareps\leq^\Tt\theta\bigm|\vareps\text{ is }w\text{-consistent}\}$ will be a closed initial segment of $[0,\theta]^\Tt$, including $\vareps=0$.
At each stage $w$-consistent stage $\theta<\lh(\Tt)$,
 we will  have $N_\theta=M^\Tt_\theta$,  $P_\theta\pins N_\theta$,
$n_\theta<\om$, such that
$(N_\theta,P_\theta)$
is an $n_\theta$-partial-$\Pgap$-ladder, and have
$\sigma_\theta\in S_{n_\theta}^{P_\theta}$
and $\Delta_\theta\in N_\theta$
such that $(N_\theta,P_\theta,\Delta_\theta)$ is a $(\all^\RR\neg\psi(x),\sigma_\theta)$-prewitness.
We will have relationships
and drops along $[0,\theta]^\Tt$ essentially like in the cases described above.
This is maintained by induction
using crucially the fact that for  any $\theta$, $(0,\theta]^\Tt$
drops only finitely often,
and if $\theta_0<^\Tt\theta_1$ 
then $n_{\theta_0}\leq n_{\theta_1}$, with $n_{\theta_0}<n_{\theta_1}$ iff $(\theta_0,\theta_1]^\Tt\cap\dropset^\Tt\neq\emptyset$. 
The $w$-inconsistent stages
$\vareps$ are just those such that for some $\theta<^\Tt\vareps$,
$\theta$ is $w$-consistent,
and at stage $\theta$,
we want to extend $\sigma_\theta$
to some $\sigma'_\theta$
with $\lh(\sigma'_\theta)=n_\theta+1$,
but cannot, because we violate condition \ref{item:avoid_b_j}
and $\crit(E^\Tt_\xi)\geq\gamma_{n_\theta}^{P_\theta}$,
where $\xi+1=\min((\theta,\vareps]^\Tt)$.
That is, regarding the violation of condition  \ref{item:avoid_b_j},  there is some $j+1<n_\theta+1$ (so $j< n_\theta$, so $0<n_\theta$) such that $q_{\theta j}\leq b^{P_\theta}_j$, where $\sigma_\theta=((\vec{x}_{\theta0},q_{\theta0}),\ldots,(\vec{x}_{\theta,(n_\theta-1)},q_{\theta,(n_\theta-1)}))$. We will have here that $w\sats\varphi$,
whenever $\varphi\in P_\theta|\gamma_{n_\theta}^{P_\theta}$
and $P_\theta\sats$``$q_{\theta j}=[\varphi]$''. In this case we will also have that $(\xi+1,\vareps]^\Tt\cap\dropset^{\Tt}=\emptyset$,
and we will be iterating to make $w$ generic for the extender algebra of $M^{\Tt}_\vareps$
at $\delta^{M^{\Tt}_\vareps}$
(and likewise at stage $\theta$,
for the extender algebra of $P_\vareps$ at $\delta^{P_\vareps}$). 

Now the construction of $\Tt$ must terminate in countably many steps.
For suppose we reach a construction of length $\om_1+1$.
Then there are only finitely many drops along $[0,\om_1)^\Tt$.
But then the usual argument for termination of genericity iteration yields a contradiction.

Let $\vareps+1=\lh(\Tt)$.
Suppose  $\vareps$ is $w$-inconsistent, and adopt the notation with $\theta,\xi$, etc, as above.
Since the process terminates,
$w$ is generic over $M^\Tt_\vareps$ for $\BB^{M^\Tt_\vareps}_{\delta^{M^\Tt_\vareps}}$.
But $w\sats\varphi$
where $\varphi\in P_\theta|\gamma_{n_\theta}^{P_\theta}$
and $P_\theta\sats$``$q_{\theta_j}=[\varphi]$''.
Since $i^{*\Tt}_{\xi+1,\vareps}:P_\theta\to M^\Tt_\vareps$
is sufficiently elementary
and
$\gamma_{n_\theta}^{P_\theta}\leq\crit(i^{*\Tt}_{\xi+1,\vareps})$, we also have $M^\Tt_\theta\sats$``$q_{\theta_j}=[\varphi]$''.
So $q_{\theta_j}\in g_w$,
where $g_w$ is the $M^\Tt_\theta$-generic filter determined by $w$.
But $q_{\theta j}\leq b_j^{P_\theta}$, and so $q_{\theta j}\leq b_j^{M^\Tt_\vareps}$,
so $b_j^{M^\Tt_\vareps}\in g_w$,
which gives that $\Jj_\beta(\RR)\sats\neg\psi(x,w)$,
a contradiction.

So $\vareps$ is $w$-consistent.
So $(N_\vareps,P_\vareps,\sigma_\vareps,\Delta_\vareps)$
are defined, etc.
Because the process terminates at this stage, there are no $w$-bad extender algebra axioms induced by extenders indexed in $\es^{P_\vareps|\gamma_{n_\vareps}^{P_\vareps}}$.
So 
we try to define $\sigma_\vareps'$. If this attempt violates condition \ref{item:avoid_b_j},
then we proceed at this stage
with looking for $E^\Tt_\vareps$
within $P_\vareps$.
But since the process terminates at this stage, it is just that $w$ is already generic over $P_\vareps$. But this now yields the same contradiction as in the preceding paragraph.
So we must successfully define $\sigma'_\vareps$.
But there is no $w$-bad extender found in $P_{\vareps,1}$ indexed below $\gamma_{n_\vareps+1}^{P_{\vareps,1}}$,
so we immediately attempt to extend $\sigma'_\vareps$
one step further to $\sigma''_\vareps$. This must satisfy condition \ref{item:avoid_b_j}, since otherwise we get a contradiction like before.
So we get $\sigma''_\vareps$.
Etc. But this produces $N_{\vareps,n}$ for all $n>0$,
with $N_{\vareps,n+1}\pins N_{\vareps,n}$, a contradiction.

\bibliographystyle{plain}
\bibliography{../bibliography/bibliography}
\end{document}